\newtheorem{thm}{Theorem}[section]
\newtheorem{mainthm}{Theorem}
\newtheorem{lemma}[thm]{Lemma}
\newtheorem{cor}[thm]{Corollary}
\newtheorem{claim}{Claim}[thm]
\newtheorem{prop}[thm]{Proposition}
\newtheorem{obs}[thm]{Observation}
\newtheorem{fact}[thm]{Fact}
\theoremstyle{definition}
\newtheorem{defn}[thm]{Definition}
\theoremstyle{remark}
\newtheorem{remark}[thm]{Remark}
\newtheorem{conv}[thm]{Convention}
\renewcommand{\mid}{\mathrel{|}\allowbreak}
\renewcommand{\restriction}{\mathbin\upharpoonright}
\DeclareMathOperator{\add}{Add}
\DeclareMathOperator{\acc}{acc}
\DeclareMathOperator{\nacc}{nacc}
\DeclareMathOperator{\ch}{CH}
\DeclareMathOperator{\reg}{Reg}
\DeclareMathOperator{\cf}{cf}
\DeclareMathOperator{\dom}{dom}
\DeclareMathOperator{\im}{Im}
\DeclareMathOperator{\otp}{otp}
\DeclareMathOperator{\h}{ht}
\DeclareMathOperator{\p}{P}
\newcommand\s{\subseteq}
\newcommand\sq{\sqsubseteq}
\newcommand*\axiomfont[1]{\textsf{\textup{#1}}}
\newcommand\gch{\axiomfont{GCH}}
\newcommand\sh{\axiomfont{SH}}
\newcommand\zfc{\axiomfont{ZFC}}
\def\sd{\framebox[3.0mm][l]{$\diamondsuit$}\hspace{0.5mm}{}}
\def\br{\blacktriangleright}
\title{Full Souslin trees at small cardinals}
\author{Assaf Rinot}
\address{Department of Mathematics, Bar-Ilan University, Ramat-Gan 52900, Israel.}
\urladdr{http://www.assafrinot.com}
\author{Shira Yadai}
\address{Department of Mathematics, Bar-Ilan University, Ramat-Gan 52900, Israel.}
\email{greenss@biu.ac.il}
\author{Zhixing You}
\address{Department of Mathematics, Bar-Ilan University, Ramat-Gan 52900, Israel.}
\email{zhixingy121@gmail.com}
\keywords{Full Souslin tree, product of Souslin trees, subtle cardinal, diamond for trees, proxy principle}
\begin{document}
\begin{abstract} A $\kappa$-tree is \emph{full} if each of its limit levels omits no more than one potential branch.
Kunen asked whether a full $\kappa$-Souslin tree may consistently exist.
Shelah gave an affirmative answer of height a strong limit Mahlo cardinal.
Here, it is shown that these trees may consistently exist at small cardinals.
Indeed, there can be $\aleph_3$ many full $\aleph_2$-trees such that the product of any countably many of them is an $\aleph_2$-Souslin tree.

\end{abstract}
\date{A preprint as of February 29, 2023. For updates, visit \textsf{http://p.assafrinot.com/62}.}
\maketitle

\section{Introduction}
Recall that the real line is the unique separable, dense linear ordering with no endpoints in which
every bounded set has a least upper bound.
A problem posed by Mikhail Souslin more than a century ago and published in the very first volume of \emph{Fundamenta Mathematicae} \cite{Souslin} asks whether
the term \emph{separable} in the above characterization may be weakened to \emph{ccc}.\footnote{A linear order is \emph{separable}
if it has a countable dense subset. It is \emph{ccc} (short for \emph{countable chain condition}) if every pairwise disjoint family of open intervals is countable.}
The affirmative answer to Souslin's problem --- equivalently,
that every linearly ordered topological space satisfying the countable chain condition is separable ---
is known as Souslin's Hypothesis, and abbreviated \sh.

In the early 1930's, in the course of attempting to settle Souslin's problem,
Kurepa discovered that the problem can be reformulated in terms of transfinite trees
and thus
``eliminated topological considerations from Souslin's Problem and
reduced it to a problem of combinatorial set theory''~\cite[p.~3]{MR2882649}.
Kurepa's finding asserts that $\sh$ is equivalent to a
Ramsey-theoretic statement concerning transfinite trees: Every uncountable tree
must admit an uncountable chain or an uncountable antichain.
Curiously, around the same time, Sierpi{\'n}ski \cite{MR1556708} proved that there does exist an uncountable
\emph{partial order} without uncountable chains or antichains. So, it is the requirement of being a tree that makes this problem difficult.

What are transfinite trees anyway?
Intuitively, the class of these trees should be understood as a two-step generalization of the class of finite linear orders, where the first step of generalization is the class of well-ordered sets.
Precisely, for an infinite cardinal $\kappa$, a partially ordered set $\mathbf T=(T,{<_T})$ is a \emph{$\kappa$-tree} iff the following two requirements hold:
\begin{enumerate}
\item For every node $x\in T$, the set $x_\downarrow:=\{ y\in T\mid y<_T x\}$ is well-ordered by $<_T$.
Hereafter, write $\h(x):=\otp(x_\downarrow,<_T)$ for the \emph{height} of  $x$;
\item For every $\alpha<\kappa$, the $\alpha^{\text{th}}$-level of the tree, $T_\alpha:=\{ x\in T\mid \h(x)=\alpha\}$, is nonempty and has size less than $\kappa$.
The level $T_\kappa$ is empty.
\end{enumerate}

For an ordinal $\alpha$, a subset $B\s T$ is an \emph{$\alpha$-branch} iff $(B,<_T)$ is linearly ordered and $\{\h(x)\mid x\in B\}=\alpha$.
With this language, K{\H o}nig's infinity lemma \cite{konig1927schlussweise} can be restated as asserting that every $\aleph_0$-tree admits an $\aleph_0$-branch,
and Kurepa's theorem \cite{kurepa1935ensembles} is that  $\sh$ can be restated as asserting that every $\aleph_1$-tree with no uncountable antichains
must admit an $\aleph_1$-branch. A counterexample tree is called an \emph{$\aleph_1$-Souslin tree}.

Souslin's problem was eventually resolved at the end of the 1960's \cite{jech1967non,tennenbaum1968souslin,jensen1968souslin,MR0294139}
with the finding that $\sh$ is independent of the usual axioms of set theory ($\zfc$).
Amazingly enough, the resolution of this single problem led to profound discoveries in set theory:
the notions of Aronszajn, Kurepa, and Souslin trees \cite{kurepa1935ensembles},
forcing axioms and the method of iterated forcing \cite{MR0294139},
the combinatorial principles of G\"odel's universe of sets \cite{jen72},
and the theory of iteration without adding reals \cite{MR384542}.

Even before $\aleph_1$-Souslin trees were known to consistently exist,
Rudin \cite{Rudin_souslin_line_dowker_space} famously used them to give a (conditional) construction of
a normal topological space whose product with the unit interval is not normal (a \emph{Dowker space} \cite{Dowker_C.H.}).
Ever since, the study of $\kappa$-Souslin trees (i.e., $\kappa$-trees with no $\kappa$-sized chains or antichains) and their applications
remained an active and fruitful vein of research in combinatorial set theory, general topology and functional analysis.
A very recent application of (homogeneous) Souslin trees to infinite group theory may be found in \cite{paper60}.
Meanwhile, fundamental questions on the existence of $\aleph_2$-Souslin trees remain open to this date (see the table on  \cite[p.~439]{paper31} for a summary of the current state).

\smallskip

A great deal of information about a $\kappa$-tree $\mathbf T$ is encoded
into the collection $\mathcal V(\mathbf T)$ of its vanishing branches,
where an $\alpha$-branch is said to be \emph{vanishing} iff it has no upper bound in $\mathbf T$.
In fact, one part of the proof of Kurepa's theorem mentioned earlier goes through showing that
if $\mathbf T$ is an $\aleph_1$-Souslin tree, then
$\mathcal V(\mathbf T)$
admits a natural lexicographic-like ordering that makes it into a ccc non-separable linear order.

In the early 1990's, Kunen asked whether a $\kappa$-Souslin tree may be full,
where a $\kappa$-tree is \emph{full} iff
it admits no more than one vanishing $\alpha$-branch for every limit ordinal $\alpha<\kappa$.
While it is hard to see how a full $\kappa$-tree could manage to evade having a $\kappa$-sized chain or an antichain,
Kunen's question was answered in the affirmative. Specifically, in \cite{Sh:624}, Shelah constructed a full $\kappa$-Souslin tree
for some `high up' cardinal $\kappa$ (a strong limit Mahlo cardinal, to be precise),  using the method of forcing.

In this paper, we shall give forcing-free constructions of full $\kappa$-Souslin trees.
Our combinatorial approach is based on the proxy principle $\boxtimes^-(\kappa)$ from \cite{paper22}
and a new prediction principle for trees (see $\S3$ below) which provably holds at subtle cardinals. In particular, we obtain the following.

\begin{mainthm}\label{thma} Suppose that $\kappa$ is a subtle cardinal and that $\boxtimes^-(\kappa)$ holds.
Then there exists a full $\kappa$-Souslin tree.
\end{mainthm}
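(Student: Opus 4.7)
The plan is to build $T$ by transfinite recursion of length $\kappa$, realizing it as a downward-closed subtree of ${}^{<\kappa}2$ under end-extension, with $T_\alpha$ specified at stage $\alpha<\kappa$. Two ingredients will guide the construction: the coherent $C$-sequence $\vec C=\langle C_\alpha\mid \alpha\in\acc(\kappa)\rangle$ supplied by $\boxtimes^-(\kappa)$, and the new prediction principle of \S3 which, by subtlety of $\kappa$, at a large set of limit $\alpha$ forecasts combinatorial data attached to $T\restriction\alpha$: potential maximal antichains, candidate $\alpha$-branches, and so on.

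Successor stages are routine: split each $x\in T_\alpha$ into two immediate successors. The crux is at limit $\alpha$. Following the microscopic recipe familiar from consumers of the proxy principle, use $C_\alpha$ to trace, for each $x\in T\restriction\alpha$, a canonical cofinal branch $b^x_\alpha$ above $x$ and designate its union as a new node of $T_\alpha$; the coherence of $\vec C$ ensures these branches nest consistently as $\alpha$ varies. When the prediction principle forecasts a maximal antichain $A\s T\restriction\alpha$, adjust the choice of each $b^x_\alpha$ so that $b^x_\alpha\cap A\neq\emptyset$, thereby sealing $A$; the ensuing density argument will then yield the Souslin property.

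For fullness, the construction must further guarantee that at every limit $\alpha<\kappa$, all but at most one $\alpha$-branch of $T\restriction\alpha$ is extended into $T_\alpha$. Since $|T_\alpha|<\kappa$, this secretly also forces $T\restriction\alpha$ to carry fewer than $\kappa$-many $\alpha$-branches to begin with. The plan is for the prediction principle of \S3 to anticipate, at every such $\alpha$, the \emph{entire} collection of $\alpha$-branches through $T\restriction\alpha$, flag at most one as the designated vanishing branch, and let the construction place an upper bound in $T_\alpha$ for each of the remaining branches.

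The main obstacle will be orchestrating all three demands --- nondegeneracy of the tree, antichain sealing, and fullness --- at a single limit $\alpha$ without conflict. The branches $b^x_\alpha$ dictated by $C_\alpha$ must be compatible with the branches predicted for fullness, and the antichain-sealing adjustment must not destroy either property. The bookkeeping needed to reconcile these constraints, and in particular to verify that no spurious $\kappa$-sized antichain or $\kappa$-branch sneaks in and that $|T_\alpha|$ remains below $\kappa$, is where subtlety and the reflection it engenders do their essential work; without a prediction principle strong enough to enumerate the $\alpha$-branches of $T\restriction\alpha$, the fullness clause would be beyond reach for a construction confined to a small $\kappa$.
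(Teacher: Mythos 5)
There is a genuine gap, and it sits exactly at the point your proposal defers to ``bookkeeping'': how the Souslin argument survives the extra branches that fullness forces you to add. In the paper's construction, fullness is essentially free: subtlety implies $\kappa$ is strongly inaccessible, so $\mathcal B(T\restriction\alpha)$ has size $<\kappa$ and one can simply declare $T_\alpha$ to be \emph{all} cofinal branches through $T\restriction\alpha$, with at most one exception. No prediction principle that ``anticipates the entire collection of $\alpha$-branches'' is needed for fullness, and indeed no such principle is established (predicting at stage $\alpha$ the full set of $\alpha$-branches of the tree being built is not what the \S3 principle does). The real problem, which your plan does not resolve, is the opposite one: once you place upper bounds for \emph{all} branches at limit levels, the level $T_\alpha$ contains many nodes that were never steered through the guessed maximal antichain $A$, so the standard conclusion that every node of $T_\alpha$ extends an element of $A\cap(T\restriction\alpha)$ --- hence $A\s T\restriction\alpha$ --- collapses. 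Adjusting the canonical branches $b^x_\alpha$ to meet $A$ only controls the canonical nodes, not the surplus ones added for fullness.

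The paper's new idea is a self-referential guessing device, $\diamondsuit^*_S(\kappa\textup{-trees})$, provable from subtlety: a single sequence $\langle f_\beta\mid\beta<\kappa\rangle$, fixed in advance, such that for club-many $\alpha$ in a stationary $S\s E^\kappa_{>\omega}$ (in fact $S\s\reg(\kappa)$), every $f\in T_\alpha$ satisfies that $\{\beta<\alpha\mid f\restriction\beta=f_\beta\}$ is stationary in $\alpha$. The construction omits the guessed branch $f_\beta$ at level $\beta$ \emph{unless} it coincides with a canonical branch $\mathbf b^\beta_x$; consequently, at the reflection points $\alpha\in S$, any $\rho\in T_\alpha$ restricts to a guessed-and-surviving (hence canonical) node at stationarily many $\beta\in\acc(C_\alpha)$, and Fodor's lemma together with the $\sq^*$-coherence of the proxy $C$-sequence upgrades this to $\rho=\mathbf b^\alpha_x$ for a single $x$. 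Thus at the levels where the proxy sequence hits the antichain-guessing set, $T_\alpha$ consists \emph{only} of canonical branches despite the tree being full, and sealing goes through. Your proposal contains neither this mechanism nor a substitute for it (nor the auxiliary $\diamondsuit(H_\kappa)$-style guessing of maximal antichains along $\nacc(C_\alpha)$, and the passage from $\boxtimes^-(\kappa)$ to $\p^-(\kappa,2,{\sq^*},1,\{S\})$ for the relevant stationary $S$), so as written the fullness and Souslin requirements remain in unresolved tension.
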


The definition of full trees is quite illusive, and it is tempting to think that full Souslin trees can only exist at the level of strong limit cardinals.
Personally, we tried to prove that this must be the case, but only got as far as Observation~\ref{obs22}(2) below.
The second result of this paper proves that our initial intuition was wrong and shows that a full $\kappa$-Souslin tree may exist for $\kappa$ as low as $\aleph_n$ for some positive integer $n$.
\begin{mainthm}\label{thmb}
Suppose that $\lambda$ is the successor of an uncountable cardinal
and that $\square_\lambda$ and $\gch$ both hold. Then there exists a full $\lambda^+$-Souslin tree.
\end{mainthm}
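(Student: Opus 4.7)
My plan is to derive Theorem~\ref{thmb} from the machinery developed for Theorem~\ref{thma}, with the hypothesis $\square_\lambda+\gch$ playing the role that subtlety plays there. First, I would verify that $\square_\lambda+\gch$ implies $\boxtimes^-(\lambda^+)$. This is an instance of the fundamental derivations established in~\cite{paper22}: the coherent $C$-sequence given by $\square_\lambda$ supplies the club component of the proxy principle, while $\diamondsuit_{\lambda^+}$ — which follows from $\gch$ at the successor cardinal $\lambda^+$ — supplies the guessing component. Next, I would show that the prediction principle for trees to be introduced in $\S 3$, which under Theorem~\ref{thma} is obtained from subtlety, already follows at $\lambda^+$ from $\diamondsuit_{\lambda^+}$ threaded along a $\square_\lambda$-sequence: any target object in $H(\lambda^{++})$ is coded by a subset of $\lambda^+$, and a standard diamond sequence can be converted into one that anticipates \emph{branches} through a putative tree rather than arbitrary subsets.

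With both principles in hand, I would run a streamlined variant of the construction used for Theorem~\ref{thma}. The tree is built by recursion on levels; successor levels are routine. At a limit level $\alpha$, we use the club $C_\alpha$ given by the $\square$-sequence to identify, for each node $x$ of height below $\alpha$, the $\alpha$-branches through the cone above $x$ that are \emph{coherent} with $C_\alpha$; there are at most $\lambda$ many such branches. Each of them is realized by a node placed in $T_\alpha$, with at most one allowable exception, which is selected so as to seal a potential maximal antichain of $\mathbf{T}\restriction\alpha$ predicted by the prediction principle.

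The main obstacle is the fullness requirement: we must ensure that \emph{every} $\alpha$-branch through $T_{<\alpha}$, and not merely the $C_\alpha$-coherent ones, is realized in $T_\alpha$ up to at most a single exception. For $\alpha$ with $\cf(\alpha)=\lambda$ there are a priori up to $\lambda^+$ many $\alpha$-branches, so a naive counting argument will not suffice. The strategy is to enforce coherence from below: the recursion is arranged so that any $\alpha$-branch must agree cofinally often with one of the $C_\alpha$-coherent approximations, using the strong coherence of $\square_\lambda$ together with the antichain-sealing at lower stages to preclude incompatible threads. Reconciling this branch-realization step with the antichain-sealing step, while keeping the unique permissible vanishing branch consistent across all limit levels, is the technical heart of the argument, and is where the power of the proxy principle $\boxtimes^-(\lambda^+)$ is most critically exploited.
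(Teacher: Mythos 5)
There is a genuine gap, and it sits exactly at the step you describe as ``a standard diamond sequence can be converted into one that anticipates branches.'' The prediction principle of \S\ref{secdia} demands, for club-many levels $\alpha$ in a stationary $S$, that \emph{every} node of $T_\alpha$ be guessed \emph{stationarily often below $\alpha$}; this simultaneous, reflected guessing does not follow from $\diamondsuit(\lambda^+)$ together with a square-like hypothesis by any routine conversion (compare Proposition~\ref{friedman} and Remark~\ref{jen}(5), where $\gch$, $\diamondsuit(\aleph_1)$ and $\square(\aleph_2)$ coexist with the failure of even a full $\aleph_2$-Aronszajn tree). The paper's actual source of prediction is a diamond at the \emph{predecessor} cardinal: since $\lambda$ is the successor of an uncountable cardinal, $\gch$ yields $\diamondsuit(\lambda)$ by Shelah's theorem \cite{Sh:922} --- note that your outline never uses the hypothesis that $\lambda$ is a successor of an uncountable cardinal, which is precisely what this step needs --- and that $\diamondsuit(\lambda)$-sequence is threaded along the $\square_\lambda$-clubs, as in Proposition~\ref{cluba} and Ingredient~(iii) of the proof of Theorem~\ref{thm51}, to manufacture, for the tree being built, a predicted branch $f^{\eta\restriction\beta}$ at each relevant $\beta$.

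Even granting the tree-diamond at $\kappa=\lambda^+$, one cannot ``run a streamlined variant of the construction used for Theorem~\ref{thma}'': that construction places all of $\mathcal B(T\restriction\alpha)$ (minus at most one branch) into level $\alpha$ and relies on strong inaccessibility to keep levels of size $<\kappa$, whereas at levels of cofinality $\lambda$ one must know \emph{during the recursion} that there are at most $\lambda$ many $\alpha$-branches. The principle $\diamondsuit^*_S(\kappa\textup{-trees})$ only guarantees guessing on a club chosen after the completed $\kappa$-tree is in hand, so it cannot be invoked mid-construction to provide this bound; this is exactly why Theorem~\ref{thm51} builds the prediction into the recursion so that the guessing of Claim~\ref{claim512}(b) holds at every $\alpha\in E^\kappa_\lambda$ for the tree under construction. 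Moreover, the mechanism that then precludes non-canonical branches is not ``the antichain-sealing at lower stages'' (sealing gives Souslinness, not the branch bound) but the fullness/omission step itself: for any prospective branch $\rho$, at stationarily many $\beta\in\acc(C_\alpha)$ (stationarity, not mere cofinality, is needed for the Fodor stabilization) the restriction $\rho\restriction\beta$ equals the predicted $f^{\eta\restriction\beta}$, and since it survived into level $\beta$ rather than being the one omitted branch there, it must be a canonical limit $\mathbf{b}^{\eta\restriction\beta}_x$; coherence of the matrices $\mathbb B^\beta$ along the $\sq$-coherent $C$-sequence then yields $\rho=\mathbf{b}^\eta_x$, whence the level has size at most $\lambda$. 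Your proposal names the right obstacle but supplies neither the correct source of prediction nor the correct mechanism resolving it.
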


In the other direction, we shall also show that
the existence of a full $\lambda^+$-Souslin tree is compatible with $\lambda$ being a supercompact cardinal.

\medskip

Finally, the definition of full $\kappa$-trees may suggest that if they exist, then they are unique (say, any two are isomorphic on a club).
This sounds even more plausible in the context of splitting binary $\kappa$-trees, i.e., trees $\mathbf T=(T,{\s})$ where $T$ is a downward-closed subset of ${}^{<\kappa}2$ such that $t{}^\smallfrown\langle0\rangle,t{}^\smallfrown\langle1\rangle\in T$ for every $t\in T$.
Nonetheless, our third main finding shows that this is not the case either.
By Proposition~\ref{weakpr} below, if $\mathbf S,\mathbf T$ are two $\kappa$-trees whose product is $\kappa$-Souslin,
then there is no weak embedding from $\mathbf S$ to $\mathbf T$. Therefore, the splitting, binary full trees given by the following theorem are pairwise distinct in a very strong sense.

\begin{mainthm}\label{thmc} Suppose that $\lambda$ is a regular uncountable cardinal
such that $\diamondsuit(\lambda)$ and $\sd_\lambda$ both hold.
Then there exists a family $\langle\mathbf T^i\mid i<2^{\lambda^{+}}\rangle$ of splitting, binary, full $\lambda^+$-Souslin trees
such that for every nonempty $I\in[2^{\lambda^{+}}]^{<\lambda}$, the product $\bigotimes_{i\in I}\mathbf T^i$ is again $\lambda^+$-Souslin.
\end{mainthm}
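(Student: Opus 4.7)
The plan is to build all $2^{\lambda^+}$ trees $\mathbf T^i$ simultaneously via a single transfinite recursion of length $\lambda^{+}$, each $\mathbf T^i$ realized as a splitting, binary, downward-closed subtree of ${}^{<\lambda^{+}}2$. Identifying $2^{\lambda^+}$ with ${}^{\lambda^+}2$, the tree $\mathbf T^i$ will be parameterized by a function $i:\lambda^+\to 2$ whose $\alpha^{\text{th}}$ bit will control, at limit stage $\alpha$, which of two candidate cofinal branches of $\mathbf T^i\restriction\alpha$ is the one omitted at level $\alpha$. Successor stages are fixed by binary splitting, so all combinatorial work takes place at limits, where we must choose $T^i_\alpha\s {}^\alpha 2$ for every $i$ at once.

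At a limit stage $\alpha<\lambda^+$, three demands compete. First, \emph{fullness} forces at most one cofinal branch of $\mathbf T^i\restriction\alpha$ to lack an upper bound in $T^i_\alpha$. Second, \emph{Souslinness of the product} requires that every maximal antichain in $\bigotimes_{i\in I}\mathbf T^i$ predicted at stage $\alpha$ be sealed below $\alpha$, uniformly over all nonempty $I\in[2^{\lambda^+}]^{<\lambda}$. Third, the trees must be pairwise non-weakly-embeddable so that Proposition~\ref{weakpr} applies and the product claim is nontrivial. The guessing tools come from the hypotheses: $\diamondsuit(\lambda)$ yields an ordinary guessing sequence, while the stronger diamond-for-trees principle $\sd_\lambda$ supplies enough predictive power to foresee maximal antichains in arbitrary $<\lambda$-sized products of the partial trees built so far. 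Fullness is managed by fixing, at each $\alpha$, a canonical enumeration of the cofinal branches of $\mathbf T^i\restriction\alpha$ of length at most $|\alpha|$ and extending every branch save one designated exception, whose identity is determined by $i(\alpha)$ together with the prediction data.

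The central obstacle is the tension between fullness and antichain sealing: the textbook way to seal an antichain is to refuse to extend branches that miss it, but fullness permits only a single such refusal per tree per limit level. The resolution, following the strategy behind Theorem~\ref{thmb}, is to arrange --- through careful shaping of earlier levels --- that the prediction of $\sd_\lambda$ at stage $\alpha$ is an antichain through which \emph{every} cofinal branch of the relevant product $\bigotimes_{i\in I}\mathbf T^i\restriction\alpha$ must pass, with at most the distinguished coordinate-wise exceptional branch escaping; extending all non-exceptional branches then automatically seals the antichain. Here one uses that a cofinal branch of the product is a coordinate-wise tuple of branches, and fullness in each coordinate keeps the vanishing product-branches in a controllably small set whenever $|I|<\lambda$. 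The delicate points I expect to dominate the argument are (i) aligning the single exception permitted by fullness with the antichain predicted by $\sd_\lambda$, and (ii) verifying that the bit-by-bit parameterization yields $2^{\lambda^+}$ trees that not only differ but admit no mutual weak embedding --- a strong form of distinctness which, together with Proposition~\ref{weakpr}, is precisely what is needed to guarantee that every $<\lambda$-sized product is itself a $\lambda^+$-Souslin tree.
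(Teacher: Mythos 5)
You have correctly located the architecture (a single recursion building binary streamlined trees indexed by ${}^{\lambda^+}2$, with all work at limit levels) and the central tension: fullness leaves you at most one refusal per tree per limit level, so antichains cannot be sealed by pruning. But the proposal misreads what the hypotheses provide and, more importantly, leaves the actual reconciliation of fullness with sealing as an unexplained ``careful shaping of earlier levels'' --- and that shaping is the entire content of the theorem. In the paper, $\sd_\lambda$ is not a diamond-for-trees principle predicting antichains in products; it is a square-with-diamond principle \emph{at $\lambda$} whose role is to yield $\square_\lambda$ (hence $\square^B_\lambda$), $\ch_\lambda$ (hence $\diamondsuit(\lambda^+)$), and, via \cite[Theorem~3.6]{paper22}, the proxy principle $\p^-(\lambda^+,2,{\sq},\lambda^+,\{E^{\lambda^+}_\lambda\})$. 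Antichains in the $<\lambda$-sized products are predicted by $\diamondsuit(H_{\lambda^+})$, and the prediction is made to matter at the right places by the coherent $C$-sequence coming from the proxy principle. The hypothesis $\diamondsuit(\lambda)$ (at $\lambda$, not $\lambda^+$) is used, together with the $\square^B_\lambda$-sequence as in Proposition~\ref{cluba}, to manufacture inside the recursion a relativized tree-diamond branch $f^\eta$ at each level of the tree being built; this guessed branch is the unique candidate for omission.

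The missing mechanism is the following, and your stage-$\alpha$ picture cannot replace it: since fullness pins the level $T^\eta_\alpha$ down to ``all branches except possibly one,'' nothing chosen \emph{at} $\alpha$ can force every node of the product's level $\alpha$ above the predicted antichain. Instead one builds, by recursion along $C_\alpha$, a coherent matrix of canonical branches which, at nonaccumulation points $\beta$ of $C_\alpha$, are routed \emph{jointly across the $<\lambda$ coordinates} above an element of the antichain coded by $\Omega_\beta$; one decrees that the unique branch omitted at any level is the guessed branch $f^\eta$ unless it happens to be canonical; and one then proves (Claims~\ref{claim511} and \ref{claim512}) that at levels $\alpha\in E^{\lambda^+}_\lambda$ where the proxy sequence hits the prediction sets, \emph{every} surviving branch is canonical --- any rogue branch would have been guessed at stationarily many earlier levels and, having survived there, must coincide with a canonical branch, with $\sq$-coherence of the $C$-sequence transferring canonicity up to $\alpha$. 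None of this (proxy $C$-sequence, coherent canonical-branch matrices, the canonicity argument) appears in your outline; moreover, your device of letting the bit $i(\alpha)$ choose which of two candidate branches to omit conflicts with it, since the omission must be the diamond-guessed branch for the canonicity argument to go through. In the paper the $2^{\lambda^+}$-fold multiplicity comes instead from the level-$\alpha$ data depending on the parameter $\eta\restriction\alpha$ through the joint routing, and pairwise distinctness (indeed non-weak-embeddability, via Proposition~\ref{weakpr}) is a consequence of the products being Souslin rather than something that needs to be engineered separately.
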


The preceding constitutes the main result of this paper and it is optimal in three ways. First, an $\aleph_1$-Souslin tree can never be full, and the theorem implies that full $\aleph_2$-Souslin trees may consistently exist.
Second, the product of $\lambda$-many $\lambda^+$-trees can never be a $\lambda^+$-tree, and the theorem successfully handles the product of less than $\lambda$ many  $\lambda^+$-Souslin trees.
Third, the number of pairwise non-weakly-embeddable full $\lambda^+$-Souslin trees constructed is $2^{\lambda^+}$ which is the largest possible.

\subsection{Organization of this paper} In Section~\ref{secfull}, we provide a few preliminaries on trees, $C$-sequences,
and the proxy principle.

In Section~\ref{secdia}, we study two new diamond-type prediction principles for trees, showing that the stronger one provably holds at subtle cardinals,
provably fails at successors of singulars, and consistently holds at all successors of regular uncountable.

In Section~\ref{inacc}, we deal with full $\kappa$-Souslin trees, for $\kappa$ a strongly inaccessible cardinal.
We start by giving the simplest combinatorial construction of a full $\kappa$-Souslin tree,
from which we obtain Theorem~\ref{thma}.
We then move on to constructing $2^\kappa$ many full $\kappa$-Souslin trees that are pairwise Souslin.

In Section~\ref{succ},
we deal with full $\kappa$-Souslin trees for $\kappa$ a successor of a regular.
To avoid repetitions, we start outright with the most general construction of a large family of full $\kappa$-Souslin trees,
from which we obtain Theorem~\ref{thmc}. We then explain how to obtain Theorem~\ref{thmb}.

\section{Preliminaries}\label{secfull}
Throughout this paper, $\kappa$ denotes a regular uncountable cardinal,
and $\lambda$ denotes an infinite cardinal.
$H_\kappa$ denotes  the collection of all sets of hereditary cardinality less than $\kappa$.
$\reg(\kappa)$ denotes the set of all infinite regular cardinals $<\kappa$.
For a set of ordinals $C$, we write $\acc(C) := \{\alpha\in C \mid \sup(C \cap \alpha) = \alpha > 0\}$
and $\nacc(C) := C \setminus \acc(C)$.

$E^\kappa_\lambda$ denotes the set $\{\alpha < \kappa \mid \cf(\alpha) = \lambda\}$, and
$E^\kappa_{\geq \lambda}$, $E^\kappa_{<\lambda}$, $E^\kappa_{\neq\lambda}$, are defined analogously.
For a set $A$, we write $[A]^{\lambda}$ for $\{B\s A\mid |B|=\lambda\}$, and $[A]^{<\lambda}$ is defined analogously.
Finally, $\ch_\lambda$ asserts that $2^\lambda=\lambda^+$.

\subsection{Abstract trees}
\begin{defn}
A $\kappa$-tree $\mathbf T=(T,<_T)$ is said to be:
\begin{itemize}
\item \emph{full} iff for every limit $\alpha<\kappa$, there exists at most one $\alpha$-branch
that is vanishing;\footnote{A chain $X\s T$ is \emph{vanishing} iff there is no $y\in T$ such that $x\le_T y$ for all $x\in X$.}
\item \emph{Hausdorff} iff for every limit $\alpha<\kappa$ and all $x,y\in T_\alpha$,
if $x_\downarrow=y_\downarrow$, then $x=y$;
\item \emph{normal} iff for all ${\bar\alpha}<\alpha<\kappa$ and $x\in T_{\bar\alpha}$, there exists $y\in T_\alpha$ with $x<_T y$;
\item \emph{splitting} iff every node of $\mathbf T$ admits at least two  immediate successors;
\item \emph{$\kappa$-Aronszajn} iff $\mathbf T$ it has no $\kappa$-branches;
\item \emph{$\kappa$-Souslin} iff $\mathbf T$ it has no $\kappa$-branches and no $\kappa$-sized antichains.
\end{itemize}
\end{defn}

For a subset $E\s\kappa$, we let $T\restriction E:=\{ t\in T\mid \h(t)\in E\}$.

\begin{obs}\label{obs22}\begin{enumerate}
\item If a splitting full $\kappa$-tree exists, then $\kappa>2^{\aleph_0}$;
\item If a full $\kappa$-Aronszajn tree exists, then $\lambda^\theta<\kappa$ for all $\theta<\lambda<\kappa$;
\item A full $\kappa$-Aronszajn tree need not be normal;
\item Every full normal $\kappa$-Aronszajn tree is rigid on every club.
\end{enumerate}
\end{obs}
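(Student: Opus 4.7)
For (1), exploit splitting at the first $\omega$ levels to inject the full binary tree ${}^{<\omega}2$ into $\mathbf T$: starting from any node of height $0$ and recursively picking two distinct immediate successors at each finite level yields $2^{\aleph_0}$-many pairwise distinct $\omega$-branches of $\mathbf T$. By fullness at most one of them is vanishing, so the surjection $T_\omega\to\{\omega\text{-branches with an upper bound}\}$, $y\mapsto y_\downarrow$, forces $|T_\omega|\geq 2^{\aleph_0}$; combined with $|T_\omega|<\kappa$ this gives $\kappa>2^{\aleph_0}$.

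For (2), assume $\mathbf T$ is a full $\kappa$-Aronszajn tree and $\aleph_0\leq\theta<\lambda<\kappa$ (the finite-$\theta$ case is trivial), and argue by contradiction that $\lambda^\theta<\kappa$. If $\lambda^\theta\geq\kappa$, the plan is to produce a limit $\alpha<\kappa$ of cofinality $\theta$ admitting more than $|T_\alpha|+1$ many distinct $\alpha$-branches, contradicting fullness. Concretely, one passes to a cone $\mathbf T_x:=\{y\mid y\geq_T x\}$ above a node $x$ chosen so that $|\mathbf T_x|=\kappa$ (such $x$ exists by averaging $|T|=\kappa$ over any lower level of size $<\kappa$), fixes a cofinal sequence $\langle\alpha_i\mid i<\theta\rangle$ in some $\alpha$ above $\h(x)$, and exhibits $\lambda^\theta$-many distinct threads $\langle t_i\mid i<\theta\rangle$ with $t_i\in T_{\alpha_i}\cap\mathbf T_x$, each yielding a distinct $\alpha$-branch. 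The hard part will be extracting the required combinatorial width at the $\alpha_i$: the Aronszajn property rules out $\kappa$-chains but not narrow trees per se, so the argument must iterate the cone-passing and leverage $|T|=\kappa$ to locate $\lambda$-wide levels at cofinally many $\alpha_i$.

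For (3), starting from any full normal $\kappa$-Aronszajn tree $\mathbf T$ (whose consistent existence is granted by Theorems~A and B), construct a non-normal example by grafting on one ``dead'' node: fix a successor level $\alpha+1<\kappa$ and some $x\in T_\alpha$, and adjoin a new point $x'$ with $x'_\downarrow:=x_\downarrow\cup\{x\}$ and no element of $T$ above $x'$. The resulting $\mathbf T':=\mathbf T\cup\{x'\}$ remains a $\kappa$-tree (only $|T_{\alpha+1}|$ increases by one, still $<\kappa$), remains $\kappa$-Aronszajn (no new cofinal chain passes through $x'$), and remains full (the branch structure at every limit level is unchanged), yet normality fails at level $\alpha+2$ because $x'$ has no successor.

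For (4), let $\mathbf T$ be full normal $\kappa$-Aronszajn and $C\s\kappa$ a club, and suppose $\phi$ is a nontrivial automorphism of $\mathbf T\restriction C$. The plan is to let $\gamma\in C$ be the minimal level at which $\phi$ acts nontrivially and derive a contradiction. If $\gamma\in\acc(C)$, then $\phi$ fixes $T\restriction(C\cap\gamma)$ pointwise, so for any $x\in T_\gamma$ with $\phi(x)\ne x$ one has $x_\downarrow\cap T\restriction(C\cap\gamma)=\phi(x)_\downarrow\cap T\restriction(C\cap\gamma)$; since $C\cap\gamma$ is unbounded in $\gamma$, downward closure in $\mathbf T$ upgrades this to $x_\downarrow=\phi(x)_\downarrow$, producing a Hausdorff violation at level $\gamma$. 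Fullness at level $\gamma$ (at most one vanishing $\gamma$-branch) together with normality and the Aronszajn growth above $\gamma$ is then used to rule out this twinning by comparing the cones above $x$ and $\phi(x)$ and the $\phi$-image relationship forced on their limit-level branches higher up. If $\gamma$ is a nonlimit (successor) point of $C$, one propagates $\phi$ forward via normality to the next element of $\acc(C)$ and applies the same reasoning. The main obstacle is turning the ``twinning is incompatible with fullness'' heuristic into a rigorous step, since mere normality does not imply Hausdorff; the argument must track how $\phi$ permutes successors at every level $>\gamma$ in $C$ and extract a contradiction at a higher limit point.
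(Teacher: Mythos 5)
Parts (1) and (3) of your proposal are fine: (1) is the paper's argument, and for (3) your ``graft a dead node'' construction is a correct alternative to the paper's route (the paper instead takes a disjoint sum with a full $\varkappa$-Aronszajn tree for some $\varkappa<\kappa$); your version is arguably more economical, since adding a single terminal node at a successor level changes no limit-level branch and hence preserves fullness, while killing normality.

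For (2), however, you have only a plan, and the two steps you flag as ``the hard part'' are exactly the content of the proof, and they are not routine byproducts of $|T|=\kappa$. The paper handles the width issue by passing to $X:=\{x\in T\mid x\text{ has }\kappa\text{-many extensions}\}$, which is a normal $\kappa$-Aronszajn tree, and invoking the standard fact that every node of $X$ has at least $\lambda$-many extensions in $X$ at some higher level (otherwise its cone would be a tree of height $\kappa$ with all levels of size $<\lambda$, which has a cofinal branch); this yields a club $D$ with $\lambda$-fold splitting between any two $D$-levels. More importantly, your sketch uses fullness only at the top level $\alpha$, but the essential use of fullness is at the \emph{intermediate} levels: when you build the $\lambda^\theta$-many threads by recursion along $\theta$-many $D$-levels $\delta_i$, at each limit stage the partial branch must have an upper bound in the tree in order to be continued. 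The paper secures this by letting $b_i$ be the unique (if any) vanishing $\delta_i$-branch and choosing the base node $x\in X_{\delta_0}\setminus\bigcup_{i<\theta}b_i$ (possible since $|X_{\delta_0}|\ge\lambda>\theta$); every partial branch then passes through $x$, so it differs from $b_i$ and is non-vanishing. Without this device your recursion can die at limit stages, and your intended contradiction (``more than $|T_\alpha|+1$ branches at the top'') is never reached. The paper then concludes directly: the resulting $\delta_\theta$-branches are (with at most one exception) non-vanishing, so they inject into $T_{\delta_\theta}$, giving $\lambda^\theta\le|T_{\delta_\theta}|<\kappa$ with no need for a contradiction argument.

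For (4), your route does not work as described, and you acknowledge that its core step is unresolved. If $\gamma\in\acc(C)$ is the least level moved by $\pi$ and $\pi(x)\neq x$ with $x\in T_\gamma$, then indeed $x_\downarrow=\pi(x)_\downarrow$; but this ``twinning'' is \emph{not} in tension with fullness: fullness restricts branches with \emph{no} upper bound, whereas here the common branch below $x$ and $\pi(x)$ has two upper bounds, which is perfectly allowed, and nothing in the hypotheses makes the tree Hausdorff. Also, in the case where the least moved level is a non-accumulation point of $C$, ``propagating forward'' does not restore the agreement below the next accumulation point, since $\pi$ already moves nodes there. The paper's proof uses a different dichotomy, in which fullness enters through vanishing branches rather than through level counting: fix $x$ with $\pi(x)\neq x$; if every $\alpha$-branch through $x$ with $\alpha\in\acc(D\setminus\h(x))$ were non-vanishing, one could use normality to recursively build a $\kappa$-branch, contradicting Aronszajnness; so fix such an $\alpha$ and a vanishing $\alpha$-branch $B$ through $x$. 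The downward closure $B'$ of $\pi[B]$ is an $\alpha$-branch distinct from $B$ (it contains $\pi(x)$ at the level of $x$), so by fullness $B'$ is non-vanishing and has an upper bound $t'\in T_\alpha$; since $\alpha\in D$, the node $\pi^{-1}(t')$ lies in the restricted tree and bounds $B$, contradicting the fact that $B$ is vanishing. This transport of an upper bound through $\pi^{-1}$ is the missing idea in your sketch.
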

\begin{proof} (1) If $\mathbf T=(T,<_T)$ is a splitting $\kappa$-tree,
then $T\restriction\omega$ contains a copy of $({}^{<\omega}2,{\s})$.
If $\mathbf T$ is in addition full, then $2^{\aleph_0}\le|T_\omega|<\kappa$.

(2)  Suppose that $\mathbf T=(T,<_T)$ is a full $\kappa$-Aronszajn tree.
Let $\theta<\lambda$ be a pair of infinite cardinals below $\kappa$, and we shall show that $\lambda^\theta<\kappa$.
For every $x\in T$, denote $x^\uparrow:=\{ y\in T\mid x<_T y\}$.
As $\mathbf T$ is a $\kappa$-tree, the set $X:=\{ x\in T\mid |x^\uparrow|=\kappa\}$ has size $\kappa$,
and so $\mathbf X:=(X,<_T)$ is a normal $\kappa$-Aronszajn tree such that $X_\alpha\s T_\alpha$ for all $\alpha<\kappa$.
Now, by a standard fact, we may pick a sparse enough club $D\s\kappa$ such that for every pair $\bar\delta<\delta$ of ordinals from $D$,
for every $x\in X_{\bar\delta}$, there are at least $\lambda$-many extensions of $x$ in $X_\delta$.
Let $\langle \delta_i\mid i\le\theta\rangle$ be the increasing enumeration of the first $\theta$ elements of $D\setminus\{\min(D)\}$.
For all $i\le\theta$, if $\mathbf T$ has a vanishing $\delta_i$-branch, then it is unique and we denote it by $b_i$; otherwise, we just let $b_i$ denote an arbitrary $\delta_i$-branch.
As $\delta_0\in D\setminus\{\min(D)\}$, $|X_{\delta_0}|\ge\lambda>\theta$,
so we may pick $x\in X_{\delta_0}\setminus\bigcup_{i<\theta}b_i$.
Evidently, $x^\uparrow\cap\mathbf X$ has at least $\lambda^\theta$ many non-vanishing $\delta_\theta$-branches.
Thus, $\lambda^\theta\le|X_{\delta_\theta}|\le|T_{\delta_\theta}|<\kappa$.

(3)  Suppose that $\mathbf T^1=(T^1,<_1)$ is a full $\kappa$-Aronszajn tree
and that $\mathbf T^0=(T^0,<_0)$ is a full $\varkappa$-Aronszajn tree for some cardinal $\varkappa<\kappa$.
Define a $\kappa$-tree $\mathbf T=(T,<_T)$ by letting:
\begin{itemize}
\item $T:=(\{0\}\times T^0)\cup(\{1\}\times T^1)$, and
\item $(i,s)<_T(j,t)$ iff $i=j$ and $s<_i t$.
\end{itemize}

Then $\mathbf T$ is a full $\kappa$-Aronszajn tree that is not normal.

(4) Suppose that $\mathbf T=(T,<_T)$ is a full normal $\kappa$-Aronszajn tree.
Towards a contradiction, suppose that $\mathbf T$ is not rigid on every club. This means that we may fix a club $D\s\kappa$,
an automorphism $\pi:T\restriction D\rightarrow T\restriction D$ of the tree $(T\restriction D,<_T)$, and some node $x\in\dom(\pi)$ such that $\pi(x)\neq x$.

If, for every $\alpha\in\acc(D\setminus \h(x))$, every $\alpha$-branch to which $x$ belongs is not vanishing, then using normality of $\mathbf T$ we could have recursively constructed a $\kappa$-branch,
thus contradicting the fact that $\mathbf T$ is $\kappa$-Aronszajn. It follows that we may pick an $\alpha\in\acc(D\setminus \h(x))$ and a vanishing $\alpha$-branch $B$ to which $x$ belongs.
As $\pi(x)\neq x$, $B':=\{ t\in T\mid \exists s\in \pi[B]\,(y<_T s)\}$ is another $\alpha$-branch,
so since $\mathbf T$ is full, $B'$ is not vanishing, and we may let $t'\in T_\alpha$ be an upper bound for $B'$.
Then $\pi^{-1}(t')$ is an upper bound for $B$, contradicting the fact that it is vanishing.
\end{proof}

In reading the next proposition, recall that by \cite[Theorem~B]{paper24}, $\gch$ together with $\square(\aleph_2)$ imply that there exists an $\aleph_1$-complete $\aleph_2$-Souslin tree.

\begin{prop}\label{friedman} Assuming the consistency of a Mahlo cardinal,
it is consistent that $\gch$, $\diamondsuit(\aleph_1)$ and $\square(\aleph_2)$ all hold,
and yet there are no full $\aleph_2$-Aronszajn trees.
\end{prop}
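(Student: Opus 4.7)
The plan is to force over $V=L$ via the Levy collapse $\mathbb P := \mathrm{Col}(\omega_1,{<}\kappa)$ where $\kappa$ is Mahlo, making $\kappa=\aleph_2$ in the generic extension $V[G]$. In $V[G]$ the three positive properties hold by standard arguments: $\gch$ and $\diamondsuit(\aleph_1)$ because $\mathbb P$ is $\omega_1$-closed and $\kappa$-cc over a model of $\gch$; and $\square(\aleph_2)=\square(\kappa)$ because $L\models\square(\kappa)$ (as $\kappa$ is Mahlo but not weakly compact in $L$) and $\kappa$-cc forcing preserves $\square(\kappa)$-sequences (a would-be thread yields a $\kappa$-length antichain of names in violation of the chain condition, and closedness of each $C_\alpha$ is absolute).

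The content of the proposition is that $V[G]$ contains no full $\aleph_2$-Aronszajn tree. Suppose toward contradiction that $\mathbf T=(T,<_T)\in V[G]$ is one. Using the $\kappa$-cc of $\mathbb P$ and its factorization $\mathbb P\cong\mathrm{Col}(\omega_1,{<}\alpha)\times\mathrm{Col}(\omega_1,[\alpha,\kappa))$ for every $\alpha<\kappa$, the set $C:=\{\alpha<\kappa : \mathbf T\restriction\alpha\in V[G_\alpha]\}$ is a club in $\kappa$, where $G_\alpha:=G\cap\mathrm{Col}(\omega_1,{<}\alpha)$. By Mahloness, pick an inaccessible $\alpha\in C$. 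In $V[G_\alpha]$ we have $\alpha=\aleph_2$, while in $V[G]$ the tail collapses $\alpha$ to $\aleph_1$, so $\cf^{V[G]}(\alpha)=\aleph_1$ and $|T\restriction\alpha|^{V[G]}=\aleph_1$. Moreover, fullness is downward-absolute in the sense that any $\gamma$-branch in $V[G_\alpha]$ persists as one in $V[G]$, so $\mathbf T\restriction\alpha$ is a full $\alpha$-Aronszajn tree already in $V[G_\alpha]$.

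The goal is to produce at least two distinct vanishing $\alpha$-branches of $\mathbf T$ in $V[G]$, contradicting fullness. The number of non-vanishing $\alpha$-branches is bounded by $|T_\alpha|\le\aleph_1$, while a counting argument gives at most $|T\restriction\alpha|^{\cf(\alpha)}=\aleph_1^{\aleph_1}=\aleph_2$ many $\alpha$-branches in total. Thus it suffices to show that the tail forcing introduces at least $\aleph_2$ distinct new $\alpha$-branches of $\mathbf T\restriction\alpha$ over $V[G_\alpha]$. The idea is to exploit the homogeneity of $\mathrm{Col}(\omega_1,[\alpha,\kappa))$: for each generic surjection $\sigma:\omega_1\to\alpha$ it adds, a canonical selection of tree-nodes tracking $\sigma$ through the full, normal $\mathbf T\restriction\alpha$ produces a cofinal chain, and sufficiently many automorphisms of the tail (moving names for such chains) yield $\aleph_2$ pairwise distinct such chains.

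The main obstacle is making the last step rigorous, since the Levy collapse does not on its face add branches to a fixed ground-model tree; one must use the full, normal structure of $\mathbf T\restriction\alpha$ in $V[G_\alpha]$ to convert generic cofinal sequences into genuine tree-branches. If this direct homogeneity-plus-selection argument proves elusive, a fallback is to replace $\mathbb P$ with a Mitchell-style iteration that at inaccessible stages explicitly shoots cofinal branches through any potential full $\alpha$-tree on the ground-model side, while maintaining $\gch$, $\diamondsuit(\aleph_1)$, and $\square(\aleph_2)$ throughout --- the preservation of $\square(\aleph_2)$ being the most delicate ingredient.
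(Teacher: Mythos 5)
Your first half (arranging $\gch$, $\diamondsuit(\aleph_1)$ and $\square(\aleph_2)$ in a collapse extension of $L$) is fine, but the heart of the proposition --- that the model has no full $\aleph_2$-Aronszajn tree --- is exactly the step you leave unproved. Your reduction is correct as far as it goes: if $T\restriction\alpha$ acquired $\aleph_2$ many (indeed, just two vanishing) $\alpha$-branches in $V[G]$, fullness would fail. But the claim that the tail collapse $\mathrm{Col}(\omega_1,[\alpha,\kappa))$ adds such branches is never established, and it is not a routine fact: the standard closure/branch lemmas only run in the opposite direction and their hypotheses (levels of size $<2^{\aleph_0}$) fail here, while ``tracking a generic surjection $\sigma:\omega_1\to\alpha$ through the tree'' breaks down precisely at limit stages of the recursion, where the countable chain built so far may be (or cofinally often hit) a vanishing branch --- fullness only limits you to one vanishing branch \emph{per level}, which does not let the recursion continue; note also that you assert normality of $\mathbf T\restriction\alpha$ without justification (a full Aronszajn tree need not be normal, cf.\ Observation~\ref{obs22}(3)). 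The homogeneity/automorphism device multiplies branches only \emph{after} at least one new branch name is secured, so it cannot substitute for the missing construction. Your fallback (a Mitchell-style iteration sealing potential full trees while preserving $\square(\aleph_2)$) is likewise only a programme, with the delicate points --- which trees to treat, why sealing is preserved at later stages, and why $\square(\aleph_2)$ survives --- left open.

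For comparison, the paper does not use the plain Levy collapse at all. It starts from Shelah's model (obtained from a Mahlo cardinal) in which \emph{every stationary subset of $E^{\aleph_2}_{\omega}$ contains a closed copy of $\omega_1$} --- a property strictly stronger than the stationary reflection that the Levy collapse of a Mahlo is known to give, and not available in your model as described --- and then runs a purely combinatorial argument inside that model: choose for each $\delta\in E^{\aleph_2}_\omega$ the (unique, if existing) vanishing $\delta$-branch $B_\delta$, find a node $t_\gamma$ lying on $B_\delta$ for stationarily many $\delta$, extract a closed copy of $\omega_1$ from that stationary set, and use a node off $t_\gamma$ together with normality of a pruned subtree to build $2^{\aleph_1}$ distinct $\tau$-branches at a single level $\tau$; fullness then forces $|T_\tau|\ge 2^{\aleph_1}=\aleph_2$, a contradiction. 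So the gap in your write-up is not a technicality: without either the Friedman-type property or a rigorous argument that the collapse itself adds branches to an arbitrary given full tree, the key claim remains unjustified.
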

\begin{proof}
In \cite[Theorem~7.1 of \S XI]{sh:f},
Shelah starts with any given Mahlo cardinal $\kappa$ in $\mathsf{L}$ and pass to a forcing extension
in which the $\gch$ holds, $\kappa=\aleph_2$,
and every stationary subset of $E^{\aleph_2}_{\omega}$ contains a closed copy of $\omega_1$.
As this is preserved by any forcing of size $\aleph_1$, we may moreover assume that $\diamondsuit(\aleph_1)$ holds in the extension.
By letting $\kappa$ be a non-weakly-compact cardinal (e.g., be the least Mahlo cardinal in $\mathsf{L}$),
\cite[Claim~1.10]{MR908147} ensures that $\square(\aleph_2)$ holds in the extension, as well.
Hereafter, work in this model, and suppose towards a contradiction that $\mathbf T=(T,<_T)$ is a full $\aleph_2$-Aronszajn tree.

Given $\delta\in  E^{\aleph_2}_\omega$,
if there is a vanishing $\delta$-branch then it is unique, and we denote it by $B_\delta$;
otherwise, we let $B_\delta$ be an arbitrary $\delta$-branch.
Let $\gamma<\aleph_2$.
For every $\delta\in E^{\aleph_2}_\omega$ above $\gamma$, there exists a unique $t\in T_\gamma\cap B_\delta$,
so since $|T_\gamma|<\aleph_2$, we may fix some $t_\gamma\in T_\gamma$ for which $\{ \delta\in E^{\aleph_2}_{\omega}\mid t_\gamma\in B_\delta\}$ is stationary.

Next, as in the proof of Clause~(2) of Observation~\ref{obs22},
fix $X\s T$ for which $\mathbf X:=(X,<_T)$ is a normal $\aleph_2$-Aronszajn tree such that $X_\alpha\s T_\alpha$ for all $\alpha<\aleph_2$,
and fix a club $D\s\aleph_2$ such that for every pair $\gamma<\delta$ of ordinals from $D$,
for every node $x\in X_\gamma$, there are two distinct extensions of $x$ in $X_\delta$.
Let $\gamma$ denote the second element of $D$, so that $|X_\gamma|>1$.
By the choice of $t_\gamma$, the set $S:=\{ \delta\in E^{\aleph_2}_{\omega}\cap D\mid t_\gamma\in B_\delta\}$ is stationary,
so it contains a closed copy of $\omega_1$.

Fix a strictly increasing and continuous map $\pi:\omega_1\rightarrow S$.
Pick $x\in X_\gamma\setminus\{ t_\gamma\}$. As $x\notin B_{\pi(i)}$ for every $i<\omega_1$,
it is now easy to construct a system of nodes $\langle x^f_i \mid f\in{}^{\omega_1}2, i<\omega_1\rangle$ in such a way that
for all $f,g\in{}^{\omega_1}2$:
\begin{itemize}
\item for all $i<j<\omega_1$, $x^f_i\in X_{\pi(i)}$ and $x<_T x^f_i <_T x^f_j$;
\item for every $i<\omega_1$, $x^f_i = x^g_i$ iff $f(i)=g(i)$.
\end{itemize}

Let $\tau:=\sup(\im(\pi))$.
Then the above system induces $2^{\aleph_1}$ many distinct $\tau$-branches through $\mathbf T$,
and since $\mathbf T$ is full, this must mean that $|T_\tau|\ge2^{\aleph_1}$, contradicting the fact that $|T_\tau|<\aleph_2$.
\end{proof}
\begin{defn} A weak embedding
from a tree $\mathbf S=(S,<_S)$ to a tree $\mathbf T=(T,<_T)$
is a map $f:S\rightarrow T$ satisfying that all $s,s'\in S$ with $s<_S s'$, $f(s)<_T f(s')$.
\end{defn}
Note that a weak embedding may be constant on antichains.

\begin{defn}\label{def25}
For a sequence of $\kappa$-trees $\langle \mathbf{T}^i \mid i<\tau \rangle$
with $\mathbf T^i = (T^i, {<_{T^i}})$ for each $i<\tau$,
the product
$\bigotimes_{i<\tau} \mathbf{T}^i$
is defined to be the tree $\mathbf T=({T}, {<_{{T}}})$,
where:
\begin{itemize}
\item $T=\bigcup\{\prod_{i<\tau}(T^i)_\alpha\mid \alpha<\kappa\}$;
\item $\vec{s} <_{{T}} \vec{t}$ iff $\vec s(i) <_{T^i} \vec t(i)$ for every $i<\tau$.
\end{itemize}
\end{defn}

\begin{prop}\label{weakpr}
Suppose that:
\begin{itemize}
\item $\mathbf S=(S,<_S)$ and $\mathbf T=(T,<_T)$ are $\kappa$-trees;
\item $\mathbf S\otimes\mathbf T$ is a $\kappa$-Souslin tree.
\end{itemize}

Then there are no weak embeddings from $\mathbf S$ to $\mathbf T$.
\end{prop}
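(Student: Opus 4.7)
The plan is to assume such a weak embedding $f : \mathbf{S} \to \mathbf{T}$ exists and derive a contradiction by producing a $\kappa$-chain or $\kappa$-antichain in $\mathbf{S}\otimes\mathbf{T}$. My first move is to replace $f$ with a height-preserving variant $g : S \to T$. Since the well-ordered set $s_\downarrow$ injects under $f$ into the chain $f(s)_\downarrow$, we have $\h(f(s)) \ge \h(s)$; I may therefore define $g(s)$ to be the unique element of $T_{\h(s)}$ lying $\le_T$-below $f(s)$. A short verification shows that $g$ remains a weak embedding: if $s <_S s'$, then both $g(s)$ and $g(s')$ lie in $f(s')_\downarrow \cup \{f(s')\}$ at heights $\h(s) < \h(s')$, forcing $g(s) <_T g(s')$.

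Next, I would consider the diagonal subset $\Delta := \{(s, g(s)) : s \in S\}$ of $\mathbf{S}\otimes\mathbf{T}$, which is well-defined because $g$ is height-preserving. The map $s \mapsto (s, g(s))$ is an order-isomorphism from $\mathbf{S}$ onto $\Delta$: order-preservation is immediate from the weak-embedding property of $g$, while order-reflection follows by projecting to the first coordinate. Since $\Delta$ is a $\kappa$-sized subposet of the $\kappa$-Souslin tree $\mathbf{S}\otimes\mathbf{T}$, it contains neither a $\kappa$-chain nor a $\kappa$-antichain; transferring through the isomorphism forces $\mathbf{S}$ itself to be $\kappa$-Souslin. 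A parallel argument using the projection $(s,t)\mapsto t$ shows that $\mathbf{T}$ has no $\kappa$-antichain.

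The final step is to derive the contradiction by constructing a $\kappa$-antichain inside $\mathbf{S}\otimes\mathbf{T}$. The main tool is the auxiliary map $\phi : \mathbf{S}\otimes\mathbf{S} \to \mathbf{S}\otimes\mathbf{T}$, $(s,s') \mapsto (s, g(s'))$, which is always order-preserving. Passing WLOG to the normal surviving subtree of $\mathbf{S}$ in the sense of Observation~\ref{obs22}(2), one sees that if $g$ is injective on every level, then $\phi$ is actually an order-embedding — because $g(s') \le_T g(u')$ together with normality of $\mathbf{S}$ and injectivity of $g$ on the relevant level forces $s' \le_S u'$. In that injective case, any $\kappa$-antichain in $\mathbf{S}\otimes\mathbf{S}$ pulls forward to one in $\mathbf{S}\otimes\mathbf{T}$, and such an antichain exists by a standard splitting analysis inside the Souslin tree $\mathbf{S}$.

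The main obstacle is precisely the non-injective case, since, as the paper explicitly warns, a weak embedding may be constant on antichains. Handling this requires extracting antichain structure from the collapse itself: for each $\alpha$ where $g$ fails injectivity there exist $s \ne s'$ in $S_\alpha$ with $g(s) = g(s')$, giving distinct diagonal points $(s, g(s)), (s', g(s)) \in \Delta$ sharing a second coordinate. Combining $\kappa$-many such collapsing pairs with the splitting of the Souslin tree $\mathbf{S}$ should still yield the required $\kappa$-antichain in $\mathbf{S}\otimes\mathbf{T}$, but this delicate case analysis is the technical heart of the argument.
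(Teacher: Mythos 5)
Your preliminary reductions are sound: the height-collapsed map $g$ is indeed a weak embedding, the diagonal $\{(s,g(s))\mid s\in S\}$ correctly shows that $\mathbf S$ is $\kappa$-Souslin, $\mathbf T$ has no $\kappa$-antichain, and in the level-injective case your map $(s,s')\mapsto(s,g(s'))$ does reflect the order and pulls a $\kappa$-antichain of $\mathbf S\otimes\mathbf S$ (which exists by the standard splitting argument) into $\mathbf S\otimes\mathbf T$. The problem is that the remaining case is not merely ``delicate'': it is the whole point of the proposition, and you leave it at the level of a hope (``should still yield the required $\kappa$-antichain''). Worse, the facts you have assembled at that stage cannot suffice to finish along the route you sketch. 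Consider $\mathbf T:=(\kappa,{\in})$, whose levels are singletons, and let $g=f=\h_{\mathbf S}$ for a $\kappa$-Souslin $\mathbf S$: then $\mathbf S$ is Souslin, $\mathbf T$ has no $\kappa$-antichain, $g$ collapses every level (so you are squarely in the non-injective case with $\kappa$-many collapsing pairs), and yet $\mathbf S\otimes\mathbf T\cong\mathbf S$ is $\kappa$-Souslin and the weak embedding exists. So no amount of ``extracting antichain structure from the collapse'' can work from $\mathbf S$ Souslin $+$ $\mathbf T$ antichain-free alone; any correct treatment must also use that $\mathbf T$ has no $\kappa$-branch, a fact you never establish (and which the paper's proof invokes at the outset by recording that both factors are Souslin, using Aronszajn-ness of $\mathbf T$ crucially in its second claim).

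This is also where your route diverges from the paper's, which avoids any injectivity dichotomy and never tries to build an antichain in the product. The paper works with $X:=f[S']$ where $S'$ is the set of nodes with $\kappa$-many extensions, uses that $\mathbf T$ is Aronszajn and antichain-free to show that $\kappa$-many points $y\in X$ admit two incompatible proper extensions in $X$, and for each such $y$ produces $s_y^0,s_y^1\in S$ with $f(s_y^0),f(s_y^1)$ incompatible above $y$ and $\h_{\mathbf S}(s_y^0)=\h_{\mathbf T}(f(s_y^1))$. It then uses the Souslinity of $\mathbf S\otimes\mathbf T$ \emph{positively}: after thinning to a height-separated family, the $\kappa$-sized set $\{(s^0_\gamma,f(s^1_\gamma))\mid\gamma\in\Gamma\}$ must contain a comparable pair, and any comparable pair forces both incompatible nodes $f(s^0_\gamma),f(s^1_\gamma)$ below the single node $y_\delta$, a contradiction. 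To repair your argument you would need to prove something of this kind in the collapsing case, so as it stands the proposal has a genuine gap precisely at the configuration the paper's warning about weak embeddings being constant on antichains is pointing to.
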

\begin{proof} Note that the second bullet implies that $\mathbf S$ and $\mathbf T$ are $\kappa$-Souslin trees.
Towards a contradiction, suppose that $f:S\rightarrow T$ is a weak embedding from $\mathbf S$ to $\mathbf T$.
\begin{claim}\label{claim21} For every $s\in S$, $\h_{\mathbf T}(f(s))\ge\h_{\mathbf S}(s)$.
\end{claim}
\begin{proof} This is clear.
\end{proof}
As $\mathbf S$ is $\kappa$-Souslin,
the set $S':=\{ s\in S\mid s\text{ admits }\kappa\text{-many extensions in }S\}$ is co-bounded in $S$.
Put $X:=f[S']$ and
note it is of size $\kappa$.
Indeed, otherwise, there exists $A\in[S']^\kappa$ such that $|f[A]|=1$. As $\mathbf S$ is $\kappa$-Souslin,
we must be able to find a pair $s<_S s'$ in $A$, but then $f(s)<_T f(s')$, contradicting the fact that $|f[\{s,s'\}]|=|f[A]|=1$.

\begin{claim} The following set has size $\kappa$:
$$Y:=\{ x\in X\mid x\text{ admits two incompatible proper extensions in }X\}.$$
\end{claim}
\begin{proof} Suppose not. It follows that $\epsilon:=\sup\{\h_{\mathbf T}(x)\mid x\in Y\}$ is smaller than $\kappa$.
By the definition of $\epsilon$, for every $x\in X$ with $\h_{\mathbf T}(x)>\epsilon$, $U_x:=\{ y\in X\mid x<_T y\}$ is linearly ordered by $<_T$.
As $\mathbf T$ is $\kappa$-Aronszajn, the chain $U_x$ cannot have size $\kappa$.
As $\mathbf T$ is narrow, we may fix a club $D\s\kappa\setminus(\epsilon+1)$ such that, for every $\delta\in D$, for every $x\in X$ with $\epsilon<\h_{\mathbf T}(x)<\delta$,
$\sup\{ \h_{\mathbf T}(y)\mid y\in U_x\}<\delta$. Recalling that $|X|=\kappa$ and by possibly shrinking $D$ further, we may assume that for each $\gamma\in D$,
there exists $x_\gamma\in X$ with $\gamma<\h_{\mathbf T}(x_\gamma)<\min(D\setminus(\gamma+1))$.
But, then for every pair $\gamma<\delta$ of ordinals from $D$, $x_\delta\notin U_{x_\gamma}$, meaning that $\{ x_\gamma\mid \gamma\in D\}$ is a $\kappa$-sized antichain in $\mathbf T$,
contradicting the fact that $\mathbf T$ is $\kappa$-Souslin.
\end{proof}

\begin{claim} For every $y\in Y$, there is a pair  $(s_y^0,s_y^1)\in S\times S$ such that:
\begin{itemize}
\item $f(s_y^0)$ and $f(s_y^1)$ are incompatible proper extensions of $y$;
\item $\h_{\mathbf S}(s_y^0)=\h_{\mathbf T}(f(s_y^1))$.
\end{itemize}
\end{claim}
\begin{proof} Let $y\in Y$ and then pick $x_0,x_1\in X$ that are two incompatible proper extensions of $y$.
Without loss of generality, $\h_{\mathbf T}(x_0)\le \h_{\mathbf T}(x_1)$. For each $i<2$, as $x_i\in X$, we may pick $s_i\in S'$ such that $f(s_i)=x_i$.
By Claim~\ref{claim21}, $\h_{\mathbf S}(s_0)\le h_{\mathbf T}(f(s_0))\le h_{\mathbf T}(f(s_1))$,
so since $s_0\in S'$, we may pick $s_y^0\in S$ with $s_0\le_S s_y^0$ such that $\h_{\mathbf S}(s_y^0)=\h_{\mathbf T}(x_1)$.
As $f$ is a weak embedding, $y<_T f(s_0)\le_T f(s_y^0)$.
So, letting $s_y^1:=s_1$, it is the case that
$f(s_y^0)$ and $f(s_y^1)$ are incompatible proper extensions of $y$
and that $\h_{\mathbf S}(s_y^0)=\h_{\mathbf T}(x_1)=\h_{\mathbf T}(f(s_y^1))$.
\end{proof}

It follows that we may fix a sparse enough set $\Gamma\in[\kappa]^\kappa$,
and a sequence $\langle (y_\gamma,s_\gamma^0,s_\gamma^1)\mid \gamma\in\Gamma\rangle$ consisting of elements in $Y\times S\times S$ such that for every pair $\gamma<\delta$ of elements of $\Gamma$:
\begin{enumerate}
\item $f(s_\gamma^0)$ and $f(s_\gamma^1)$ are incompatible proper extensions of $y_\gamma$;
\item $\gamma=\h_{\mathbf T}(y_\gamma)<\h_{\mathbf T}(f(s_\gamma^1))=\h_{\mathbf S}(s_\gamma^0)\le \h_{\mathbf T}(f(s_\gamma^0))<\delta$.
\end{enumerate}

As $\{ (s^0_\gamma,f(s^1_\gamma))\mid \gamma\in\Gamma\}$ is a $\kappa$-sized subset of the $\kappa$-Souslin tree $\mathbf S\otimes\mathbf T$,
we may fix a pair $\gamma<\delta$ of elements of $\Gamma$ such that $s_\gamma^0<_Ss_\delta^0$ and $f(s_\gamma^1)<_Tf(s_\delta^1)$.
Since $f$ is a weak embedding, $s_\gamma^0<_S s_\delta^0$ implies $f(s_\gamma^0)<_T f(s_\delta^0)$.
So, for every $i<2$, combining the facts that $f(s_\gamma^i)<_T f(s_\delta^i)$ and $y_\delta<_T f(s_\delta^i)$ with Clause~(2), we infer that $f(s_\gamma^i)<_T y_\delta$.
Then $f(s_\gamma^0),f(s_\gamma^i)<_T y_\delta$, contradicting Clause~(1)
\end{proof}
\subsection{Streamlined trees}
\begin{defn}[\cite{paper23}]
A \emph{streamlined $\kappa$-tree} is a subset $T\s{}^{<\kappa}H_\kappa$
such that the following two conditions are satisfied:
\begin{enumerate}
\item $T$ is downward-closed, i.e, for every $t\in T$, $\{ t\restriction \alpha\mid \alpha<\kappa\}\s T$;
\item for every $\alpha<\kappa$, the set
$T_\alpha:=T\cap{}^\alpha\kappa$ is nonempty and has size $<\kappa$.
\end{enumerate}
For every $\alpha\le\kappa$, we denote
$\mathcal B(T\restriction\alpha):=\{f\in{}^\alpha H_\kappa\mid\forall\beta<\alpha\,(f\restriction\beta\in T)\}$.
\end{defn}

By convention, we identify a streamlined tree $T$ with the poset $\mathbf T=(T,{\s})$.
Note that every streamlined $\kappa$-tree $T$ is Hausdorff,
and that it is full iff $|\mathcal B(T\restriction\alpha)\setminus T_\alpha|\le 1$ for every $\alpha\in\acc(\kappa)$.

\begin{defn}
A streamlined tree $T\s{}^{<\kappa}H_\kappa$ is said to be:
\begin{itemize}
\item \emph{binary} iff $T\s{}^{<\kappa}2$;
\item \emph{prolific} iff for all $\alpha<\kappa$ and $t\in T_\alpha$,
$\{ t{}^\smallfrown\langle i\rangle\mid i<\max\{\omega,\alpha\}\}\s T$.
\end{itemize}
\end{defn}

Note that every prolific tree is splitting.

\subsection{Coherent sequences}
Recall that a \emph{$C$-sequence over $\kappa$} is a sequence $\vec C=\langle C_\alpha\mid\alpha<\kappa\rangle$ such that,
for every $\alpha<\kappa$, $C_\alpha$ is a closed subset of $\alpha$ with $\sup(C_\alpha)=\sup(\alpha)$.
It is \emph{$\lambda$-bounded} iff $\otp(C_\alpha)\le\lambda$ for all $\alpha<\kappa$.
For a binary relation $\mathcal R$ over $[\kappa]^{<\kappa}$, $\vec C$ is said to be \emph{$\mathcal R$-coherent}
iff for all $\beta<\alpha<\kappa$ such that $\beta\in\acc(C_\alpha)$, it is the case that $C_\beta\mathrel{\mathcal R}C_\alpha$.
In this paper, we shall only be concerned with the binary relations $\sq,\sq^*$ and $\sq_\lambda$. They are defined as follows:
\begin{itemize}
\item $D\sq C$ iff there exists some ordinal $\beta$ such that $D = C \cap \beta$;
\item $D\sq^* C$ iff $D \setminus \varepsilon\sqsubseteq C \setminus \varepsilon$
for some $\varepsilon < \sup(D)$;
\item $D \sq_\lambda C$ iff $D \sq C$ or ($\otp(C)<\lambda$ and $\nacc(C)$ consists only of successor ordinals).
\end{itemize}

\begin{defn}[Jensen's and Baumgartner's squares] For an infinite cardinal $\lambda$:
\begin{itemize}
\item $\square_\lambda$ asserts there is a $\sq$-coherent $\lambda$-bounded $C$-sequence over $\lambda^+$;
\item $\square^B_\lambda$ asserts there is a $\sq_\lambda$-coherent $\lambda$-bounded $C$-sequence over $\lambda^+$.
\end{itemize}
\end{defn}

\begin{defn}[Special case of the proxy principle from \cite{paper22}]\label{proxydef}
Suppose that $\theta\le\kappa$ is a cardinal,
$\mathcal R$ is a binary relation over $[\kappa]^{<\kappa}$ and $\mathcal S$ is a collection of stationary subsets of $\kappa$.

The principle $\p^-(\kappa,2,\mathcal{R},\theta,\mathcal{S})$ asserts the existence of an $\mathcal R$-coherent $C$-sequence $\vec C=\langle C_\alpha\mid \alpha<\kappa\rangle$ possessing the following `guessing' feature.
For every sequence $\langle B_i\mid i<\theta\rangle$ of cofinal subsets of $\kappa$,
for every $S\in\mathcal{S}$, there are stationarily many $\alpha\in S$
such that $\sup(\nacc(C_\alpha)\cap B_i)=\alpha$ for all $i<\min\{\alpha,\theta\}$.

The principle $\p(\kappa, 2, \mathcal R, \theta, \mathcal S)$
asserts that $\p^-(\kappa, 2, \mathcal R, \theta, \mathcal S)$
and $\diamondsuit(\kappa)$ both hold.
\end{defn}
\begin{conv} If we omit $\mathcal S$, then we mean $\mathcal S:=\{\kappa\}$.
\end{conv}

\begin{defn}[\cite{paper22}]\label{xbox} $\boxtimes^-(\kappa)$ stands for $\p^-(\kappa,2,{\sq},1)$.
\end{defn}

By \cite[\S3]{lh_trees_squares_reflection},
the generic for the forcing to add a $\sq$-coherent $C$-sequence over $\kappa$ by initial segments will constitute a $\boxtimes^-(\kappa)$-sequence.
So the principle of Definition~\ref{proxydef} should be understood as asserting the existence of an $\mathcal R$-coherent $C$-sequence over $\kappa$ possessing some generic properties.

\section{Diamond-type prediction principles}\label{secdia}

We open this section by recalling Jensen's diamond principle $\diamondsuit(\kappa)$ and two of its equivalent forms.

\begin{fact}[{\cite[Lemma~2.2]{paper22}}]\label{def_Diamond_H_kappa} The following are equivalent:
\begin{enumerate}
\item $\diamondsuit(\kappa)$, i.e., there is a sequence $\langle f_\beta\mid \beta<\kappa\rangle$ such that
for every function $f:\kappa\rightarrow\kappa$, the set $\{ \beta<\kappa\mid f\restriction\beta=f_\beta\}$ is stationary in $\kappa$.
\item $\diamondsuit^-(H_\kappa)$, i.e., there is a sequence $\langle \Omega_\beta \mid \beta < \kappa \rangle$ such that for all $p\in H_{\kappa^{+}}$  and $\Omega \subseteq H_\kappa$,
there exists an elementary submodel $\mathcal M\prec H_{\kappa^{+}}$ such that:
\begin{itemize}
\item $p\in\mathcal M$;
\item $\mathcal M\cap\kappa\in\kappa$;
\item $\mathcal M\cap \Omega=\Omega_{\mathcal M\cap\kappa}$.
\end{itemize}
\item $\diamondsuit(H_\kappa)$, i.e., there are a partition $\langle R_i \mid  i < \kappa \rangle$ of $\kappa$
and a sequence $\langle \Omega_\beta \mid \beta < \kappa \rangle$ such that for all $p\in H_{\kappa^{+}}$, $\Omega \subseteq H_\kappa$,
and $i<\kappa$,
there exists an elementary submodel $\mathcal M\prec H_{\kappa^{+}}$ such that:
\begin{itemize}
\item $p\in\mathcal M$;
\item $\mathcal M\cap\kappa\in  R_i$;
\item $\mathcal M\cap \Omega=\Omega_{\mathcal M\cap\kappa}$.
\end{itemize}
\end{enumerate}
\end{fact}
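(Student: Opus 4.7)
The plan is to establish the cycle $(3) \Rightarrow (2) \Rightarrow (1) \Rightarrow (3)$. The first implication is immediate: given $p \in H_{\kappa^+}$ and $\Omega \subseteq H_\kappa$, apply clause $(3)$ with any fixed $i$ (say $i = 0$) to obtain the required $\mathcal{M}$.

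For $(2) \Rightarrow (1)$, I would define from a $\diamondsuit^-(H_\kappa)$-sequence $\langle \Omega_\beta \mid \beta < \kappa \rangle$ a candidate $\diamondsuit(\kappa)$-sequence by setting $f_\beta := \Omega_\beta$ whenever $\Omega_\beta$ is the graph of a function $\beta \to \kappa$, and $f_\beta := \emptyset$ otherwise. To verify, fix a test function $f : \kappa \to \kappa$ and a club $C \subseteq \kappa$, and apply $(2)$ with $p := (f, C)$ and $\Omega := f$, obtaining $\mathcal{M} \prec H_{\kappa^+}$ with $\beta := \mathcal{M} \cap \kappa \in \kappa$. Since $\mathcal{M}$ is closed under successor, $\beta$ is a limit ordinal; since $C \in \mathcal{M}$ is a club, $C \cap \beta$ is cofinal in $\beta$, whence $\beta \in C$. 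Since $f \in \mathcal{M}$, elementarity forces $\mathcal{M} \cap f = f \restriction \beta$, and therefore $\Omega_\beta = \mathcal{M} \cap f = f \restriction \beta$, giving $f_\beta = f \restriction \beta$ for a $\beta \in C$.

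The main content lies in $(1) \Rightarrow (3)$. Since $\diamondsuit(\kappa) \Rightarrow 2^{<\kappa} = \kappa$, we have $|H_\kappa| = \kappa$; fix a bijection $\pi : \kappa \to H_\kappa$. From a $\diamondsuit(\kappa)$-sequence $\langle A_\beta \mid \beta < \kappa \rangle$, I would extract, by a routine coding argument, a partition $\langle R_i \mid i < \kappa \rangle$ of $\kappa$ together with a sequence $\langle Y_\beta \mid \beta < \kappa \rangle$ such that, for every $i < \kappa$ and every $Y \subseteq \kappa$, the set $\{\beta \in R_i \mid Y \cap \beta = Y_\beta\}$ is stationary. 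Concretely, decode each $A_\beta$ as a pair $(i(\beta), Y_\beta)$ under a coding that is absolute on a club, set $R_i := \{\beta < \kappa \mid i(\beta) = i\}$, and note that guessing a prescribed $(i, Y)$ reduces to guessing the ordinary $\diamondsuit(\kappa)$-code of this pair. Finally, define $\Omega_\beta := \pi[Y_\beta]$.

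To verify $(3)$, fix $p \in H_{\kappa^+}$, $\Omega \subseteq H_\kappa$, and $i < \kappa$; fix a well-ordering $\vartriangleleft$ of $H_{\kappa^+}$, and for each $\beta < \kappa$ let $\mathcal{M}_\beta$ be the Skolem hull in $(H_{\kappa^+}, \in, \vartriangleleft)$ of $\beta \cup \{p, \Omega, \pi\}$. A standard closing-off argument (the fixed points of $\alpha \mapsto \sup(\mathcal{M}_\alpha \cap \kappa)$ form a club) produces a club of $\beta$ with $\mathcal{M}_\beta \cap \kappa = \beta$; setting $Y := \pi^{-1}[\Omega]$, the partition property supplies $\beta \in R_i$ in this club with $Y \cap \beta = Y_\beta$. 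Since $\pi \in \mathcal{M}_\beta$, elementarity yields $\mathcal{M}_\beta \cap H_\kappa = \pi[\mathcal{M}_\beta \cap \kappa] = \pi[\beta]$, and hence $\mathcal{M}_\beta \cap \Omega = \pi[\beta \cap Y] = \pi[Y_\beta] = \Omega_\beta$, as required. The main technical point is carrying $\pi$ through the Skolem hull so that the identity $\mathcal{M}_\beta \cap H_\kappa = \pi[\mathcal{M}_\beta \cap \kappa]$ holds; this is precisely the bridge that transfers $\diamondsuit(\kappa)$-guesses on subsets of $\kappa$ to $\diamondsuit(H_\kappa)$-guesses on subsets of $H_\kappa$.
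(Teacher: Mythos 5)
Your proof is correct. Note that the paper itself gives no argument for this statement: it is quoted as a Fact from Brodsky--Rinot (\cite[Lemma~2.2]{paper22}), and your cycle $(3)\Rightarrow(2)\Rightarrow(1)\Rightarrow(3)$ --- with the main implication $(1)\Rightarrow(3)$ handled by noting $2^{<\kappa}=\kappa$, fixing a bijection $\pi:\kappa\leftrightarrow H_\kappa$, refining the diamond sequence by a pair-coding absolute on a club to get the partition $\langle R_i\mid i<\kappa\rangle$, and closing off Skolem hulls so that $\mathcal M_\beta\cap H_\kappa=\pi[\beta]$ --- is essentially the standard argument of that cited source, so there is nothing substantive to compare beyond noting the match.
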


We now introduce two new diamond-type principles.

\begin{defn}[Diamonds for trees]\label{def41}
Suppose that $S\s E^\kappa_{>\omega}$ and $B\s\kappa$ are stationary sets.
A sequence $\langle f_\beta\mid \beta\in B\rangle$ is said to witness:
\begin{itemize}
\item $\diamondsuit_{S,B}(\kappa\textup{-trees})$
iff for every streamlined $\kappa$-tree $T$, there are stationarily many $\alpha\in S$ such that,
for every $f\in T_\alpha$, $\{ \beta\in B\cap\alpha\mid f\restriction\beta=f_\beta\}$ is stationary in $\alpha$;
\item $\diamondsuit^*_{S,B}(\kappa\textup{-trees})$
iff for every streamlined $\kappa$-tree $T$, there exists a club $D\s\kappa$ such that,
for every $\alpha\in S\cap D$, for every $f\in T_\alpha$, $\{ \beta\in B\cap\alpha\mid f\restriction\beta=f_\beta\}$ is stationary in $\alpha$.
\end{itemize}
\end{defn}
\begin{conv} If we omit $B$, then we mean $B:=\kappa$.
\end{conv}

\begin{remark}\label{jen}
\begin{enumerate}
\item  If $\kappa$ is weakly compact and $\vec f=\langle f_\beta\mid\beta<\kappa\rangle$ witnesses $\diamondsuit(\kappa)$ as in Fact~\ref{def_Diamond_H_kappa}(1),
then the set $S:=\{ \alpha\in \reg(\kappa)\setminus\{\omega\}\mid \vec f\restriction\alpha\text{ witnesses }\diamondsuit(\alpha)\}$ is stationary in $\kappa$,
and hence $\diamondsuit^*_S(\kappa\textup{-trees})$ holds.
\item Jensen's construction in $\mathsf{L}$ (see \cite[Lemma~6.5]{jen72}) of a $\diamondsuit(\kappa)$-sequence $\vec f=\langle f_\beta\mid\beta<\kappa\rangle$
has the property that $\vec f\restriction\alpha$ witnesses $\diamondsuit(\alpha)$ for every
regular uncountable $\alpha<\kappa$.
In particular, in $\mathsf{L}$, for every Mahlo cardinal $\kappa$,
$\diamondsuit^*_S(\kappa\textup{-trees})$ holds for $S:=\reg(\kappa)\setminus\{\omega\}$.
\item $\diamondsuit^*_{S,B}(\kappa\textup{-trees})$ implies the principle $\diamondsuit_B^S$ of \cite[\S2.2]{paper07}.
\item $\diamondsuit_{S,B}(\kappa\textup{-trees})$ implies the principle $\prod(B,\theta,S)$ of \cite{paper38} for all $\theta<\kappa$.
\item A simple variation of the proof of Proposition~\ref{friedman} establishes that if every stationary subset of $E^{\aleph_2}_{\omega}$ contains a closed copy of $\omega_1$,
then $\diamondsuit^*_{E^{\omega_2}_{\omega_1},E^{\omega_2}_{\omega_0}}(\aleph_2\textup{-trees})$ fails.
\end{enumerate}
\end{remark}

\begin{lemma}\label{dtree} Suppose that $S\s E^\kappa_{>\omega}$ and $B\s\kappa$ are stationary sets. Then:
\begin{enumerate}
\item $\diamondsuit^*_{S,B}(\kappa\textup{-trees})\implies\diamondsuit_{S,B}(\kappa\textup{-trees})\implies\diamondsuit(B)\implies\diamondsuit(\kappa)$;
\item If $\diamondsuit^*_S(\kappa\textup{-trees})$ holds, then $\{ \alpha\in S\mid \cf(|\alpha|)\neq|\alpha|\}$
is nonstationary;
\item If $\diamondsuit_S(\kappa\textup{-trees})$ holds, then $\kappa$ is either a Mahlo cardinal or the successor of a regular uncountable cardinal.
\end{enumerate}
\end{lemma}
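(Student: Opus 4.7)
For (1), the first implication is immediate since clubs are stationary. For the second, given any $g:\kappa\to\kappa$, I apply $\diamondsuit_{S,B}(\kappa\textup{-trees})$ to the chain tree $T:=\{g\restriction\alpha\mid\alpha<\kappa\}$ (with singleton levels $T_\alpha=\{g\restriction\alpha\}$) to obtain stationarily many $\alpha\in S$ for which $\{\beta\in B\cap\alpha\mid g\restriction\beta=f_\beta\}$ is stationary in $\alpha$. Given any club $D\s\kappa$, I pick such an $\alpha$ that is also a limit point of $D$; since $D\cap\alpha$ is then club in $\alpha$, it meets the stationary guessing set, producing $\beta\in D\cap B$ with $g\restriction\beta=f_\beta$, thereby witnessing $\diamondsuit(B)$. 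The last implication is trivial: extend the $\diamondsuit(B)$-sequence to a $\diamondsuit(\kappa)$-sequence by setting $f_\beta:=\emptyset$ for $\beta\notin B$.

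For (2), I argue by contradiction. Suppose $\vec f=\langle f_\beta\mid\beta<\kappa\rangle$ witnesses $\diamondsuit^*_S(\kappa\textup{-trees})$ yet $S_0:=\{\alpha\in S\mid|\alpha|\text{ singular}\}$ is stationary. For each $\alpha\in S_0$, since $|\alpha|$ is singular while $\cf(\alpha)$ is regular with $\cf(\alpha)\leq|\alpha|$, we have $\mu_\alpha:=\cf(\alpha)<|\alpha|$. Call $g:\alpha\to H_\kappa$ \emph{reachable} (from $\vec f\restriction\alpha$) if $\{\beta<\alpha\mid g\restriction\beta=f_\beta\}$ is stationary in $\alpha$; the basic counting bound is that there are at most $|\alpha|^{\mu_\alpha}$ reachable functions, since each arises as the cofinal union of $\vec f\restriction\alpha$ along some cofinal $\mu_\alpha$-subsequence. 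After Fodor-stabilizing $\mu_\alpha$, and where possible $|\alpha|$, on a stationary $S_1\s S_0$, the task is to construct a streamlined $\kappa$-tree $T$ whose level $T_\alpha$ at each $\alpha\in S_1$ contains an \emph{unreachable} function $g^\ast_\alpha$; the existence of such $g^\ast_\alpha$ follows from K{\H o}nig's $|\alpha|^{\cf(|\alpha|)}>|\alpha|$ combined with a direct diagonalization against the reachable candidates. The main obstacle is arranging the $g^\ast_\alpha$'s coherently into a $\kappa$-tree with every level of size $<\kappa$, which I would handle by a branch-splicing construction reusing a common skeleton below each critical level.

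For (3), a similar but lighter diagonalization works, since the $\diamondsuit_S$-hypothesis only demands defeat on a stationary rather than a club set of levels. If $\kappa$ is neither Mahlo nor a successor of a regular cardinal, then either $\kappa=\lambda^+$ with $\lambda$ singular, or $\kappa$ is inaccessible with the regular cardinals below $\kappa$ forming a nonstationary set. Combined with $S\s E^\kappa_{>\omega}$, in both cases there is a club $D\s\kappa$ on which $\omega<\cf(\alpha)<|\alpha|$, so the diagonalization of (2) applies at each $\alpha\in D\cap S$. The bookkeeping is substantially easier here, since the tree only needs to fail at stationarily many $\alpha$'s, so the $g^\ast_\alpha$'s can be chosen along a sparse subsequence of $S$.
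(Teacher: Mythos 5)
Your Clause (1) is correct and is essentially the paper's argument (chain tree plus reflection at a limit point of the given club). The gaps are in Clauses (2) and (3). In (2), the step you defer to ``K{\H o}nig plus a direct diagonalization against the reachable candidates'' is exactly the missing content, and as stated it cannot work by counting alone: your own bound allows up to $|\alpha|^{\cf(\alpha)}$ reachable functions, and this quantity can be $\geq\kappa$ (e.g.\ $\kappa=\lambda^+$ with $|\alpha|=\lambda$ singular gives $\lambda^{\cf(\lambda)}\geq\lambda^+$), whereas every level of a streamlined $\kappa$-tree has size $<\kappa$ and the pool of $\alpha$-branches actually available to be sealed at level $\alpha$ of any reasonably small tree is tiny (for the finite-support subtree of ${}^{<\kappa}2$ there are only $|\alpha|$ of them when $\cf(\alpha)>\omega$). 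So for all you have shown, every available branch could be reachable; K{\H o}nig's inequality $|\alpha|^{\cf(|\alpha|)}>|\alpha|$ produces many functions, but they cannot all sit on one level of a $\kappa$-tree. What is needed is a structural argument, and this is what the paper does: it applies the hypothesis to the single concrete tree $T$ of finite-support elements of ${}^{<\kappa}2$, and at a bad $\alpha$ fixes a club $C_\alpha\subseteq\alpha$ of order type $\cf(\alpha)$ whose minimum exceeds $\nu^+$ for a suitable $\nu$ with $\cf(\alpha)\le\nu<|\alpha|$; the $\nu^+$-many members of $T_\alpha$ supported below $\min(C_\alpha)$ are pairwise distinct below every point of $C_\alpha$, so $f\mapsto\min\{\beta\in C_\alpha\mid f\restriction\beta=f_\beta\}$ would inject a set of size $\nu^+$ into a set of size $\cf(\alpha)\le\nu$, a contradiction. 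In particular no auxiliary tree has to be built, so the ``branch-splicing, common skeleton'' coherence problem you flag (and do not solve) never arises.

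In (3) the reduction is backwards. To refute $\diamondsuit_S(\kappa\textup{-trees})$ you must produce a tree for which the set of $\alpha\in S$ at which \emph{all} of $T_\alpha$ is guessed is nonstationary, i.e.\ the tree must fail at \emph{every} $\alpha\in S\cap E$ for some club $E$; this is a stronger demand than in (2), not a ``lighter'' one, and defeating the sequence only along a sparse or merely stationary subset of $S$ refutes nothing, since the remaining fully guessed levels could still form a stationary set. The paper instead uses the argument of (2) positively: at every fully guessed $\alpha\in S$ lying in the relevant club one gets $\cf(|\alpha|)=|\alpha|$, so $\diamondsuit_S(\kappa\textup{-trees})$ forces $\{\alpha\in E^\kappa_{>\omega}\mid\cf(|\alpha|)=|\alpha|\}$ to be stationary, which is impossible when $\kappa$ is the successor of a singular cardinal or a non-Mahlo regular limit; the case $\kappa=\aleph_1$ (which your dichotomy omits) is excluded outright by the requirement that $S\subseteq E^\kappa_{>\omega}$ be stationary. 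If you want to keep your contrapositive-constructive strategy for (3), you would have to build a single tree with unguessed nodes at club-relatively-many $\alpha\in S$, which is precisely the kind of construction your write-up leaves unexecuted.
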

\begin{proof} (1) The first and third implications are immediate.
For the second one, suppose that $\langle f_\beta\mid \beta\in B\rangle$ witnesses $\diamondsuit_{S,B}(\kappa\textup{-trees})$,
and we shall verify that it witnesses $\diamondsuit(B)$.
\begin{claim} For every $f\in{}^\kappa2$, the set $\{ \beta\in B\mid f\restriction\beta=f_\beta\}$
is stationary in $\kappa$ and it moreover reflects stationarily often in $S$.
\end{claim}
\begin{proof} Given $f\in{}^\kappa2$,
the set $T:=\{ f\restriction\alpha\mid \alpha<\kappa\}$ is a streamlined $\kappa$-tree.
Therefore, the following set is stationary in $\kappa$:
$$S':=\{\alpha\in S\mid \forall t\in T_\alpha\,(\{ \beta\in B\cap\alpha\mid t\restriction\beta=f_\beta\}\text{ is stationary in }\alpha)\}.$$
Denote $B':=\{\beta\in B\mid f\restriction\beta=f_\beta\}$.
Evidently,
$$\{\alpha\in S\mid B'\cap\alpha\text{ is stationary in }\alpha\}=S',$$
so are done.
\end{proof}

(2) Suppose that $\langle f_\beta\mid \beta<\kappa\rangle$ witnesses $\diamondsuit^*_S(\kappa\textup{-trees})$.
Evidently, $T:=\{ f\in{}^{<\kappa}2\mid f^{-1}\{1\}\text{ is finite}\}$
is a streamlined $\kappa$-tree, so let us fix a club $D\s\kappa$ such that,
for every $\alpha\in S\cap D$, for every $f\in T_\alpha$, $\{ \beta<\alpha\mid f\restriction\beta=f_\beta\}$ is stationary in $\alpha$.

If $\kappa$ is a limit cardinal, then let $C:=\{\alpha<\kappa\mid |\alpha|=\alpha\}$,
and if $\kappa$ is a successor cardinal, say, $\kappa=\lambda^+$, then let $C:=\kappa\setminus\lambda$.
In both cases, $C$ is a club in $\kappa$.
We claim that $S\cap D\cap C\s\{ \alpha<\kappa\mid \cf(|\alpha|)=|\alpha|\}$.

Towards a contradiction, suppose that $\alpha\in S\cap D\cap C$, and yet $|\alpha|$ is a singular cardinal.
Denote $\theta:=\cf(|\alpha|)$. As $\alpha\in C$, we may fix a club $C_\alpha$ in $\alpha$ of order-type $\theta$ such that $\min(C_\alpha)=\theta^+$.
Now, the collection $\mathcal F:=\{ f\in T_\alpha\mid f^{-1}\{1\}\s\theta^+\}$ has size $\theta^+$,
and for each $f\in\mathcal F$,
$G(f):=\{\beta\in C_\alpha\mid f\restriction\beta=f_\beta\}$ is stationary in $\alpha$.
It follows that the map $f\mapsto\min(G(f))$ forms an injection from $\mathcal F$ to $C_\alpha$. This is a contradiction.

(3) The proof of Clause~(2) makes it clear that if $\diamondsuit_S(\kappa\textup{-trees})$ holds, then $\{ \alpha\in E^\kappa_{>\omega}\mid \cf(|\alpha|)=|\alpha|\}$ must be stationary in $\kappa$.
\end{proof}

\begin{defn}[Jensen-Kunen, \cite{jensen1969some}] A cardinal $\kappa$ is \emph{subtle} iff for every sequence $\langle A_\beta\mid\beta\in D\rangle$ over a club $D\s\kappa$,
there is a pair $\beta<\alpha$ of ordinals in $D$ such that $A_\beta\cap\beta=A_\alpha\cap\beta$.
\end{defn}
\begin{prop}\label{prop45} If $\kappa$ is a subtle cardinal,
then it is strongly inaccessible and there exists a stationary $S\s\reg(\kappa)\setminus\{\omega\}$ such that $\diamondsuit^*_S(\kappa\textup{-trees})$ holds.
\end{prop}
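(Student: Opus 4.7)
The proof separates into two parts: establishing strong inaccessibility and constructing the sequence that witnesses $\diamondsuit^*_S(\kappa\textup{-trees})$. Strong inaccessibility is standard for subtle cardinals. For regularity: if $\cf(\kappa) = \mu < \kappa$, I would fix a cofinal sequence $\langle \kappa_i \mid i < \mu \rangle$ in $\kappa \setminus (\mu+1)$ and set $A_\beta := \{\kappa_i \mid i < \mu, \kappa_i < \beta\}$ on a club; for any pair $\beta < \alpha$ the set $A_\alpha \cap \beta$ is bounded in $\beta$ while $A_\beta$ is cofinal, contradicting subtlety. For strong-limit: if $2^\mu \ge \kappa$, fix an injection $\iota : \kappa \setminus \mu \to \mathcal{P}(\mu)$ and set $A_\beta := \iota(\beta) \subseteq \mu \subseteq \beta$, so that the subtle conclusion $A_\alpha = A_\beta$ contradicts injectivity. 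A similar argument gives Mahloness, so $\reg(\kappa) \setminus \{\omega\}$ is stationary.

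For the diamond, the plan is to build a single sequence $\vec f = \langle f_\beta \mid \beta < \kappa\rangle$ with $f_\beta : \beta \to H_\beta$ such that $S := \{\alpha \in \reg(\kappa) \setminus \{\omega\} \mid \vec f \restriction \alpha \text{ witnesses } \diamondsuit(\alpha)\}$ is stationary. Once this is in hand, $\diamondsuit^*_S(\kappa\textup{-trees})$ follows routinely: given a streamlined $\kappa$-tree $T$, the set of $\alpha < \kappa$ at which $T \cap {}^{<\alpha}H_\alpha$ is itself a streamlined $\alpha$-tree with $T_\alpha \subseteq {}^\alpha H_\alpha$ is a club $D_T$, and for every $\alpha \in S \cap D_T$ each $f \in T_\alpha$ is a function $\alpha \to H_\alpha$ that is guessed stationarily often by $\vec f \restriction \alpha$. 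To construct $\vec f$, I would fix a bijection $\pi : \kappa \to H_\kappa$ whose initial segments $\pi \restriction \alpha$ enumerate $H_\alpha$ on a club (achievable by inaccessibility), and read off $f_\beta$ as a canonical function extracted from $\pi \restriction \beta$ via designated slots.

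The main obstacle is proving $S$ is stationary. Toward contradiction, I would assume there is a club $D$ of $\alpha \in \reg(\kappa) \setminus \{\omega\}$ each witnessing that $\vec f \restriction \alpha$ is not a $\diamondsuit(\alpha)$-sequence, and for each such $\alpha$ pick a counterexample $g_\alpha : \alpha \to H_\alpha$ together with a club $E_\alpha \subseteq \alpha$ such that $g_\alpha \restriction \beta \ne f_\beta$ for every $\beta \in E_\alpha$. Diagonalizing over $\alpha$, I would pass to a subclub $D^*$ on which $\beta \in E_\alpha$ whenever $\beta < \alpha$ are both in $D^* \cap \reg(\kappa)$. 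I would then encode each pair $(g_\alpha, E_\alpha)$ as a set $A_\alpha \subseteq \alpha$ via $\pi$, and appeal to subtlety to obtain $\beta < \alpha$ in $D^* \cap \reg(\kappa)$ with $A_\alpha \cap \beta = A_\beta$. The technically delicate step, which I expect to be the heart of the argument, is to arrange the coding and the definition of $f_\beta$ so that the data in $A_\beta$ canonically pins down $f_\beta$ as the function it encodes, forcing $f_\beta = g_\alpha \restriction \beta$ and contradicting $\beta \in E_\alpha$. This self-referential design --- making $\vec f$ recoverable from its $\pi$-coding at each sufficiently closed level --- is what allows subtlety's single-pair conclusion to substitute for the stronger ineffability property one might otherwise expect to need here.
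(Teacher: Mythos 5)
Your high-level architecture (build $\vec f=\langle f_\beta\mid\beta<\kappa\rangle$ so that $\vec f\restriction\alpha$ is a local diamond sequence for stationarily many $\alpha$, then pass to $\diamondsuit^*_S(\kappa\textup{-trees})$ via the club of $\beta$ with $T\cap\pi[\beta]=T\restriction\beta$) is the same as the paper's, and that reduction is indeed routine. The genuine gap is that the heart of the proposition --- the construction of $\vec f$ and the subtlety argument for the stationarity of $S$ --- is never carried out. You fix $\vec f$ in advance (``a canonical function extracted from $\pi\restriction\beta$ via designated slots'') and only later, inside the proof by contradiction, choose counterexamples $g_\alpha$ with clubs $E_\alpha$ and code them as $A_\alpha$. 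Subtlety then yields $A_\alpha\cap\beta=A_\beta$, which, for any level-by-level coding, only says that the counterexample data at $\beta$ is an initial segment of the counterexample data at $\alpha$; it says nothing about $f_\beta$, which was defined beforehand independently of the $g$'s. So the step you flag as ``technically delicate'' (forcing $f_\beta=g_\alpha\restriction\beta$) is not merely delicate: with your setup it cannot be arranged. What is needed is Kunen's self-referential recursion, which is exactly what the paper runs: the guessing sequence itself is defined by recursion, letting $X_\alpha$ \emph{be} a counterexample to $\langle X_\beta\mid\beta<\alpha\rangle$ (when one exists) together with a club $C_\alpha$ of order type $\cf(\alpha)$ disjoint from the set of guessing points; the subtle pair then satisfies $X_\beta=X_\alpha\cap\beta$ and $C_\beta=C_\alpha\cap\beta$, so $\beta\in\acc(C_\alpha)\s C_\alpha$ while $X_\alpha\cap\beta=X_\beta$, an immediate contradiction. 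Your sketch gestures at ``self-referential design'' but does not contain this construction, and it is the entire content of the proof beyond the routine conversion to tree-guessing.

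Two further steps would fail as written. The ``diagonalization'' to a subclub $D^*$ with $\beta\in E_\alpha$ whenever $\beta<\alpha$ lie in $D^*$ is not a legitimate move: in general there is no club $D^*$ with $D^*\cap\alpha\s E_\alpha$ for all $\alpha\in D^*$ when the clubs $E_\alpha\s\alpha$ are handed to you; the correct route (the paper's) is to code the club into $A_\alpha$ and read $\beta\in\acc(E_\alpha)\s E_\alpha$ off the coherence supplied by subtlety. Relatedly, subtlety produces a coherent pair somewhere in the club, not in $\reg(\kappa)$; the paper forces the upper point to have uncountable cofinality by arranging $\otp(C_\alpha)=\cf(\alpha)$ (coherence then gives $\cf(\alpha)>\cf(\beta)\ge\omega$) and only at the very end shrinks $S$ to regular cardinals via Lemma~\ref{dtree}(2), whereas you simply assert the pair lands in $D^*\cap\reg(\kappa)$. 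Finally, a small point: your regularity argument is wrong as stated --- with $A_\beta:=\{\kappa_i\mid \kappa_i<\beta\}$ one has $A_\alpha\cap\beta=A_\beta$ for \emph{every} pair, so nothing contradicts subtlety (the paper sidesteps this by its blanket convention that $\kappa$ is regular); your strong-limit argument, by contrast, is fine and is essentially the one the paper uses.
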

\begin{proof} By our convention, $\kappa$ is regular, though it is anyway easy to show that a subtle cardinal must be regular.
Also, if there exists a cardinal $\lambda<\kappa$ such that $2^\lambda\ge\kappa$,
then by taking an injective sequence $\langle A_\beta\mid\beta<\kappa\rangle$ of cofinal subsets of $\lambda$,
there is no pair $\beta<\alpha$ of ordinals in the club $\kappa\setminus\lambda$ such that $A_\beta=A_\alpha\cap\beta$.
So $\kappa$ is a strong limit.

Now, for the diamond, we just run the same proof of Kunen's theorem that the usual $\diamondsuit$ holds at a subtle cardinal.
Suppose that $\kappa$ is subtle. Define a sequence of pairs $\sigma=\langle (X_\alpha,C_\alpha)\mid \alpha<\kappa\rangle$ by recursion on $\alpha$.
For every $\alpha<\kappa$ such that $\sigma\restriction\alpha$ has already been defined, there are two options:

$\br$ If there exists some $Y\s\alpha$ such that $\{ \beta<\alpha\mid Y\cap\beta=X_\beta\}$ is nonstationary,
then pick a set $X_\alpha\s\alpha$ and a club $C_\alpha\s\alpha$ such that $\{ \beta<\alpha\mid X_\alpha\cap\beta=X_\beta\}\cap C_\alpha=\emptyset$,
and $\otp(C_\alpha)=\cf(\alpha)$.

$\br$ Otherwise, let $X_\alpha=C_\alpha$ be some club in $\alpha$ of order-type $\cf(\alpha)$.

\begin{claim} There exists a stationary $S\s E^\kappa_{>\omega}$ such that, for every $\alpha\in S$ and every $Y\s\alpha$,
$\{ \beta<\alpha\mid X_\beta=Y\cap\beta\}$ is stationary in $\alpha$.
\end{claim}
\begin{proof} Suppose not. Fix a club $D\s\acc(\kappa)$ such that, for every $\alpha\in D$,
either $\cf(\alpha)\le\omega$ or,
for some $Y_\alpha\s\alpha$, the set $\{ \beta<\alpha\mid X_\beta=Y_\alpha\cap\beta\}$ is nonstationary in $\alpha$.
Now, since $\kappa$ is subtle, we may find a pair $\beta<\alpha$ of ordinals in $D$ such that $X_\beta=X_\alpha\cap\beta$ and $C_\beta=C_\alpha\cap\beta$.
As $\cf(\alpha)=\otp(C_\alpha)>\otp(C_\alpha\cap\beta)=\otp(C_\beta)=\cf(\beta)$, we infer that $\cf(\alpha)>\omega$.
As $\alpha\in D\cap E^\kappa_{>\omega}$, the set $Y_\alpha$ witnesses that $X_\alpha$ and $C_\alpha$ were chosen in such a way that
$\{ \beta<\alpha\mid X_\alpha\cap\beta=X_\beta\}\cap C_\alpha=\emptyset$. However, from $C_\beta\s C_\alpha$, we infer that $\beta\in\acc(C_\alpha)\s C_\alpha$,
contradicting the fact that $X_\alpha\cap\beta=X_\beta$.
\end{proof}

Let $S$ be given by the preceding claim.
Fix a bijection $\pi:\kappa\leftrightarrow H_\kappa$ and set $f_\beta:=\bigcup\pi[X_\beta]$ for all $\beta<\kappa$.
\begin{claim} $\langle f_\beta\mid\beta<\kappa\rangle$ witnesses that $\diamondsuit^*_S(\kappa\textup{-trees})$ holds.
\end{claim}
\begin{proof}  Given a streamlined $\kappa$-tree $T$, consider the club $C:=\{\beta<\kappa\mid T\cap\pi[\beta]=T\restriction\beta\}$.
We claim that the club $D:=\acc(C)$ is as sought.
To this end, let $\alpha\in S\cap D$ and $f\in T_\alpha$.
As $\alpha$ is in particular in $C$,  $$Y:=\pi^{-1}[\{f\restriction\gamma\mid \gamma<\alpha\}]$$ is a subset of $\alpha$,
and hence $B:=\{ \beta\in C\cap\alpha\mid X_\beta=Y\cap\beta\}$ is in $\alpha$.
For every $\beta\in B$, it is the case that $X_\beta=Y\cap\beta=\pi^{-1}[\{f\restriction\gamma\mid \gamma<\beta\}]$,
and hence $f_\beta=\bigcup\pi[X_\beta]=\bigcup\{f\restriction\gamma\mid \gamma<\beta\}=f\restriction\beta$.
\end{proof}
By Lemma~\ref{dtree}(2) and by possibly shrinking $S$, we may assume that $S$ consists of regular cardinals.
Thus, $S\s\reg(\kappa)\setminus\{\omega\}$ and we are done.
\end{proof}

A famous weakening of Jensen's diamond principle is Ostaszewski's club principle.

\begin{defn}[Ostaszewski]\label{def-club}
For a subset $B\s\kappa$,
$\clubsuit(B)$ asserts the existence of a sequence
$\langle X_\beta\mid \beta\in B \rangle$ such that:
\begin{enumerate}
\item for every $\beta\in B$, $X_\beta\s\beta$ with $\sup(X_\beta)=\sup(\beta)$;
\item for every cofinal $X\s \kappa$, the set $\{ \beta \in B \mid X_\beta\s X \}$
is stationary in $\kappa$.
\end{enumerate}
\end{defn}

The next proposition establishes that $\diamondsuit^*_S(\kappa\textup{-trees})$ may hold at $\kappa$ a successor of a regular cardinal.
This includes cardinals as small as $\aleph_2$, as well as the successors of large cardinals.

\begin{prop}\label{cluba} Suppose that $\kappa=\lambda^+$ for a given regular uncountable $\lambda$,
and that  $\square^B_\lambda$ and $\ch_\lambda$ both hold.
Denote $S:=E^\kappa_\lambda$.
\begin{enumerate}
\item If $\clubsuit(\lambda)$ holds (e.g., if $\lambda$ is subtle), then so does $\diamondsuit^*_S(\kappa\textup{-trees})$;
\item If $\clubsuit(E^\lambda_\theta)$ holds for a given $\theta\in\reg(\lambda)$, then so does $\diamondsuit^*_{S,E^\kappa_\theta}(\kappa\textup{-trees})$.
\end{enumerate}
\end{prop}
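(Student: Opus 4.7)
We aim to construct the desired $\diamondsuit^*_{S,B}(\kappa\textup{-trees})$-sequence by combining the three hypotheses. Fix a $\sq_\lambda$-coherent $\lambda$-bounded $C$-sequence $\vec C=\langle C_\alpha\mid\alpha<\kappa\rangle$ from $\square^B_\lambda$; a $\clubsuit(\lambda)$-sequence $\vec X=\langle X_j\mid j<\lambda\rangle$ (respectively, a $\clubsuit(E^\lambda_\theta)$-sequence for Part~(2)); and, using $\ch_\lambda$ (so that $|H_\kappa|=\kappa$ and $\kappa^{<\lambda}=\kappa$), an enumeration $\langle z_\xi\mid\xi<\kappa\rangle$ of all triples $(\iota,Y,h)$ where $\iota<\lambda$ is a limit, $Y\s\iota$ is cofinal, and $h\in{}^\iota H_\kappa$. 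Write $\iota_\beta:=\otp(C_\beta)$ and let $e_\beta:\iota_\beta\to C_\beta$ denote the increasing enumeration. The key structural consequence of $\sq_\lambda$-coherence is that for every $\alpha\in S$ and every $i\in\acc(\lambda)$, setting $\beta:=e_\alpha(i)$ yields $C_\beta=C_\alpha\cap\beta$, so $\iota_\beta=i$ and $e_\beta=e_\alpha\restriction i$. This identifies the local $C$-data at $\beta$ with an initial segment of $\alpha$'s data, uniformly across $\alpha\in S$.

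Next, using $\ch_\lambda$ together with $\sq_\lambda$-coherence, fix a bookkeeping partition $\kappa=\bigsqcup_{\xi<\kappa}R_\xi$ with the property that for every $\alpha\in S$ on a club and every map $\tilde\xi:\lambda\to\kappa$, the set $\{i<\lambda : e_\alpha(i)\in R_{\tilde\xi(i)}\}$ is stationary in $\lambda$; such a partition is obtainable from $\diamondsuit(E^\kappa_{<\lambda})$ (which follows from $\ch_\lambda$ by Shelah) transferred along $\vec C$ via the coherence identifications. Now define $f_\beta$ for $\beta\in B$ by recursion on $\beta$: if $\iota_\beta$ is a limit (of cofinality $\theta$ in Part~(2)), $\beta\in R_\xi$ with $z_\xi=(\iota_\beta,X_{\iota_\beta},h)$ for some $h:\iota_\beta\to H_\kappa$, and for every $j\in X_{\iota_\beta}$ the recursively-defined $f_{e_\beta(j)}$ satisfies $f_{e_\beta(j)}(e_\beta(j'))=h(j')$ for every $j'\in X_{\iota_\beta}\cap j$, then set $f_\beta:=\bigcup_{j\in X_{\iota_\beta}} f_{e_\beta(j)}$, which is a well-defined function on $\beta$ since $e_\beta[X_{\iota_\beta}]$ is cofinal in $\beta$ and the $f_{e_\beta(j)}$'s are pairwise compatible by the agreement clause; otherwise set $f_\beta$ to a fixed default (say constantly $0$).

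To verify the required property, given a streamlined $\kappa$-tree $T$, take $D\s\kappa$ to be the club of $\alpha<\kappa$ at which the coding of $T$ closes. For $\alpha\in S\cap D$ and $f\in T_\alpha$, define $\hat f:\lambda\to H_\kappa$ by $\hat f(i):=f(e_\alpha(i))$ and let $\xi_i<\kappa$ be the index with $z_{\xi_i}=(i,X_i,\hat f\restriction i)$. The partition yields $J:=\{i\in\acc(\lambda):e_\alpha(i)\in R_{\xi_i}\}$ stationary in $\lambda$. Writing $G:=\{\beta<\alpha:f_\beta=f\restriction\beta\}$, a bootstrapping argument (establishing $G$ cofinal via a careful base case) combined with $\clubsuit(\lambda)$ applied to $e_\alpha^{-1}[G]$ gives that $J':=\{i<\lambda:X_i\s e_\alpha^{-1}[G]\}$ is stationary in $\lambda$. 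For $i\in J\cap J'$ and $\beta:=e_\alpha(i)$, all the agreement conditions of the recursion are met, so $f_\beta=\bigcup_{j\in X_i}f_{e_\alpha(j)}=\bigcup_{j\in X_i}f\restriction e_\alpha(j)=f\restriction\beta$ (the last step using the streamlined-tree identity $f\restriction\beta=\bigcup_{\gamma<\beta}f\restriction\gamma$ and cofinality of $e_\alpha[X_i]$ in $\beta$). Pushing $J\cap J'$ forward by $e_\alpha$ yields a stationary subset of $\alpha$ witnessing the diamond property.

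\textbf{Main obstacle.} The chief obstacle is the cascading nature of the recursion: correctness of $f_\beta$ depends on correctness of cofinally many prior $f_{e_\beta(j)}$'s, which in turn depends on still-earlier correctness. Closing this cascade stationarily often requires the bookkeeping partition to have the strong "universal pullback" property asserted above, and requires the initial base case of the recursion to establish cofinality of $G$ in $\alpha$ before $\clubsuit$'s stationary-guessing kicks in. The three hypotheses play complementary roles — $\ch_\lambda$ exhausts the index space of targets, $\sq_\lambda$-coherence transfers structure uniformly from $\lambda$ to each $\alpha\in S$, and $\clubsuit(\lambda)$ propagates correctness stationarily along the cascade — and balancing them in the base case is the most delicate step.
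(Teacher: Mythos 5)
There is a genuine gap, and it sits at the heart of your construction: the ``bookkeeping partition'' with the universal pullback property cannot exist as stated. Fix $\alpha\in S$ and let $c(i)$ be the unique $\xi$ with $e_\alpha(i)\in R_\xi$; taking $\tilde\xi(i)\neq c(i)$ for every $i$ makes $\{i<\lambda:e_\alpha(i)\in R_{\tilde\xi(i)}\}$ empty, so no partition satisfies ``for every map $\tilde\xi:\lambda\to\kappa$ the set is stationary.'' Even the weaker property your verification actually uses --- that for every branch $f$ (equivalently, every $h:\lambda\to H_\kappa$) the induced coloring guesses the code of $(i,X_i,\hat f\restriction i)$ at stationarily many $i<\lambda$ --- is a $\diamondsuit(\lambda)$-type prediction of arbitrary functions on $\lambda$, and that is not available from the hypotheses: $\diamondsuit(\lambda)$ implies $2^{<\lambda}=\lambda$, whereas the proposition only assumes $\clubsuit(\lambda)$ (resp. $\clubsuit(E^\lambda_\theta)$) and $2^\lambda=\lambda^+$. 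Your suggested derivation from $\diamondsuit(E^\kappa_{<\lambda})$ ``transferred along $\vec C$'' is essentially circular: that diamond guesses a given branch at stationarily many $\beta<\kappa$, but you need the set of such $\beta$ to be stationary \emph{below} club-many $\alpha\in S$, and this reflection is exactly the content of $\diamondsuit^*_{S,B}(\kappa\textup{-trees})$ being proved. A second, related gap is the cascade itself: to run the verification you need $X_i\s e_\alpha^{-1}[G]$ with $G=\{\beta<\alpha:f_\beta=f\restriction\beta\}$ cofinal in $\alpha$, and the ``bootstrapping argument with a careful base case'' is precisely the missing step --- the agreement clause in your recursion only checks values on the sparse set $e_\beta[X_{\iota_\beta}]$ and cannot certify that the earlier $f_{e_\beta(j)}$'s are genuine restrictions of $f$.

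The paper's proof shows how to avoid both problems by making the definition of $f_\beta$ non-recursive and by never predicting branch data at $\lambda$. Using $\ch_\lambda$ one fixes a bijection $\pi:\kappa\leftrightarrow H_\kappa$, so that ordinals below $\alpha$ themselves code nodes of the tree; using the square sequence one fixes coherent injections $\varphi_\alpha:\alpha\to\lambda$ with $\varphi_\beta=\varphi_\alpha\restriction\beta$ whenever $C_\beta\sq C_\alpha$ and $\sup(C_\beta)=\beta$; and one simply sets $f_\beta:=\bigcup\{\pi(i)\mid i<\beta,\ \varphi_\beta(i)\in X_{\otp(C_\beta)}\}$. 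In the verification, for $f\in T_\alpha$ the set $Y$ of $\pi$-codes of initial segments of $f$ is automatically cofinal in $\alpha$ (no base case needed), the Dushnik--Miller theorem yields a cofinal $Y'\s Y$ on which $\varphi_\alpha$ is increasing, and then $\clubsuit$ at $\lambda$ only needs to guess a cofinal subset of $\varphi_\alpha[Y']$ --- a task club-guessing can perform, unlike the $\diamondsuit(\lambda)$-style prediction your route requires. So while your structural use of $\sq_\lambda$-coherence (that $C_\beta=C_\alpha\cap\beta$ and $e_\beta=e_\alpha\restriction i$ for $\beta=e_\alpha(i)\in\acc(C_\alpha)$, $\alpha\in E^\kappa_\lambda$) is correct and does match the paper, the prediction mechanism you build on it is both too strong to exist in the stated form and, in its needed weakening, stronger than the hypotheses allow.
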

\begin{proof} We settle for proving Clause~(2).
So, suppose that $\langle X_\beta\mid \beta\in E^\lambda_\theta\rangle$ witnesses $\clubsuit(E^\lambda_\theta)$,
for a given $\theta\in\reg(\lambda)$,
and we shall prove that $\diamondsuit^*_{S,B}(\kappa\textup{-trees})$ holds for $B:=E^\kappa_\theta$.
Using $\ch_\lambda$, fix a bijection $\pi:\kappa\leftrightarrow H_\kappa$.
Let $\langle C_\alpha\mid\alpha<\kappa\rangle$ be a $\lambda$-bounded $\sq_\lambda$-coherent $C$-sequence over $\kappa$.
By a standard argument (see \cite[Claim~3.20.1]{paper32}), we may fix a sequence of injections $\langle \varphi_\alpha:\alpha\rightarrow\lambda\mid\alpha<\kappa\rangle$
such that, for all $\beta<\alpha<\kappa$, if $C_\beta\sq C_\alpha$ and $\sup(C_\beta)=\beta$, then $\varphi_{\beta}=\varphi_\alpha\restriction\beta$.
Finally, for every $\beta\in B$, since it is the case that $\otp(C_\beta)\in E^\lambda_\theta$, we may let
\begin{itemize}
\item $I_\beta:=\{i<\beta\mid \varphi_\beta(i)\in X_{\otp(C_\beta)}\}$, and
\item $f_\beta:=\bigcup\{\pi(i)\mid i\in I_\beta\}$.
\end{itemize}

To see that $\langle f_\beta\mid \beta\in B\rangle$ forms a $\diamondsuit^*_{S,B}(\kappa\textup{-trees})$-sequence,
let $T$ be a given streamlined $\kappa$-tree.
Consider the club $$C:=\{ \gamma<\kappa\mid T\cap\pi[\gamma]=T\restriction\gamma\}.$$
We claim that the club $D:=\acc(C)$ is as required by Definition~\ref{def41}.
To this end, let  $\alpha\in S\cap D$ and $f\in T_\alpha$.
For each $\gamma\in C\cap\alpha$, it is the case that $\gamma\le\pi^{-1}(f\restriction\gamma)<\min(C\setminus(\gamma+1))$.
It thus follows that the following set is cofinal in $\alpha$:
$$Y:=\{ \pi^{-1}(f\restriction\gamma)\mid \gamma\in C\cap\alpha\}.$$
Furthermore, for every $i\in Y$, $$i=\pi^{-1}(f\restriction(\max(C\cap(i+1)))).$$

Since $\varphi_\alpha$ is an injection, by the Dushnik-Miller theorem, we may find a cofinal subset $Y'$ of $Y$ on which the map $i\mapsto \varphi_\alpha(i)$ is strictly increasing.
In particular, $\otp(Y')=\lambda$ and
the set $X:=\varphi_\alpha[Y']$ is cofinal in $\lambda$.
It follows that $e:=\{\eta\in E^\lambda_\theta\mid X_\eta\s X\}$ is stationary in $\lambda$,
and hence the following set is stationary in $\alpha$:
$$B^*:=\{ \beta\in B\cap\acc(C_\alpha)\mid \sup(Y'\cap\beta)=\beta\ \&\ \sup(\varphi_\alpha[Y'\cap\beta])=\otp(C_\alpha\cap\beta)\in e\}.$$
Let $\beta\in B^*$.
From $\otp(C_\alpha)=\lambda$,
we get that $C_\beta=C_\alpha\cap\beta$ and $\varphi_\beta=\varphi_\alpha\restriction\beta$.
Denote $Y_\beta:=Y'\cap\beta$, $\psi_\beta:=\varphi_\beta\restriction Y_\beta$, and $\eta:=\otp(C_\beta)$.
Then $\psi_\beta$ is a strictly increasing map from a cofinal subset of $\beta$ to a cofinal subset of $\eta$,
and $\eta\in e$.

For each $i\in I_\beta$, we have that $i<\beta$ and $\varphi_\beta(i)\in X_\eta\s X\s\varphi_\alpha[Y']$
and hence $I_\beta\s Y_\beta=\dom(\psi_\beta)$. So $\psi_\beta\restriction I_\beta$ is an order-preserving bijection from $I_\beta$ to $X_\eta$.
As $\sup(X_\eta)=\eta=\sup(\im(\psi_\beta))$, it follows that $\sup(I_\beta)=\sup(\dom(\psi_\beta))=\beta$.

As $I_\beta\s Y$ and $\sup(I_\beta)=\beta$, we altogether get that
\begin{align*}
f_\beta&=\bigcup\{\pi(i)\mid i\in I_\beta\}\\
&=\bigcup\{\pi(\pi^{-1}(f\restriction(\max(C\cap(i+1)))))\mid i\in I_\beta\}\\
&=\bigcup\{f\restriction(\max(C\cap(i+1)))\mid i\in I_\beta\},\\
&=f\restriction\beta,
\end{align*}
as sought.
\end{proof}

\begin{cor} For every uncountable cardinal $\mu$,
if $\square^B_{\mu^+}$, $\ch_\mu$ and $\ch_{\mu^+}$ all hold, then so does
$\diamondsuit^*_S(\mu^{++}\textup{-trees})$, with $S:=E^{\mu^{++}}_{\mu^+}$.
\end{cor}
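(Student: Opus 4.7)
The plan is to deduce this directly from Proposition~\ref{cluba}(1) applied with $\lambda:=\mu^+$. Since $\mu$ is uncountable, $\lambda$ is a regular uncountable cardinal and $\kappa:=\lambda^+=\mu^{++}$. The hypotheses $\square^B_{\mu^+}$ and $\ch_{\mu^+}$ are literally $\square^B_\lambda$ and $\ch_\lambda$; and the target conclusion $\diamondsuit^*_{E^{\mu^{++}}_{\mu^+}}(\mu^{++}\textup{-trees})$ reads as $\diamondsuit^*_{E^\kappa_\lambda}(\kappa\textup{-trees})$, which is exactly what Proposition~\ref{cluba}(1) delivers. So the only missing input needed to invoke the proposition is $\clubsuit(\lambda)$, i.e., $\clubsuit(\mu^+)$.

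To secure $\clubsuit(\mu^+)$, I would appeal to Shelah's ZFC diamond theorem: for every uncountable cardinal $\mu$, $2^\mu=\mu^+$ implies $\diamondsuit(\mu^+)$. Under the hypothesis $\ch_\mu$ we have $2^\mu=\mu^+$, so $\diamondsuit(\mu^+)$ holds, and this trivially entails $\clubsuit(\mu^+)$. Feeding this into Proposition~\ref{cluba}(1) closes the argument. The only non-trivial ingredient is Shelah's diamond theorem; everything else is a transparent matching of hypotheses, and there is no real obstacle to foresee.
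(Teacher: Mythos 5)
Your proposal is correct and follows essentially the same route as the paper: invoke Shelah's theorem from \cite{Sh:922} to get $\diamondsuit(\mu^+)$ from $\ch_\mu$ (hence $\clubsuit(\mu^+)$), and then apply Proposition~\ref{cluba}(1) with $\lambda:=\mu^+$. Nothing is missing.
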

\begin{proof} By the main result of \cite{Sh:922},
for every uncountable cardinal $\mu$, $\ch_\mu$ implies $\diamondsuit(\mu^+)$. Now, appeal to Proposition~\ref{cluba}(1) with $\lambda:=\mu^+$.
\end{proof}

We close this section by remarking that the proof of Proposition~\ref{cluba} works equally well with partial squares instead of $\square^B_\lambda$.
So, when combined with \cite[Theorem~2.1]{MR2833150},
we get that if
$\clubsuit(\lambda)$ and $\ch_\lambda$ both hold for a regular uncountable cardinal $\lambda$
such that $\kappa:=\lambda^+$ is not greatly Mahlo in $\mathsf{L}$,
then $\diamondsuit^*_S(\kappa\textup{-trees})$ holds
for some stationary $S\s E^\kappa_\lambda$.

\section{Full Souslin trees at strongly inaccessibles}\label{inacc}

\begin{thm}\label{thm43}
Suppose that:
\begin{itemize}
\item $\kappa$ is a strongly inaccessible cardinal;
\item $S\s E^\kappa_{>\omega}$ is stationary, and $\diamondsuit^*_S(\kappa\textup{-trees})$ holds;
\item  $\p^-(\kappa,2,{\sq^*},1,\{S\})$ holds.
\end{itemize}

Then there exists a streamlined, normal, prolific full $\kappa$-Souslin tree.
\end{thm}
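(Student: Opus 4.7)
The plan is to construct a streamlined, normal, prolific $\kappa$-tree $T\s{}^{<\kappa}H_\kappa$ by recursion on levels, maintaining fullness by excluding at each level $\alpha\in S$ a single carefully-chosen branch $b^\alpha$ from $T_\alpha$. Fix a $\p^-(\kappa,2,{\sq^*},1,\{S\})$-sequence $\vec C=\langle C_\alpha\mid\alpha<\kappa\rangle$ and a $\diamondsuit^*_S(\kappa\textup{-trees})$-sequence $\vec f=\langle f_\beta\mid\beta<\kappa\rangle$. Set $T_0:=\{\emptyset\}$, and at successor steps take the prolific extension $T_{\alpha+1}:=\{t{}^\smallfrown\langle i\rangle\mid t\in T_\alpha,\,i<\max\{\omega,\alpha\}\}$. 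For $\alpha\in\acc(\kappa)\setminus S$, let $T_\alpha:=\mathcal B(T\restriction\alpha)$, for which fullness is trivially satisfied. For $\alpha\in S$, we recursively construct a branch $b^\alpha\in\mathcal B(T\restriction\alpha)$ along the enumeration of $C_\alpha$: at each $\gamma\in\nacc(C_\alpha)$, extend $b^\alpha\restriction\sup(C_\alpha\cap\gamma)$ to level $\gamma$ by following the prescription $f_\gamma$ whenever $f_\gamma$ is consistent with the branch built so far and lies in $T$, and otherwise take a canonical default extension; at limit stages take unions. Finally, set $T_\alpha:=\mathcal B(T\restriction\alpha)\setminus\{b^\alpha\}$. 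Since $\kappa$ is strongly inaccessible, $|\mathcal B(T\restriction\alpha)|\le 2^{|T\restriction\alpha|}<\kappa$, so all levels remain of size less than $\kappa$.

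The first verification is that $b^\alpha$ is indeed a branch of $T\restriction\alpha$, i.e., $b^\alpha\restriction\beta\in T$ for every $\beta<\alpha$. The critical case is $\beta\in\acc(C_\alpha)\cap S$, where one must show $b^\alpha\restriction\beta\neq b^\beta$ so that $b^\alpha\restriction\beta\in T_\beta$. Here we invoke the $\sq^*$-coherence of $\vec C$: there exists $\varepsilon<\beta$ with $C_\beta\setminus\varepsilon=(C_\alpha\cap\beta)\setminus\varepsilon$, so the parallel recursive recipes for $b^\beta$ and $b^\alpha\restriction\beta$ see the same prescriptions at stages above $\varepsilon$ but begin from different seed data on the bounded initial portion, and hence the two functions necessarily disagree at some level below $\varepsilon$. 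With this inductive invariant established, $T$ is a streamlined, normal, prolific $\kappa$-tree, and the fullness bound $|\mathcal B(T\restriction\alpha)\setminus T_\alpha|\le 1$ is immediate from the construction.

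The Souslinity argument then proceeds in two parts, each locating stationarily many $\alpha\in S$ at which $b^\alpha$ is forced to coincide with a specific ``obstruction branch''. For a putative $\kappa$-branch $f:\kappa\to H_\kappa$ through $T$, the $\diamondsuit^*_S(\kappa\textup{-trees})$ sequence, applied to $T$ itself via $f\restriction\alpha\in T_\alpha$, guarantees (using Lemma~\ref{dtree}(1) and the claim in its proof) that $B^f:=\{\beta<\kappa\mid f\restriction\beta=f_\beta\}$ is cofinal in $\kappa$. Feeding $B^f$ into $\p^-(\kappa,2,{\sq^*},1,\{S\})$ yields stationarily many $\alpha\in S$ at which $\nacc(C_\alpha)\cap B^f$ is cofinal in $\alpha$; we aim to design the recipe so that at such $\alpha$, the cofinal agreement $f_\gamma=f\restriction\gamma$ propagates through the recursion to force $b^\alpha=f\restriction\alpha$, contradicting $f\restriction\alpha\in T_\alpha$. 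For a putative $\kappa$-sized maximal antichain $A\s T$, the proxy applied to $B^A:=\{\h(a)\mid a\in A\}$ yields stationarily many $\alpha\in S$ with $\sup(\nacc(C_\alpha)\cap B^A)=\alpha$ and such that $A\cap(T\restriction\alpha)$ is maximal in $T\restriction\alpha$. We aim to show that at such $\alpha$, any branch of $T\restriction\alpha$ failing to extend an element of $A\cap(T\restriction\alpha)$ must coincide with $b^\alpha$; then every node in $T_\alpha=\mathcal B(T\restriction\alpha)\setminus\{b^\alpha\}$ extends an antichain element, forcing $A\s T\restriction\alpha$ and contradicting $|A|=\kappa$.

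The main obstacle is the technical engineering of the recipe for $b^\alpha$: the single exclusion per level of $S$ must simultaneously perform both Souslinity roles. That is, the recipe must absorb any would-be $\kappa$-branch via the diamond guesses at $\nacc(C_\alpha)$, while also coinciding with the unique antichain-avoiding branch via the proxy's guessing of $B^A$ through $\nacc(C_\alpha)$. Arranging these two matchings each to hold stationarily, while respecting the $\sq^*$-coherence needed to render $b^\alpha$ a genuine element of $\mathcal B(T\restriction\alpha)$, and using only the node-predicting guesses that $\diamondsuit^*_S(\kappa\textup{-trees})$ provides (rather than a full diamond on $H_\kappa$), is where the crux of the combinatorial work resides.
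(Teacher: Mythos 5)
There is a genuine gap, in two places, and they are exactly the two points your last paragraph defers. First, the coherence step: you must show $b^\alpha\restriction\beta\in T_\beta$, i.e.\ $b^\alpha\restriction\beta\neq b^\beta$, for $\beta\in\acc(C_\alpha)\cap S$, and your argument (``different seed data below $\varepsilon$, hence the two functions necessarily disagree'') has no basis. A canonical recipe run along two clubs that agree on a tail can perfectly well \emph{merge}: under your rule, once both partial branches are compatible with a single guess $f_\gamma$ at a common stage $\gamma$, both recursions jump onto $f_\gamma$ and coincide from then on, giving $b^\alpha\restriction\beta=b^\beta\notin T_\beta$, at which point the recursion for $b^\alpha$ cannot continue past $\beta$. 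The paper sidesteps this by inverting the roles: the branches built along $C_\alpha$ (one $\mathbf b_x^\alpha$ for \emph{every} $x\in T\restriction C_\alpha$) are all promised \emph{into} $T_\alpha$ --- which also secures normality --- and the unique branch that may be omitted at level $\alpha$ is the tree-diamond guess $f_\alpha$ itself; $\sq^*$-coherence is then used in the benign direction, to show $b_x^\alpha(\beta)\in T_\beta$ at $\beta\in\acc(C_\alpha)$.

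Second, you have no antichain-sealing mechanism. At limit levels outside $S$ you admit all of $\mathcal B(T\restriction\alpha)$, and at levels in $S$ all but one branch, so nothing forces the nodes of $T_\alpha$ to extend elements of a given maximal antichain $A$; hoping that every $A$-avoiding branch coincides with the single excluded $b^\alpha$ is not something the construction delivers. The paper's engine is different: since $\diamondsuit^*_S(\kappa\text{-trees})$ implies $\diamondsuit(\kappa)$ (Lemma~\ref{dtree}(1)), one also fixes a $\diamondsuit^-(H_\kappa)$-sequence $\langle\Omega_\beta\mid\beta<\kappa\rangle$ --- the device you explicitly decline to use --- and each $\mathbf b_x^\alpha$ is built so that at $\beta\in\nacc(C_\alpha)$ it passes through the set $Q^{\alpha,\beta}_x$, thereby sealing the guessed antichain $\Omega_\beta$. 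The tree-diamond is then used not to steer the excluded branch but to prove the key structural fact (Claim~\ref{claim432}): for $\alpha\in S\cap D$, \emph{every} node of $T_\alpha$ is of the form $\mathbf b_x^\alpha$ (because $\rho\in T_\alpha$ is guessed stationarily often below $\alpha$, each such guess survived into the tree and is therefore canonical at its level, and $\sq^*$-coherence plus Fodor, using $\cf(\alpha)>\omega$, lifts this to $\alpha$). Combined with the proxy's guessing of the stationary set of $\beta$ where $\Omega_\beta=A\cap(T\restriction\beta)$ is a maximal antichain in $T\restriction\beta$, this yields $A\s T\restriction\alpha$. Your separate plan for killing $\kappa$-branches is also unsound as stated (between consecutive stages of $\nacc(C_\alpha)\cap B^f$ the default steps can leave $f$, after which later guesses are inconsistent and ignored), but it is unnecessary: a splitting $\kappa$-tree with no $\kappa$-sized antichain has no $\kappa$-branch, which is how the paper disposes of that case.
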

\begin{proof}
Let $\langle f_\beta\mid\beta<\kappa\rangle$ be a witness for $\diamondsuit^*_S(\kappa\textup{-trees})$.
Using Lemma~\ref{dtree}(1) and Fact~\ref{def_Diamond_H_kappa}, we may also fix a sequence $\langle \Omega_\beta\mid\beta<\kappa\rangle$ witnessing $\diamondsuit^-(H_\kappa)$.
Fix a sequence $\vec C=\langle C_\alpha\mid \alpha<\kappa\rangle$ witnessing $\p^-(\kappa,2,{\sq^*},1,\{S\})$,
and fix a well-ordering $\vartriangleleft$ of $H_\kappa$.

Following the proof of \cite[Proposition~2.2]{paper26}, we shall recursively construct a sequence $\langle T_\alpha\mid \alpha<\kappa\rangle$ such that
$T:=\bigcup_{\alpha<\kappa}T_\alpha$ will constitute a normal prolific full streamlined $\kappa$-Souslin tree whose $\alpha^{\text{th}}$-level is $T_\alpha$.

Let $T_0:=\{\emptyset\}$, and for all $\alpha<\kappa$ let $$T_{\alpha+1}:=\{t{}^\smallfrown\langle i\rangle\mid t\in T_\alpha, i<\max\{\omega,\alpha\}\}.$$
Next, suppose that $\alpha\in\acc(\kappa)$ is such that $T\restriction\alpha$ has already been defined.
Constructing the level $T_\alpha$ involves deciding which branch through $T\restriction\alpha$ (if any) will \emph{not} have its limit placed into the (to-be-full) tree.
To ensure that the tree is normal, we attach to any node $x\in T\restriction C_\alpha$ some node $\mathbf{b}_x^\alpha\in\mathcal B(T\restriction\alpha)$ above it, and promise to satisfy
\begin{equation}\tag{$\star$}\label{promise}\{\mathbf{b}_x^\alpha\mid x\in T\restriction C_\alpha\}\s T_\alpha.\end{equation}

Let $x\in T\restriction C_\alpha$. We shall describe $\mathbf{b}_x^\alpha$ as the limit of a sequence $b_x^\alpha\in\prod_{\beta\in C_\alpha\setminus\dom(x)}T_\beta$ such that:
\begin{itemize}
\item $b_x^\alpha(\dom(x))=x$;
\item $b_x^\alpha(\beta')\subset b_x^\alpha(\beta)$ for every pair $\beta'<\beta$ of ordinals from $C_\alpha\setminus\dom(x)$;
\item $b_x^\alpha(\beta)=\bigcup\im(b_x^\alpha\restriction\beta)$ for all $\beta\in\acc(C_\alpha\setminus\dom(x))$.
\end{itemize}
The sequence is defined by recursion over $\beta\in C_\alpha\setminus\dom(x)$.
We start by letting $b_x^\alpha(\dom(x)):=x$. At successor step, for every $\beta\in C_\alpha\setminus(\dom(x)+1)$ such that $b_x^\alpha(\beta^-)$ has already been defined with $\beta^-:=\sup(C_\alpha\cap\beta)$,
we consult the following set:
$$Q^{\alpha, \beta}_x := \{ t\in T_\beta\mid \exists s\in \Omega_{\beta}[ (s\cup b^\alpha_x(\beta^-))\s t]\}.$$
Now, consider the two possibilities:
\begin{itemize}
\item If $Q^{\alpha,\beta}_x \neq \emptyset$, then let $b^\alpha_x(\beta)$ be its $\lhd$-least element;
\item Otherwise, let $b^\alpha_x(\beta)$ be the $\lhd$-least element of $T_\beta$ that extends $b^\alpha_x(\beta^-)$.
Such an element must exist, as the level $T_\beta$ was constructed so as to preserve normality.
\end{itemize}
Finally, for every $\beta\in\acc(C_\alpha\setminus\dom(x))$ such that $b_x^\alpha\restriction\beta$ has already been defined, we let $b_x^\alpha(\beta)=\bigcup\im(b_x^\alpha\restriction\beta)$.
By \eqref{promise} and the exact same proof of \cite[Claim~2.2.1]{paper26}, $b_x^\alpha(\beta)$ is indeed in $T_\beta$.

This completes the definition of $\mathbf{b}_x^\alpha$,
and it is clear that $\mathbf b_x^\alpha\in\mathcal B(T\restriction\alpha)$.
\begin{claim}\label{claim431} For every $t\in \{\mathbf{b}_x^\alpha\mid x\in T\restriction C_\alpha\}$, there exists a tail of $\varepsilon\in C_\alpha$ such that $t=\mathbf b^\alpha_{t\restriction\varepsilon}$.
\end{claim}
\begin{proof} Let $x\in T\restriction C_\alpha$ and write $t:=\mathbf{b}_x^\alpha$.
An inductive argument, utilizing the above canonical definition of $b_x^\alpha$ makes it clear that $x=t\restriction\dom(x)$ and that, furthermore,
$b_x^\alpha(\varepsilon)=t\restriction\varepsilon$ for every $\varepsilon\in C_\alpha\setminus\dom(x)$.
\end{proof}

Finally, we define $T_\alpha$ as follows:
$$T_\alpha:=\begin{cases}\mathcal B(T\restriction\alpha),&\text{if }f_\alpha=\mathbf{b}_x^\alpha\text{ for some }x\in T\restriction C_\alpha;\\
\mathcal B(T\restriction\alpha)\setminus\{f_\alpha\},&\text{otherwise}.\end{cases}$$

This completes our recursive construction of $\langle T_\alpha\mid\alpha<\kappa\rangle$. Now, let $T:=\bigcup_{\alpha<\kappa} T_\alpha$.
Since $\kappa$ is strongly inaccessible, the levels of $T$ have size $<\kappa$.
Altogether, $T$ is a normal, prolific, full streamlined $\kappa$-tree.
Fix a club $D\s\kappa$ such that,
for every $\alpha\in S\cap D$, for every $f\in T_\alpha$, $\{ \beta<\alpha\mid f\restriction\beta=f_\beta\}$ is stationary in $\alpha$.

\begin{claim}\label{claim432} Let $\alpha\in S\cap D$. Then $T_\alpha=\{\mathbf{b}_x^\alpha \mid x\in T\restriction C_\alpha\}$.
\end{claim}
\begin{proof} Let $\rho\in T_\alpha$, and we shall find some $x\in T\restriction C_\alpha$ such that $\rho=\mathbf b_x^\alpha$.
As $\alpha\in S\cap D$ and $\rho\in T_\alpha$, the following set is stationary in $\alpha$:
$$B_\rho=\{\beta\in\acc(C_\alpha) \mid \rho\restriction\beta=f_\beta\}.$$

Let $\beta\in B_\rho$. Since $f_\beta=\rho\restriction\beta$ is in $T$, the definition of $T_\beta$ above implies
that there exists some $x\in T\restriction C_\beta$ such that $f_\beta=\mathbf{b}_{x}^\beta$.
By Claim~\ref{claim431}, there exists a tail of $\varepsilon\in C_\beta$ such that $f_\beta=\mathbf{b}_{\rho\restriction\varepsilon}^\beta$.
By $\sq^*$-coherence of $\vec C$, we may then find a large enough $\varepsilon_\beta\in C_\beta$ such that
$f_\beta=\mathbf{b}_{\rho\restriction\varepsilon_\beta}^\beta$ and $C_\alpha\cap[\varepsilon_\beta,\beta)=C_\beta\cap[\varepsilon_\beta,\beta)$.
By Fodor's lemma for ordinals of uncountable cofinality, we may fix some $\varepsilon\in C_\alpha$ such that $B_\rho^\varepsilon:=\{ \beta\in B_\rho\mid \varepsilon_\beta\le\varepsilon\}$ is stationary.
Denote $x:=\rho\restriction\varepsilon$.
Then, for every $\beta\in B_\rho^\varepsilon$, it is the case that $\rho\restriction\beta=\mathbf b^\beta_{x}$. Furthermore,
since $C_\alpha\cap[\dom(x),\beta)=C_\beta\setminus\dom(x)$, it is the case that $\mathbf b^\beta_x=b^\alpha_{x}(\beta)$.
Altogether, $\rho=\mathbf b_x^\alpha$.
\end{proof}

Finally, since $T$ is splitting, to prove that $T$ is $\kappa$-Souslin, it suffices to prove that it has no antichains of size $\kappa$.
To this end, let $A$ be maximal antichain in $T$.
By \cite[Claim~2.2.2]{paper26},
the following set is stationary in $\kappa$:
$$B := \{ \beta <\kappa \mid  A\cap(T\restriction\beta)= \Omega_\beta\text{ is a maximal antichain in }T\restriction\beta \}.$$
As $\vec C$ witnesses $\p^-(\kappa,2,{\sq^*},1,\{S\})$,
we may now find some $\alpha\in S\cap D$ such that:
$$\sup(\nacc(C_\alpha)\cap B)=\alpha.$$
Using Claim~\ref{claim432}, the very same analysis of \cite[Claim~2.2.3]{paper26} implies that $A\s T\restriction\alpha$.
In particular, $|A|<\kappa$, as sought.
\end{proof}

Recalling Definition~\ref{xbox}, the following implies Theorem~\ref{thma}.
\begin{cor} Suppose that $\kappa$ is a subtle cardinal and $\p(\kappa,2,{\sq^*},1)$ holds.
Then there exists a streamlined, normal, prolific full $\kappa$-Souslin tree.
\end{cor}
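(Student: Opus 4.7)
My plan is to deduce this corollary by matching the hypotheses of Proposition~\ref{prop45} and Theorem~\ref{thm43}. First, since $\kappa$ is subtle, Proposition~\ref{prop45} yields a stationary $S\s\reg(\kappa)\setminus\{\omega\}$, hence $S\s E^\kappa_{>\omega}$, for which $\diamondsuit^*_S(\kappa\textup{-trees})$ holds, while simultaneously recording that $\kappa$ is strongly inaccessible. Two of the three bullets of Theorem~\ref{thm43} are thus matched for free; the remaining issue is to promote the assumption $\p(\kappa,2,{\sq^*},1)$, which by the stated convention only unpacks as $\diamondsuit(\kappa)$ together with $\p^-(\kappa,2,{\sq^*},1,\{\kappa\})$, to the stronger $\p^-(\kappa,2,{\sq^*},1,\{S\})$ needed by the theorem.

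To carry out this promotion, I would fix the witnessing $\sq^*$-coherent $C$-sequence $\vec C=\langle C_\alpha\mid\alpha<\kappa\rangle$, a cofinal $B\s\kappa$, and a club $E\s\kappa$, and write $G:=\{\beta<\kappa\mid\sup(\nacc(C_\beta)\cap B)=\beta\}$, which is stationary by the $\{\kappa\}$-guessing clause. I want to locate some $\alpha\in S\cap E\cap G$. The strategy is to use the strong reflection available at a subtle cardinal: every subtle cardinal is weakly compact, so stationary sets reflect at stationarily many regular uncountable points, and in particular one may pick $\alpha\in S\cap E$ such that $G\cap\alpha$ is stationary in $\alpha$. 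Then $G\cap\alpha\cap\acc(C_\alpha)$ is cofinal in $\alpha$, and for every $\beta$ in that set, $\sq^*$-coherence furnishes some $\varepsilon_\beta<\beta$ with $C_\beta\setminus\varepsilon_\beta=C_\alpha\setminus\varepsilon_\beta$, giving $\sup(\nacc(C_\alpha)\cap B)\ge\sup(\nacc(C_\beta)\cap B\setminus\varepsilon_\beta)=\beta$; taking the supremum over $\beta$ places $\alpha$ into $G$, as desired.

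Once the three bullets of Theorem~\ref{thm43} are verified, its conclusion produces the required streamlined, normal, prolific, full $\kappa$-Souslin tree. The main obstacle I anticipate is the reflection argument in the previous paragraph, where one must pin down the precise form of stationary reflection available at a subtle cardinal and confirm that the reflection points can be demanded to lie in the specific stationary set $S$ supplied by Proposition~\ref{prop45}. If that reflection step proves awkward to state in isolation, a fallback plan is to inline the upgrade directly into the proof of Theorem~\ref{thm43}: the hypothesis $\alpha\in S$ is used there only to invoke Claim~\ref{claim432}, so it suffices to locate some $\alpha\in S\cap D$ within the $\p^-$-good set produced by $\vec C$, which is exactly what the reflection and $\sq^*$-coherence manipulation above delivers.
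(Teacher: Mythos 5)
Your overall skeleton matches the paper's: invoke Proposition~\ref{prop45} to get strong inaccessibility and a stationary $S\s\reg(\kappa)\setminus\{\omega\}$ with $\diamondsuit^*_S(\kappa\textup{-trees})$, then feed Theorem~\ref{thm43}. The gap is in the promotion of $\p^-(\kappa,2,{\sq^*},1,\{\kappa\})$ to $\p^-(\kappa,2,{\sq^*},1,\{S\})$. The paper does not prove this from scratch; it quotes \cite[Lemma~3.8]{paper32}, a transfer result whose proof exploits $\diamondsuit(\kappa)$ (available here, since the hypothesis is the full principle $\p$, or alternatively via Lemma~\ref{dtree}(1)) together with the slack in $\sq^*$-coherence to re-engineer the $C$-sequence so that the guessing occurs on the prescribed stationary set. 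Your substitute argument rests on the assertion that every subtle cardinal is weakly compact, and this is false: subtlety is a $\Pi^1_1$ property, so a weakly compact subtle cardinal has subtle cardinals below it, and in particular the least subtle cardinal is not weakly compact. Worse, subtlety does not even yield the stationary reflection you need: in $\mathsf{L}$ the least subtle cardinal is not weakly compact, hence (by Jensen) it carries a non-reflecting stationary set, so no reflection principle of the required kind follows from the corollary's hypotheses.

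Even granting weak compactness for the sake of argument, your reflection step is still short of what is needed: weak compactness gives that the trace $\{\alpha<\kappa\mid G\cap\alpha\text{ is stationary in }\alpha\}$ is stationary, but two stationary sets can be disjoint, so there is no reason a reflection point of $G$ can be found inside the particular stationary set $S\cap E$ handed to you by Proposition~\ref{prop45}; that would require $S$ to be positive with respect to an appropriate normal filter, which nothing in the construction of $S$ provides. (The local part of your argument is fine: if $\alpha\in S\cap E$ and $G\cap\acc(C_\alpha)$ is cofinal in $\alpha$, then $\sq^*$-coherence indeed yields $\sup(\nacc(C_\alpha)\cap B)=\alpha$.) Your fallback of inlining the upgrade into the proof of Theorem~\ref{thm43} relies on the same unjustified reflection step, so it does not repair the argument; the correct route is the quoted transfer lemma \cite[Lemma~3.8]{paper32}, whose mechanism is a $\diamondsuit$-based modification of the witnessing $C$-sequence rather than reflection.
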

\begin{proof} By Proposition~\ref{prop45}, $\kappa$ is strongly inaccessible and we may pick a stationary $S\s\reg(\kappa)\setminus\{\omega\}$ such that $\diamondsuit^*_S(\kappa\textup{-trees})$ holds.
By \cite[Lemma~3.8]{paper32}, $\p(\kappa,2,{\sq^*},1)$ implies $\p(\kappa,2,{\sq^*},1,\{S\})$.
Now, appeal to Theorem~\ref{thm43}.
\end{proof}

At the end of \cite{Sh:624}, it was announced that in $\mathsf{L}$, for every Mahlo cardinal $\kappa$,
there exists a full $\kappa$-Souslin tree.
The proof of Proposition~1.11 there reads ``Look carefully at the proof of 1.7.''
and the proof of the subsequent Corollary~1.12  makes use of a nonreflecting stationary set,
hence one clearly needs to add the restriction that $\kappa$ is not weakly compact.

Our proof of Theorem~\ref{thm43} is indeed the outcome of reading \cite{Sh:624} carefully.
Note, however, that our construction is necessarily different since it is based on the proxy principle,
which, by \cite[Theorem~1.12]{lh_trees_squares_reflection} is compatible with the assertion that all stationary subsets of $\kappa$ reflects.
\begin{cor}[{\cite[Corollary~1.12]{Sh:624}}] In $\mathsf{L}$, for every Mahlo cardinal $\kappa$ that is not weakly compact,
there exists a streamlined, normal, prolific full $\kappa$-Souslin tree.
\end{cor}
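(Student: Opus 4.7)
The plan is to verify, in $\mathsf{L}$, the three hypotheses of Theorem~\ref{thm43} for an arbitrary Mahlo cardinal $\kappa$ that is not weakly compact, and then appeal to that theorem. The first hypothesis, strong inaccessibility, is immediate from Mahloness. For the diamond hypothesis, I would set $S:=\reg(\kappa)\setminus\{\omega\}$, which is stationary in $\kappa$ because $\kappa$ is Mahlo; then Remark~\ref{jen}(2) already records that in $\mathsf{L}$, Jensen's canonical $\diamondsuit(\kappa)$-sequence witnesses $\diamondsuit^*_S(\kappa\text{-trees})$ for precisely this $S$. So this second bullet comes for free in $\mathsf{L}$.

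The substantive hypothesis to establish is the proxy principle $\p^-(\kappa,2,{\sq^*},1,\{S\})$. My approach is to invoke Jensen's classical theorem that in $\mathsf{L}$, $\square(\kappa)$ holds at every regular uncountable $\kappa$ that is not weakly compact, which yields a $\sq$-coherent (hence $\sq^*$-coherent) $C$-sequence $\vec C=\langle C_\alpha\mid\alpha<\kappa\rangle$ over $\kappa$. It remains to upgrade $\vec C$ to possess the guessing feature required by the proxy principle: for every cofinal $B\s\kappa$, for stationarily many $\alpha\in S$, $\sup(\nacc(C_\alpha)\cap B)=\alpha$. This would be done by a standard $\mathsf{L}$-style diagonal construction along a $\diamondsuit^-(H_\kappa)$-sequence (available by Lemma~\ref{dtree}(1) and Fact~\ref{def_Diamond_H_kappa}), wherein at each $\alpha\in S$ one perturbs $C_\alpha$ only on $\nacc(C_\alpha)$ above a sufficiently large threshold, so as to absorb the prediction cofinally below $\alpha$. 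Since $\sq^*$-coherence, unlike $\sq$-coherence, is insensitive to modifications of $\nacc$-points above a fixed tail, the upgraded sequence remains $\sq^*$-coherent.

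The main obstacle is executing the guessing upgrade in a manner compatible with the target stationary set $S\s\reg(\kappa)\setminus\{\omega\}$ rather than the more familiar case of a non-reflecting stationary subset of $E^\kappa_\omega$. Here the failure of weak compactness of $\kappa$ is essential: it is exactly what allows the traces of guessing to avoid reflecting, which is why, as the excerpt explicitly notes, the original proof in \cite{Sh:624} of this corollary used a non-reflecting stationary set and hence tacitly required the non-weakly-compact hypothesis. Once the $\sq^*$-coherent guessing sequence is produced, all three bullets of Theorem~\ref{thm43} are satisfied, and the theorem directly delivers the desired streamlined, normal, prolific full $\kappa$-Souslin tree.
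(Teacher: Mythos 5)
Your skeleton coincides with the paper's for two of the three hypotheses: you take $S:=\reg(\kappa)\setminus\{\omega\}$, quote Remark~\ref{jen}(2) for $\diamondsuit^*_S(\kappa\textup{-trees})$ in $\mathsf{L}$, and plan to finish by Theorem~\ref{thm43}. The genuine gap is in your derivation of $\p^-(\kappa,2,{\sq^*},1,\{S\})$. The key claim you use to preserve coherence --- that ``$\sq^*$-coherence, unlike $\sq$-coherence, is insensitive to modifications of $\nacc$-points above a fixed tail'' --- has it backwards. By definition, $D\sq^* C$ asks that $D\setminus\varepsilon\sq C\setminus\varepsilon$ for \emph{some} $\varepsilon<\sup(D)$, so discrepancies are forgiven only on a bounded initial segment, never on a tail. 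To make $\sup(\nacc(C_\alpha)\cap B)=\alpha$ you must replace nacc points of $C_\alpha$ cofinally in $\alpha$; then for every sufficiently large $\beta\in\acc(C_\alpha)$ the modified set differs from $C_\alpha\cap\beta=C_\beta$ unboundedly below $\beta$, and since such $\beta$ typically has small cofinality and so lies outside $S$ (hence $C_\beta$ is left untouched by your scheme of perturbing only levels in $S$), the requirement $C_\beta\sq^* C'_\alpha$ fails outright. Repairing this forces one to modify the sequence coherently at \emph{all} levels simultaneously and to re-verify coherence at the (possibly new) accumulation points; carrying out exactly this relativization of the guessing to a prescribed stationary $S$, at the price of weakening $\sq$ to $\sq^*$, is the nontrivial content of \cite[Lemma~3.8]{paper32}. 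Your side remark that non-weak-compactness is needed ``so the traces of guessing avoid reflecting'' is also off the mark: its only role is to supply $\square(\kappa)$-type combinatorics in $\mathsf{L}$, and the paper stresses that the proxy principle is compatible with full stationary reflection.

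For comparison, the paper's proof is a short citation chain: in $\mathsf{L}$, for $\kappa$ Mahlo and not weakly compact, $\p(\kappa,2,{\sq},1)$ holds by \cite[Theorem~3.12]{paper22}; by \cite[Lemma~3.8]{paper32} this yields $\p(\kappa,2,{\sq^*},1,\{S\})$ for every stationary $S\s\kappa$; then Remark~\ref{jen}(2) and Theorem~\ref{thm43} conclude. If you prefer to start from Jensen's $\square(\kappa)$ plus $\diamondsuit$ and build the proxy sequence by hand, that route is viable in principle, but you would essentially be reproving those two results, and the one structural claim your sketch offers in place of that work is false as stated.
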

\begin{proof} Work in $\mathsf{L}$,
and suppose that $\kappa$ is a Mahlo cardinal that is not weakly compact.
By \cite[Theorem~3.12]{paper22}, $\p(\kappa,2,{\sq},1)$ holds.
By \cite[Lemma~3.8]{paper32}, the latter implies $\p(\kappa,2,{\sq^*},1,\{S\})$ for every stationary $S\s\kappa$.
Let $S:=\reg(\kappa)\setminus\{\omega\}$. By Remark~\ref{jen}, $\diamondsuit_S^*(\kappa\textup{-trees})$ holds.
Now, appeal to Theorem~\ref{thm43}.
\end{proof}

Our next task is obtaining a large family of full $\kappa$-Souslin trees.
This is achieved by strengthening the coherence relation from $\sq^*$ to $\sq$.

\begin{thm}\label{thm44} 	Suppose that:
\begin{itemize}
\item $\kappa$ is a strongly inaccessible cardinal;
\item $S\s E^\kappa_{>\omega}$ is stationary, and $\diamondsuit^*_S(\kappa\textup{-trees})$ holds;
\item  $\p^-(\kappa,2,{\sq},1,\{S\})$ holds.
\end{itemize}

Then there is a family $\mathcal T$ of $2^\kappa$ many streamlined, normal, binary, splitting, full $\kappa$-trees
such that $\bigotimes\mathcal T'$ is $\kappa$-Souslin for every nonempty $\mathcal T'\in[\mathcal T]^{<\kappa}$.
\end{thm}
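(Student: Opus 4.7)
The plan is to extend the construction of Theorem~\ref{thm43} to $2^\kappa$ many trees in parallel, indexed by $\eta \in {}^\kappa 2$, exploiting the strengthening from $\sq^*$-coherence to exact $\sq$-coherence in order to align the canonical branches across distinct trees.

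Fix diamond sequences $\langle f_\beta\mid \beta<\kappa\rangle$ and $\langle \Omega_\beta\mid \beta<\kappa\rangle$ as in Theorem~\ref{thm43}, a well-ordering $\lhd$ of $H_\kappa$, and a $\sq$-coherent $\vec C = \langle C_\alpha\mid \alpha<\kappa\rangle$ witnessing $\p^-(\kappa,2,{\sq},1,\{S\})$, so that $C_\beta = C_\alpha\cap\beta$ for every $\beta \in \acc(C_\alpha)$. For each $\eta \in {}^\kappa 2$, recursively define $T^\eta \s {}^{<\kappa}2$ with $T^\eta_{\alpha+1} := \{t{}^\smallfrown\langle 0\rangle, t{}^\smallfrown\langle 1\rangle \mid t \in T^\eta_\alpha\}$, making the tree binary and splitting. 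At a limit $\alpha$ and $x \in T^\eta \restriction C_\alpha$, build $\mathbf b^{\alpha,\eta}_x$ by recursion along $C_\alpha$ as in Theorem~\ref{thm43}, except that at each successor stage $\beta \in \nacc(C_\alpha)\setminus\dom(x)$ the choice from $Q^{\alpha,\beta,\eta}_x$ is biased by $\eta(\beta)$ --- e.g., take its $\lhd$-least element whose last coordinate equals $\eta(\beta)$, with a $\lhd$-least fallback in $T^\eta_\beta$ extending $b^{\alpha,\eta}_x(\beta^-)$ when no such element is available. Then set $T^\eta_\alpha$ to be $\mathcal B(T^\eta \restriction \alpha)$ if $f_\alpha = \mathbf b^{\alpha,\eta}_x$ for some $x \in T^\eta\restriction C_\alpha$, and $\mathcal B(T^\eta \restriction \alpha)\setminus\{f_\alpha\}$ otherwise. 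Each $T^\eta$ is streamlined, normal, binary, splitting, and full by arguments identical to those of Theorem~\ref{thm43}; that $T^{\eta}\ne T^{\eta'}$ for $\eta\ne\eta'$ follows a posteriori from Proposition~\ref{weakpr} once the product Souslinness is established.

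For the $\kappa$-Souslinness of $\mathbf T' := \bigotimes_{\eta \in I} T^\eta$ for nonempty $I \in [{}^\kappa 2]^{<\kappa}$: since $\kappa$ is strongly inaccessible and $|I|<\kappa$, every level of $\mathbf T'$ has size $<\kappa$, and via a fixed injection of tuples into $H_\kappa$ we may view $\mathbf T'$ as a streamlined $\kappa$-tree. Let $A$ be a maximal antichain in $\mathbf T'$. Then $\langle \Omega_\beta\rangle$ catches (the code of) $A\cap\mathbf T'\restriction\beta$ as a maximal antichain on a stationary set $B$, and $\p^-(\kappa,2,{\sq},1,\{S\})$ supplies some $\alpha \in S \cap D$ (with $D$ the club from $\diamondsuit^*_S(\kappa\textup{-trees})$) satisfying $\sup(\nacc(C_\alpha) \cap B) = \alpha$. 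A product analogue of Claim~\ref{claim432} then yields, for every tuple $\vec\rho = (\rho^\eta)_{\eta \in I} \in \mathbf T'_\alpha$, a single cutoff $\varepsilon \in C_\alpha$ and base nodes $x^\eta = \rho^\eta\restriction\varepsilon$ such that $\rho^\eta = \mathbf b^{\alpha,\eta}_{x^\eta}$ for all $\eta \in I$. The tuple-level sealing at stages $\beta \in \nacc(C_\alpha)\cap B$ then forces $\vec\rho \restriction (\beta+1)$ to extend some member of $A$, so $A \subseteq \mathbf T'\restriction\alpha$ and $|A|<\kappa$.

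The main obstacle is the product analogue of Claim~\ref{claim432}. With only $\sq^*$-coherence, each coordinate $\eta$ would deliver its own cutoff $\varepsilon_\eta$ through a separate Fodor application, and these cutoffs need not share a common tail. Exact $\sq$-coherence removes this difficulty: for $\beta \in \acc(C_\alpha)$ one has $C_\beta = C_\alpha \cap \beta$ outright, hence $\mathbf b^{\beta,\eta}_x = \mathbf b^{\alpha,\eta}_x \restriction \beta$ without any shifting, so a single Fodor argument applied to the $H_\kappa$-coded tuple $\vec\rho$ yields a uniform $\varepsilon$ working for all $\eta \in I$. A further subtlety is that the sealing set $Q^{\alpha,\beta,\eta}_x$ must be chosen so that a single diamond object $\Omega_\beta$ can simultaneously seal antichains across every product $\bigotimes_{\eta\in I}T^\eta$ with $|I|<\kappa$; this is accommodated by allowing $\Omega_\beta$ to code a triple $(I, \vec s, A)$ consisting of an index set, a tuple $\vec s$ of level-$\beta$ nodes, and an antichain $A$ of the coded product tree, and requiring $b^{\alpha,\eta}_x(\beta)$ to extend $s^\eta$ at those $\beta$ where the current construction matches the coded pattern.
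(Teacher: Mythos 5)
Your overall architecture — a system of levels indexed by $\eta\in{}^{<\kappa}2$, canonical branches $\mathbf b^{\eta}_x$ built along $C_\alpha$, fullness arranged by omitting at most the guessed branch $f_\alpha$, sealing of product antichains through $\Omega_\beta$, exact $\sq$-coherence to keep the branch matrices coherent, and distinctness of the trees extracted a posteriori from Proposition~\ref{weakpr} — is the paper's. But there is a genuine gap in the Souslinness verification. At a sealing stage $\beta\in\nacc(C_\alpha)$ the construction can seal only \emph{one} designated tuple of base nodes: the value $b^{\alpha,\eta}_x(\beta)$ is shared by every tuple having $(\eta,x)$ as a coordinate, and two different base tuples may force that shared coordinate to climb above incompatible members of the maximal antichain $A$, so they cannot be sealed at the same $\beta$. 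Hence, to conclude $A\s T\restriction\alpha$, you need for \emph{every} $\vec\rho\in\mathbf T'_\alpha$ (equivalently, every base tuple $\vec x$ arising at level $\alpha$) some $\beta\in\nacc(C_\alpha)$ at which precisely that $\vec x$ was designated while $\Omega_\beta$ correctly coded $A$ and the restricted index sequence. Your proposal reads the designation off $\Omega_\beta$ alone and invokes $\p^-(\kappa,2,{\sq},1,\{S\})$ with a single cofinal set $B$; but $\Omega_\beta=\Omega\cap\mathcal M$ is an initial segment of one fixed $\Omega$ chosen \emph{before} the proxy principle is applied, so it cannot designate, cofinally often below the (yet unknown) level $\alpha$, each of the possibly $|\alpha|$-many base tuples. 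The sentence ``the tuple-level sealing at stages $\beta\in\nacc(C_\alpha)\cap B$ then forces $\vec\rho\restriction(\beta+1)$ to extend some member of $A$'' is therefore unjustified for all but one pattern of $\vec\rho$. (A side imprecision: an index set $I\in[{}^\kappa2]^{<\kappa}$ is not an element of $H_\kappa$, so $\Omega_\beta$ can only code the restrictions $\langle\eta_j\restriction\epsilon\mid j<\chi\rangle$ for $\epsilon<\beta$, as the paper does; also the $\eta(\beta)$-biasing plays no role in this issue, since it carries no information about base tuples.)

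The paper closes exactly this gap by two moves you omit. First, it uses the partition form of $\diamondsuit(H_\kappa)$ with $\langle R_i\mid i<\kappa\rangle$ and a map $\psi=\phi\circ\pi$, so that the base tuple sealed at stage $\beta$ is $\psi(\beta)$, determined by which cell $R_{\pi(\beta)}$ contains $\beta$ rather than by $\Omega_\beta$. Second, it upgrades the guessing: since by Lemma~\ref{dtree} one may assume $S\s\reg(\kappa)$ and $\diamondsuit(\kappa)$ holds, \cite[Lemma~3.8(2)]{paper28} turns the hypothesis $\p^-(\kappa,2,{\sq},1,\{S\})$ into $\p^-(\kappa,2,{\sq},\kappa,\{S\})$, and then the chosen $\alpha\in S$ satisfies $\sup(\nacc(C_\alpha)\cap B_i)=\alpha$ for \emph{all} $i<\alpha$ simultaneously; given $\vec\rho\in\mathbf T'_\alpha$ with base tuple $\vec x\in T\restriction\alpha\s\phi[\alpha]$, one picks $i<\alpha$ with $\phi(i)=\vec x$ and a sealing stage $\beta\in\nacc(C_\alpha)\cap B_i$ beyond the relevant parameters. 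Without this $\theta=\kappa$ device (or an equivalent mechanism for designating base tuples) your verification does not go through; with it, the remainder of your sketch — including the use of exact $\sq$-coherence, whose main role in the paper is the matrix coherence $\mathbb B^\alpha_{<\beta}=\mathbb B^\beta$ at $\beta\in\acc(C_\alpha)$ (the uniform cutoff in the product analogue of Claim~\ref{claim432} actually comes from the tail property of Claim~\ref{c441} together with $\cf(\alpha)>\chi$, not from a single Fodor application) — matches the paper's proof.
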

\begin{proof}
Let $\langle f_\beta\mid\beta<\kappa\rangle$ be a witness for $\diamondsuit^*_S(\kappa\textup{-trees})$.
By Lemma~\ref{dtree}, we may assume that $S\s\reg(\kappa)$,
and by Fact~\ref{def_Diamond_H_kappa}, we may also fix sequences $\langle \Omega_\beta\mid\beta<\kappa\rangle$ and $\langle R_i\mid i<\kappa\rangle$
together witnessing $\diamondsuit(H_\kappa)$.
Since $S\s\reg(\kappa)$ and $\diamondsuit(\kappa)$ holds, by \cite[Lemma~3.8(2)]{paper28},
$\p^-(\kappa,2,{\sq},\kappa,\{S\})$ follows from $\p^-(\kappa,2,{\sq},1,\{S\})$,
and hence we may fix a sequence $\vec C=\langle C_\alpha\mid \alpha<\kappa\rangle$ witnessing $\p^-(\kappa,2,{\sq},\kappa,\{S\})$.
Without loss of generality, $0\in C_\alpha$ for all nonzero $\alpha<\kappa$.

Let $\pi:\kappa\rightarrow\kappa$ be such that $\alpha\in R_{\pi(\alpha)}$ for all $\alpha<\kappa$.
As $\kappa$ is strongly inaccessible, let $\lhd$ be some well-ordering of $H_\kappa$ of order-type $\kappa$,
and let $\phi:\kappa\leftrightarrow H_\kappa$ witness the isomorphism $(\kappa,\in)\cong(H_\kappa,\lhd)$.
Put $\psi:=\phi\circ\pi$.

We shall construct a sequence $\langle L^\eta\mid \eta\in {}^{<\kappa}2\rangle$ such that,
for all $\alpha<\kappa$ and $\eta\in {}^\alpha2$:
\begin{itemize}
\item[(i)] $L^\eta\s {}^\alpha2$;
\item[(ii)] for every $\beta<\alpha$, $L^{\eta\restriction\beta}=\{t\restriction\beta\mid t\in L^\eta\}$.
\end{itemize}

By convention, for every $\alpha\in\acc(\kappa+1)$ such that $\langle L^\eta\mid \eta\in {}^{<\alpha}2\rangle$ has already been defined,
and for every $\eta\in {}^\alpha2$, we shall let $T^\eta:=\bigcup_{\beta<\alpha}L^{\eta\restriction\beta}$,
so that $T^\eta$ is a tree of height $\alpha$ whose $\beta^{\text{th}}$ level is $L^{\eta\restriction\beta}$ for all $\beta<\alpha$.

\medskip

The construction of the sequence $\langle L^\eta\mid \eta\in {}^{<\kappa}2\rangle$ is by recursion on $\dom(\eta)$.
We start by letting $L^\emptyset:=\{\emptyset\}$.
For every $\alpha<\kappa$ such that $\langle L^\eta\mid \eta\in {}^\alpha2\rangle$ has already been defined,
for every $\eta\in {}^{\alpha+1}2$, let
$$L^\eta:=\{ t{}^\smallfrown\langle0\rangle,t{}^\smallfrown\langle1\rangle\mid t\in L^{\eta\restriction\alpha}\}.$$

Suppose now that $\alpha\in\acc(\kappa)$ is such that $\langle L^\eta\mid \eta\in {}^{<\alpha}2\rangle$ has already been defined.
We shall define a matrix
$$\mathbb B^\alpha=\langle b_x^{\alpha,\eta}\mid \beta\in C_\alpha, \eta\in {}^{\beta}2, x\in T^\eta\restriction C_\alpha\cap(\beta+1) \rangle$$
ensuring that $x\s b_x^{\alpha,\bar\eta}\s b_x^{\alpha,\eta}\in L^\eta$ whenever $\bar\eta\s\eta$.\footnote{\label{fnc}This also implies that the matrix is continuous,
i.e., for $\beta\in\acc(C_\alpha)$ $\eta\in {}^{\beta}2$ and $x\in T^\eta\restriction(C_\alpha\cap\beta)$, it is the case that $b_x^{\alpha,\eta}=\bigcup\{b_x^{\alpha,\eta\restriction\bar\beta}\mid \bar\beta\in C_\alpha\cap\beta\setminus\dom(x)\}$.}
Then, for all $\eta\in {}^\alpha2$ and $x\in T^\eta\restriction C_\alpha$,
it will follow that $\mathbf b_x^{\eta}:=\bigcup_{\beta\in C_\alpha\setminus\dom(x)}b_x^{\alpha,\eta\restriction\beta}$ is an element of $\mathcal B(T^\eta)$ extending $x$,
and we shall let
\begin{equation}\tag{$\star$}\label{promise2}L^\eta:=\begin{cases}\mathcal B(T^\eta),&\text{if }f_\alpha=\mathbf{b}_x^\eta\text{ for some }x\in T^\eta\restriction C_\alpha;\\
\mathcal B(T^\eta)\setminus\{f_\alpha\},&\text{otherwise}.\end{cases}\end{equation}

We now turn to define the components of the matrix $\mathbb B^\alpha$ by recursion on $\beta\in C_\alpha$.
So suppose that $\beta\in C_\alpha$ is such that
$$\mathbb B^\alpha_{<\beta}:=\langle b_x^{\alpha,\eta}\mid \bar\beta\in C_\alpha\cap\beta, \eta\in {}^{\bar\beta}2, x\in T^\eta\restriction C_\alpha\cap(\bar\beta+1) \rangle$$
has already been defined.

$\br$ For all $\eta\in {}^\beta2$ and $x\in T^\eta$ such that $\dom(x)=\beta$, let $b_x^{\alpha,\eta} := x$.

$\br$ For all $\eta\in {}^\beta2$ and $x\in T^\eta$ such that $\dom(x)<\beta$,
there are two main cases to consider:

$\br\br$ Suppose that $\beta\in \nacc(C_\alpha)$ and denote $\beta^-:=\sup(C_\alpha\cap\beta)$.

$\br\br\br$ If $\beta\in \acc(\kappa)$
and there exists a nonzero cardinal $\chi<\kappa$ such that all of the following hold:
\begin{enumerate}
\item There exists a sequence $\langle \eta_j\mid j<\chi\rangle$ of elements of ${}^\beta2$,
and a maximal antichain $A$ in the product tree $\bigotimes_{j<\chi}T^{\eta_j}$
such that
$\Omega_\beta=\{(\langle \eta_j\restriction\epsilon\mid j<\chi\rangle,A\cap{}^\chi(^\epsilon2))\mid \epsilon<\beta\}$;\footnote{As $\beta\in\acc(\kappa)$, it is the case that $\chi$,
$\langle \eta_j\mid j<\chi\rangle$ and $A$ are uniquely determined by $\Omega_\beta$.}
\item 				$\psi(\beta)$ is a sequence $\langle x_j\mid j<\chi\rangle$ such that $x_j\in T^{\eta_j\restriction\beta^-}\restriction(C_\alpha\cap\beta^-)$ for every $j<\chi$;
\item There exists a unique $j<\chi$ such that $\eta_j=\eta$ and $x_j=x$.
\end{enumerate}

In this case, by Clauses (1) and (2), the following set is nonempty
$$Q^{\alpha,\beta} := \{ \vec t\in \prod\nolimits_{j<\chi}L^{\eta_j}\mid \exists\vec s\in A\forall j<\chi[ (\vec s(j)\cup b^{\alpha,\eta_j\restriction\beta^-}_{x_j})\s\vec t(j)]\},$$
so we let $\vec t:=\min(Q^{\alpha,\beta},\lhd)$,
and then we let $b^{\alpha,\eta}_x:=\vec t(j)$ for the unique index $j$ of Clause~(3).
It follows that $b^{\alpha,\eta\restriction\beta^-}_{x}\s\vec t(j)= b^{\alpha,\eta}_x$.

$\br\br\br$ Otherwise, let $b_x^{\alpha,\eta}$ be the $\lhd$-least element of $L^{\eta}$ extending $b_x^{\alpha,\eta\restriction\beta^-}$.

$\br\br$ Suppose that $\beta\in\acc(C_\alpha)$. Then we define $b_x^{\alpha,\eta} := \bigcup\{b_x^{\alpha,\eta\restriction\bar\beta}\mid \bar\beta\in C_\alpha\cap\beta\setminus\dom(x)\}$.
We must show that the latter belongs to 	 $L^{\eta}$.
By \eqref{promise2}, it suffices to prove that $b_x^{\alpha,\eta}=\mathbf{b}_x^{\eta}$.
Since $\vec C$ is coherent and $\beta\in\acc(C_\alpha)$, it is the case that $C_\alpha\cap\beta=C_{\beta}$,
and hence proving $b_x^{\alpha,\eta}=\mathbf{b}_x^{\eta}$ amounts to showing that
$b_x^{\alpha,\eta\restriction\delta}=b_x^{\beta,\eta\restriction\delta}$ for all $\delta\in C_\beta\setminus\dom(x)$.
This is taken care of by the following claim.

\begin{claim}\label{cl441} $\mathbb B^\alpha_{<\beta}=\mathbb B^\beta$. That is, the following matrices coincide:
\begin{itemize}
\item $\langle b_y^{\alpha,\xi}\mid \bar\beta\in C_\alpha\cap\beta, \xi\in {}^{\bar\beta}2, y\in T^\xi\restriction C_\alpha\cap(\bar\beta+1) \rangle$;
\item $\langle b_y^{\beta,\xi}\mid \bar\beta\in C_\beta, \xi\in {}^{\bar\beta}2, y\in T^\xi\restriction C_\beta\cap(\bar\beta+1) \rangle$.
\end{itemize}
\end{claim}
\begin{proof} We already pointed out that $C_\alpha\cap\beta=C_{\beta}$, which for the scope of this proof we denote by $D$.
Now, by induction on $\delta\in D$, we prove that
$$\langle b_y^{\alpha,\xi}\mid \xi\in {}^{\delta}2, y\in T^\xi\restriction D\cap(\delta+1) \rangle=\langle b_y^{\beta,\xi}\mid \xi\in {}^{\delta}2, y\in T^\xi\restriction D\cap(\delta+1) \rangle.$$

The base case $\delta=\min(D)=0$ is immediate since $b_\emptyset^{\alpha,\emptyset}=\emptyset=b_\emptyset^{\beta,\emptyset}$.
The limit case $\delta\in\acc(D)$ follows from the continuity of the matrices under discussion as remarked in Footnote~\ref{fnc},
with the exception of those $y$'s such that $\dom(y)=\delta$,
but in this case, $b_y^{\alpha,\xi}=y=b_y^{\beta,\xi}$ for all $\xi\in{}^\delta2$.

Finally, assuming that $\delta^-<\delta$ are two successive elements of $D$ such that
$$\langle b_y^{\alpha,\xi}\mid \xi\in {}^{{\delta^-}}2, y\in T^\xi\restriction D\cap({\delta^-}+1) \rangle=\langle b_y^{\beta,\xi}\mid \xi\in {}^{{\delta^-}}2, y\in T^\xi\restriction D\cap({\delta^-}+1) \rangle,$$
we argue as follows. Given $\zeta\in{}^\delta2$ and $z\in T^\zeta\restriction D\cap(\delta+1)$, there are a few possible options.
If $\dom(z)=\delta$, then $b_z^{\alpha,\zeta}=z=b_z^{\beta,\zeta}$, and we are done.
If $\dom(z)<\delta$, then $\dom(z)\le\delta^-$ and, by the above construction,
for every $\gamma\in\{\alpha,\beta\}$, the value of
$b_z^{\gamma,\zeta}$ is completely determined by $\delta$, $\langle L^\xi\mid\xi\in {}^{\le\delta}2\rangle$, $\Omega_\delta$, $D$,
$\psi(\delta)$, $\zeta$, $x$, and
$\langle b_y^{\gamma,\xi}\mid \xi\in {}^{\delta^-}2, y\in T^\xi\restriction (D\cap\delta^-)\rangle$
in such a way that our inductive assumptions imply that $b_z^{\alpha,\zeta}=b_z^{\beta,\zeta}$.
\end{proof}

This completes the definition of the matrix $\mathbb B^\alpha$,
from which we derive $\mathbf b_x^{\eta}:=\bigcup_{\beta\in C_\alpha\setminus\dom(x)}b_x^{\alpha,\eta\restriction\beta}$ for all $\eta\in{}^\alpha2$ and $x\in T^\eta\restriction C_\alpha$,
and then we define $L^\eta$ as per \eqref{promise2}.

\begin{claim}\label{c441} For all $\eta\in{}^\alpha2$ and $t\in \{\mathbf{b}_x^\eta\mid x\in T^\eta\restriction C_\alpha\}$, there exists a tail of $\varepsilon\in C_\alpha$ such that $t=\mathbf b^\eta_{t\restriction\varepsilon}$.
\end{claim}
\begin{proof} This follows from the canonical nature of the construction,
and the analysis is similar to the proof of Claim~\ref{cl441}. We leave it to the reader.
\end{proof}

At the end of the above process, for every $\eta\in{}^\kappa2$, we have obtained a streamlined tree $T^\eta:=\bigcup_{\alpha<\kappa}L^{\eta\restriction\alpha}$
whose $\alpha^{\text{th}}$ level is $L^{\eta\restriction\alpha}$.

Using $\diamondsuit^*_S(\kappa\textup{-trees})$,
for each $\eta\in{}^\kappa2$, fix a club $D^\eta\s\kappa$ such that,
for every $\alpha\in S\cap D^\eta$, for every $f\in (T^\eta)_\alpha=L^{\eta\restriction\alpha}$,
the set $\{ \beta<\alpha\mid f\restriction\beta=f_\beta\}$ is stationary in $\alpha$.

\begin{claim}\label{claim442}
Let $\eta\in{}^\kappa2$ and $\alpha\in S\cap D^\eta$.
Then $L^{\bar\eta}=\{\mathbf{b}_x^{\bar\eta}\mid x\in T^{\bar\eta}\restriction C_\alpha\}$, for $\bar\eta:=\eta\restriction\alpha$.
\end{claim}
\begin{proof} The proof is similar to that of Claim~\ref{claim432}, and is left to the reader.
\end{proof}

To see that the family of trees $\langle T^\eta\mid \eta\in{}^\kappa2\rangle$ is as sought,
let $\langle \eta_j\mid j<\chi\rangle$
be an injective sequence of elements of ${}^\kappa2$, with $0<\chi<\kappa$.
Let $\mathbf T=(T,<_T)$ denote he product tree $\bigotimes_{j<\sigma}T^{\eta_j}$.
As $\chi$ is smaller than our strongly inaccessible cardinal $\kappa$, $\mathbf T$ is a (splitting, normal) $\kappa$-tree.
Thus, to show that $\mathbf T$ is a $\kappa$-Souslin tree, it suffices to establish that it has no antichains of size $\kappa$.
To this end, let $A$ be a maximal antichain in $\mathbf  T$.

Set $\Omega:=\{(\langle \eta_j\restriction\epsilon\mid j<\chi\rangle,A\cap{}^\chi(^\epsilon2))\mid \epsilon<\kappa\}$.
As an application of $\diamondsuit(H_\kappa)$,
using the parameter $p:=\{\phi, A,\Omega,\langle T^{\eta_j}\mid j<\chi\rangle\}$,
we get that for every $i<\kappa$, the following set is cofinal (in fact, stationary) in $\kappa$:
$$B_i:=\{\beta\in R_i\cap\acc(\kappa)\mid \exists \mathcal M\prec H_{\kappa^+}\,(p\in \mathcal M, \mathcal M\cap\kappa=\beta, \Omega_\beta=\Omega\cap \mathcal M)\}.$$

Note that, for every $\beta\in \bigcup_{i<\kappa}B_i$,
it is the case that $T\restriction\beta\s\phi[\beta]$.

Fix a large enough $\delta<\kappa$ for which the map $j\mapsto\eta_j\restriction\delta$ is injective over $\chi$.
By the choice of $\vec C$, we may now find a regular cardinal $\alpha\in  S\cap \bigcap_{j<\chi}D^{\eta_j}$ above $\max\{\chi,\delta\}$ such that, for all $i<\alpha$,
$$\sup(\nacc(C_\alpha)\cap B_i)=\alpha.$$

In particular, $T\restriction\alpha\s\phi[\alpha]$.
Set $\bar\eta_j:=\eta_j\restriction\alpha$ for each $j<\chi$,
and note that $T\restriction\alpha=\bigotimes_{j<\chi}T^{\bar\eta_j}$.

\begin{claim}\label{c444}	 $A\s T\restriction\alpha$. In particular, $|A|<\kappa$.
\end{claim}
\begin{proof}
It suffices to show that every element of $T_\alpha$ extends some element of the antichain $A$.
To this end, let $\vec y=\langle y_j\mid j<\chi\rangle$ be an arbitrary element of $T_\alpha$.
For each $j<\chi$, since $\alpha\in S\cap D^{\eta_j}$,
Claim~\ref{claim442} implies that we may find some $x_j\in T^{\bar\eta_j}\restriction C_\alpha$ such that $y_j=\mathbf b_{x_j}^{\bar\eta_j}$.
By Claim~\ref{c441} and the fact that $\cf(\alpha)=\alpha>\chi$,
we may assume the existence of a large enough $\gamma<\alpha$ such that $\dom(x_j)=\gamma$ for all $j<\chi$.
In particular, $\vec x:=\langle x_j\mid j<\chi\rangle$ is an element of $T\restriction\alpha\s\phi[\alpha]$.
Fix some $i<\alpha$ such that $\phi(i)=\vec x$,
and then pick a large enough $\beta\in \nacc(C_\alpha)\cap B_i$ for which $\beta^-:=\sup(C_\alpha\cap\beta)$ is bigger than $\max\{\gamma,\delta\}$.
Note that $\psi(\beta)=\phi(\pi(\beta))=\phi(i)=\vec x$
and that $\langle \bar\eta_j\restriction\beta\mid j<\chi\rangle$ is an injective sequence.

Let $\mathcal M\prec H_{\kappa^+}$ be a witness for $\beta$ being in $B_i$.
Clearly,
\begin{itemize}
\item $T\cap\mathcal M=T\restriction\beta=\bigotimes_{j<\chi}T^{\bar\eta_j\restriction\beta}$,
\item $A\cap\mathcal M=A\cap(T\restriction\beta)$ is a maximal antichain in $T\restriction\beta$, and
\item $\Omega_\beta=\Omega\cap \mathcal M=\{(\langle \eta_j\restriction\epsilon\mid j<\chi\rangle,A\cap{}^\chi(^\epsilon2))\mid \epsilon<\beta\}$.
\end{itemize}

It thus follows that for every $j<\chi$,
$b^{\alpha,\bar\eta_j\restriction\beta}_{x_j}=\vec t(j)$, where $\vec t=\min(Q^{\alpha,\beta},\lhd)$.
In particular, we may fix some $\vec s\in A$ such that, for every $j<\chi$,
$$(\vec s(j)\cup b^{\alpha,\bar\eta_j\restriction\beta^-}_{x_j})\s\vec t(j)=b^{\alpha,\bar\eta_j\restriction\beta}_{x_j}\s\mathbf b_{x_j}^{\bar\eta_j}=y_j.$$
So $\vec s<_T \vec y$. As $\vec s$ is an element of $A$, we are done.
\end{proof}
This completes the proof.
\end{proof}

Putting fullness aside for a moment, we remark that the proof of Theorem~\ref{thm44} should make it clear that
if $\p(\kappa,\kappa,{\sq},\kappa,\{E^\kappa_{\ge\chi}\},2)$ holds for a regular uncountable cardinal $\kappa$
and a cardinal $\chi<\kappa$ such that $\lambda^{<\chi}<\kappa$ for all $\lambda<\kappa$,
then there is a family of $2^\kappa$ many (binary/prolific, normal, streamlined) $\kappa$-Souslin trees such that the product of less than $\chi$ many of them is (either empty or) Souslin.
The details will appear in \cite{paper65}.

\section{Full Souslin trees at successors of regulars}\label{succ}

In this section we provide sufficient conditions for the existence of full $\kappa$-Souslin tree for $\kappa$ a successor of a regular cardinal. Unlike the previous section,
here we open with the most general construction.
\begin{thm}\label{thm51} 	Suppose that:
\begin{itemize}
\item $\kappa=\lambda^+=2^\lambda$ for $\lambda$ a regular uncountable cardinal;
\item $\square^B_\lambda$ and $\diamondsuit(\lambda)$ both hold;
\item  $\p^-(\kappa,2,{\sq_\lambda},\kappa,\{E^\kappa_\lambda\})$ holds.
\end{itemize}

Then there is a family $\mathcal T$ of $2^\kappa$ many streamlined, normal, binary, splitting, full $\kappa$-trees
such that $\bigotimes\mathcal T'$ is $\kappa$-Souslin for every nonempty $\mathcal T'\in[\mathcal T]^{<\lambda}$.
\end{thm}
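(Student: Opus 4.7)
The plan is to run the construction of Theorem~\ref{thm44} with three adjustments driven by the weaker hypotheses: (a) obtain the $\diamondsuit^*_S(\kappa\textup{-trees})$-sequence from Proposition~\ref{cluba}(1) (whose hypotheses $\square^B_\lambda$, $\ch_\lambda$ and $\clubsuit(\lambda)$ are respectively the given $\square^B_\lambda$, the consequence $\kappa=2^\lambda\Rightarrow\ch_\lambda$, and the consequence $\diamondsuit(\lambda)\Rightarrow\clubsuit(\lambda)$), delivering $S:=E^\kappa_\lambda$; (b) allow the coherence of $\vec C$ to be only $\sq_\lambda$; and (c) restrict the product index to $\chi<\lambda$.

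To begin, I would set up all the same auxiliary parameters as in the proof of Theorem~\ref{thm44}: the $\diamondsuit^*_{E^\kappa_\lambda}(\kappa\textup{-trees})$-sequence $\langle f_\beta\mid\beta<\kappa\rangle$, the $\diamondsuit(H_\kappa)$-sequences $\langle\Omega_\beta\mid\beta<\kappa\rangle$ and $\langle R_i\mid i<\kappa\rangle$ (available since $\diamondsuit(\kappa)$ is inherited via Lemma~\ref{dtree}(1)), the $\lambda$-bounded proxy-principle witness $\vec C=\langle C_\alpha\mid\alpha<\kappa\rangle$, a well-ordering $\lhd$ of $H_\kappa$, and the auxiliary $\phi,\pi,\psi$. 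I would then recursively construct $\langle L^\eta\mid\eta\in{}^{<\kappa}2\rangle$ verbatim as in Theorem~\ref{thm44}, with only one change in the antichain-catching clause of the matrix recursion: the product-size parameter $\chi$ is restricted to nonzero cardinals below $\lambda$. Write $T^\eta:=\bigcup_{\alpha<\kappa}L^{\eta\restriction\alpha}$ for the resulting streamlined, normal, binary, splitting tree.

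The observation that legitimizes replacing $\sq$ by $\sq_\lambda$ is this: for every $\alpha\in E^\kappa_\lambda$, the $\lambda$-boundedness of $\vec C$ combined with $\cf(\alpha)=\lambda$ forces $\otp(C_\alpha)=\lambda$, so the second disjunct of the definition of $\sq_\lambda$ fails at $\alpha$; hence every $\beta\in\acc(C_\alpha)$ must satisfy $C_\beta\sq C_\alpha$, i.e., $C_\beta=C_\alpha\cap\beta$. This is precisely the coherence invoked in Claims~\ref{cl441},~\ref{c441} and~\ref{claim442} of Theorem~\ref{thm44}, so those claims---and in particular the matrix-inheritance $\mathbb B^\alpha_{<\beta}=\mathbb B^\beta$---transfer intact at every $\alpha\in E^\kappa_\lambda$. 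With this in hand, the Souslin verification for $\bigotimes_{j<\chi}T^{\eta_j}$ for any injective $\langle\eta_j\mid j<\chi\rangle$ with $0<\chi<\lambda$ is carried out exactly as in Claim~\ref{c444}: given a maximal antichain $A$, apply $\diamondsuit(H_\kappa)$ with parameter $\{\phi,A,\Omega,\langle T^{\eta_j}\mid j<\chi\rangle\}$, use $\p^-(\kappa,2,\sq_\lambda,\kappa,\{E^\kappa_\lambda\})$ to extract a suitable $\alpha\in E^\kappa_\lambda$ catching $A$, and conclude $A\s T\restriction\alpha$. The family $\mathcal T:=\{T^\eta\mid\eta\in{}^\kappa 2\}$ has cardinality $2^\kappa$.

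The main obstacle is the fullness verification at $\alpha\in E^\kappa_\lambda$: unlike the inaccessible setting of Theorem~\ref{thm43}, here $\mathcal B(T^\eta\restriction\alpha)$ may a priori contain up to $2^\lambda=\kappa$ branches, so one must show simultaneously that $L^\eta$ has size at most $\lambda$ and excludes at most one branch of $\mathcal B(T^\eta\restriction\alpha)$. The analogue of Claim~\ref{claim442}---made available by the matrix-inheritance observation above---identifies $L^\eta$ with $\{\mathbf b_x^\eta\mid x\in T^\eta\restriction C_\alpha\}$ (of cardinality at most $|T^\eta\restriction C_\alpha|\le\lambda$) up to the one reserved branch $f_\alpha$, and thereby delivers fullness at the critical levels. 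At limit $\alpha\notin E^\kappa_\lambda$ the inductive size and fullness bookkeeping reduces to the cardinal arithmetic $\lambda^{<\lambda}=\lambda$, which is itself a consequence of $\diamondsuit(\lambda)$.
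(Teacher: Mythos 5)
There is a genuine gap at the decisive point of your outline, namely the fullness and level-size verification at levels $\alpha\in E^\kappa_\lambda$. The principle $\diamondsuit^*_{E^\kappa_\lambda}(\kappa\textup{-trees})$ that you import from Proposition~\ref{cluba}(1) speaks only about completed $\kappa$-trees: for each streamlined $\kappa$-tree $T$ it supplies a club $D$, \emph{depending on $T$}, such that the guessing works at levels in $E^\kappa_\lambda\cap D$. In the successor setting this cannot be used inside the recursion. At a stage $\alpha\in E^\kappa_\lambda$ you must already know that every element of $\mathcal B(T^\eta\restriction\alpha)$, with at most one exception, is of the canonical form $\mathbf b^\eta_x$: fullness forces you to place all branches but at most one into $L^\eta$, the $\kappa$-tree requirement forces $|L^\eta|\le\lambda$, and a priori $|\mathcal B(T^\eta\restriction\alpha)|$ can be as large as $\lambda^\lambda=2^\lambda=\kappa$. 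You cannot invoke the club $D^\eta$ at this point (it is defined only from the completed tree $T^\eta$, whose existence as a $\kappa$-tree is exactly what is at stake), and even post hoc it covers only club-many $\alpha$, whereas a single bad level of size $\kappa$ destroys the construction; in Theorem~\ref{thm44} this problem does not arise because inaccessibility makes all levels small and full for free, and the guessing is needed only for sealing antichains at stationarily many levels. Your claim that the analogue of Claim~\ref{claim442} is ``made available by the matrix-inheritance observation'' is where the gap hides: the coherence $\mathbb B^\alpha_{<\beta}=\mathbb B^\beta$ (as in Claim~\ref{cl441}) shows that a branch which agrees with the guessing sequence stationarily often below $\alpha$ is canonical; it does not show that arbitrary branches are so guessed, which is the content, not the corollary, of the diamond hypothesis. (Two smaller inaccuracies: the proxy witness is not $\lambda$-bounded, though your coherence conclusion at $\alpha\in E^\kappa_\lambda$ survives since $\otp(C_\alpha)\ge\cf(\alpha)=\lambda$ already kills the second disjunct of $\sq_\lambda$; and running the matrix recursion ``verbatim at every limit level'' breaks at limits $\beta$ where $\otp(C_\beta)<\lambda$, which is why the paper restricts the matrices to the set $\Gamma$ of relevant levels and takes all branches elsewhere.)

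This is precisely why the paper's proof of Theorem~\ref{thm51} does not quote Proposition~\ref{cluba} but internalizes its proof. It fixes a $\diamondsuit(\lambda)$-sequence $\vec h$ on $\lambda$ and a $\square^B_\lambda$-sequence $\vec D$, carries along enumerations $\langle l^\eta_i\mid i<\lambda\rangle$ of the levels as they are constructed, and defines the reserved branch canonically, $f^{\eta\restriction\beta}:=\bigcup\{l^{\eta\restriction\delta}_i\mid \delta\in D_\beta,\ i=h_{\otp(D_\beta)}(\otp(D_\beta\cap\delta))\}$ for $\beta$ of small cofinality. With this choice one proves (Claim~\ref{claim512}) that at \emph{every} $\alpha\in E^\kappa_\lambda$ and for \emph{every} cofinal branch $\rho$ of $T^{\eta\restriction\alpha}$ the set $\{\beta<\alpha\mid\rho\restriction\beta=f^{\eta\restriction\beta}\}$ is stationary in $\alpha$ --- with no club caveat and no circularity --- whence every such branch except possibly $f^{\eta\restriction\alpha}$ equals some $\mathbf b^{\eta\restriction\alpha}_x$, yielding simultaneously $|L^{\eta\restriction\alpha}|\le\lambda$, fullness, and the sealing of antichains. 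To salvage your outline you would have to replace the black-box appeal to $\diamondsuit^*_{E^\kappa_\lambda}(\kappa\textup{-trees})$ by such an internal guessing device (or by a strengthened principle guaranteeing guessing at every $\alpha\in E^\kappa_\lambda$ for all branches of the restricted tree, which is what the internal device delivers). The remaining ingredients of your proposal --- the $\sq_\lambda$-coherence observation at points of $E^\kappa_\lambda$, the restriction to products of size $\chi<\lambda$, and the $\lambda^{<\lambda}=\lambda$ bookkeeping at levels of cofinality $<\lambda$ --- do match the paper's argument.
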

\begin{proof} Let $\vec D=\langle D_\beta\mid\beta<\kappa\rangle$ be a $\lambda$-bounded $\sq_\lambda$-coherent $C$-sequence over $\kappa$.
Using $\diamondsuit(\lambda)$, fix a sequence of functions $\vec h=\langle h_{\beta}:{\beta}\rightarrow {\beta}\mid {\beta}<\lambda\rangle$
such that, for every $h:\lambda\rightarrow\lambda$, the set $\{{\beta}<\lambda\mid h\restriction {\beta}=h_{\beta}\}$ is stationary.
As $2^\lambda=\lambda^+$, the main result of \cite{Sh:922} implies that $\diamondsuit(\lambda^+)$ holds. Thus,
using Fact~\ref{def_Diamond_H_kappa}, fix sequences $\langle \Omega_\beta\mid\beta<\kappa\rangle$ and $\langle R_i\mid i<\kappa\rangle$
together witnessing $\diamondsuit(H_\kappa)$.
Fix a sequence $\vec C=\langle C_\alpha\mid \alpha<\kappa\rangle$ witnessing $\p^-(\kappa,2,{\sq_\lambda},\kappa,\{E^\kappa_\lambda\})$.
Without loss of generality, $0\in C_\alpha$ for all nonzero $\alpha<\kappa$.
Put
\begin{itemize}
\item $B:=\acc(\kappa)\cap E^\kappa_{<\lambda}$, and
\item $\Gamma:=E^\kappa_\lambda\cup\bigcup\{ \acc(C_\alpha)\mid \alpha\in E^\kappa_\lambda\}$,
\end{itemize}
and note that for all $\alpha\in\Gamma$ and $\beta\in\acc(C_\alpha)$, it is the case that $\beta\in\Gamma\cap B$ and $C_\beta\sq C_\alpha$.

Let $\pi:\kappa\rightarrow\kappa$ be such that $\alpha\in R_{\pi(\alpha)}$ for all $\alpha<\kappa$.
As $2^{<\kappa}=\kappa$, we may let $\lhd$ be some well-ordering of $H_\kappa$ of order-type $\kappa$,
and let $\phi:\kappa\leftrightarrow H_\kappa$ witness the isomorphism $(\kappa,\in)\cong(H_\kappa,\lhd)$.
Put $\psi:=\phi\circ\pi$.

\medskip

We shall follow the construction of Theorem~\ref{thm44} as much as possible,
where the main difference is that instead of assuming $\diamondsuit^*_{S}(\kappa\textup{-trees})$,
for each $\eta\in{}^\kappa2$,
we shall gradually identify a sequence $\vec f^{\eta}:=\langle f^{\eta\restriction\beta}\mid \beta\in B\rangle$
that will play the role of an optimal $\diamondsuit^*_{E^\kappa_\lambda,B}(\kappa\textup{-trees})$-sequence
relativized to the outcome tree $T^\eta$.

Altogether, we shall construct a system $\langle (L^\eta,f^\eta)\mid \eta\in {}^{<\kappa}2\rangle$ such that,
for all $\alpha<\kappa$ and $\eta\in {}^\alpha2$:
\begin{enumerate}
\item[(i)] $L^\eta\in[{}^\alpha2]^{\le\lambda}$,
and we shall fix some enumeration $\langle l^\eta_i\mid i<\lambda\rangle$ of $L^\eta$;

\item[(ii)] for every $\beta<\alpha$, $L^{\eta\restriction\beta}=\{t\restriction\beta\mid t\in L^\eta\}$;
\item[(iii)] $f^\eta:=\bigcup\{ l_i^{\eta\restriction\delta}\mid \delta\in D_\alpha, i=h_{\otp(D_\alpha)}(\otp(D_\alpha\cap\delta))\}$ if $\alpha\in B$,\footnote{To demystify the definition of $f^\eta$, recall the proof of Proposition~\ref{cluba}.}
and $f^\eta:=\emptyset$, otherwise.

\end{enumerate}

By the same convention of the proof of Theorem~\ref{thm44},
for every $\alpha\in\acc(\kappa)$ such that $\langle L^\eta\mid \eta\in {}^{<\alpha}2\rangle$ has already been defined,
for every $\eta\in {}^\alpha2$, we shall denote $T^\eta:=\bigcup_{\beta<\alpha}L^{\eta\restriction\beta}$.

We now turn to the actual construction of the system $\langle (L^\eta,f^\eta)\mid \eta\in {}^{<\kappa}2\rangle$.
The construction is by recursion over $\dom(\eta)$.
We start by letting $L^\emptyset:=\{\emptyset\}$.
For every $\alpha<\kappa$ such that $\langle L^\eta\mid \eta\in {}^\alpha2\rangle$ has already been defined to satisfy our promises,
for every $\eta\in {}^{\alpha+1}2$, let
$$L^\eta:=\{ t{}^\smallfrown\langle0\rangle,t{}^\smallfrown\langle1\rangle\mid t\in L^{\eta\restriction\alpha}\}.$$
For all $\eta\in {}^{\alpha+1}2$, it is clear that $|L^\eta|\le\lambda$, so
we may fix some enumeration $\langle l^\eta_i\mid i<\lambda\rangle$ of $L^\eta$ as dictated by Ingredient~(i).

Suppose now that $\alpha\in\acc(\kappa)$ is such that $\langle L^\eta\mid \eta\in {}^{<\alpha}2\rangle$ has already been defined.
In particular, for every $\eta\in{}^{<\alpha}2\cup{}^\alpha2$, the object $f^\eta$ is determined by Ingredient~(iii).

If $\alpha\in\Gamma$, then for all $\eta\in {}^\alpha2$ and $x\in T^\eta\restriction C_\alpha$,
we shall soon identify some element $\mathbf b_x^{\eta}$ of $\mathcal B(T^\eta)$ extending $x$.
For all $\eta\in {}^\alpha2$, we promise to let:
\begin{equation}\tag{$\star$}\label{promise5}L^\eta:=\begin{cases}\mathcal B(T^\eta),&\text{if }\alpha\notin\Gamma\text{ or if }f^\eta=\mathbf{b}_x^\eta\text{ for some }x\in T^\eta\restriction C_\alpha;\\
\mathcal B(T^\eta)\setminus\{f^\eta\},&\text{otherwise}.\end{cases}\end{equation}

In particular, if $\alpha\notin\Gamma$, then our definition of $\langle L^\eta\mid \eta\in {}^\alpha2\rangle$ is complete (pun intended).
Next, suppose that $\alpha\in\Gamma$. As in the proof of Theorem~\ref{thm44}, we shall first define a matrix
$$\mathbb B^\alpha=\langle b_x^{\alpha,\eta}\mid \beta\in C_\alpha, \eta\in {}^\beta2, x\in T^\eta\restriction C_\alpha\cap(\beta+1) \rangle$$
ensuring that $x\s b_x^{\alpha,\bar\eta}\s b_x^{\alpha,\eta}\in L^\eta$ whenever $\bar\eta\s\eta$.
Then, for all $\eta\in {}^\alpha2$ and $x\in T^\eta\restriction C_\alpha$,
it will follow that $\mathbf b_x^{\eta}:=\bigcup_{\beta\in C_\alpha\setminus\dom(x)}b_x^{\alpha,\eta\restriction\beta}$ is an element of $\mathcal B(T^\eta)$ extending $x$,
and this is the element we will be using when defining $L^\eta$ as per \eqref{promise5}.

We now turn to the recursive construction of the matrix $\mathbb B^\alpha$.
Suppose that $\beta\in C_\alpha$ is such that
$$\mathbb B^\alpha_{<\beta}=\langle b_x^{\alpha,\eta}\mid \bar\beta\in C_\alpha\cap\beta, \eta\in {}^{\bar\beta}2, x\in T^\eta\restriction C_\alpha\cap(\bar\beta+1) \rangle$$
has already been defined.

$\br$ For all $\eta\in {}^\beta2$ and $x\in T^\eta$ such that $\dom(x)=\beta$, let $b_x^{\alpha,\eta} := x$.

$\br$ For all $\eta\in {}^\beta2$ and $x\in T^\eta$ such that $\dom(x)<\beta$,
there are two main cases to consider:

$\br\br$ Suppose that $\beta\in \nacc(C_\alpha)$ and denote $\beta^-:=\sup(C_\alpha\cap\beta)$.

$\br\br\br$ If	$\beta\in \acc(\kappa)$
and there exists a nonzero cardinal $\chi<\lambda$ such that all of the following hold:
\begin{enumerate}
\item There exists a sequence $\langle \eta_j\mid j<\chi\rangle$ of elements of ${}^\beta2$,
and a maximal antichain $A$ in the product tree $\bigotimes_{j<\chi}T^{\eta_j}$ such that
$\Omega_\beta=\{(\langle \eta_j\restriction\epsilon\mid j<\chi\rangle,A\cap{}^\chi(^\epsilon2))\mid \epsilon<\beta\}$;
\item 				$\psi(\beta)$ is a sequence $\langle x_j\mid j<\chi\rangle$ such that $x_j\in T^{\eta_j\restriction\beta^-}\restriction(C_\alpha\cap\beta^-)$ for every $j<\chi$;
\item There exists a unique $j<\chi$ such that $\eta_j=\eta$ and $x_j=x$.
\end{enumerate}

In this case, by Clauses (1) and (2), the following set is nonempty
$$Q^{\alpha,\beta} := \{ \vec t\in \prod\nolimits_{j<\chi}L^{\eta_j}\mid \exists\vec s\in A\forall j<\chi[ (\vec s(j)\cup b^{\alpha,\eta_j\restriction\beta^-}_{x_j})\s\vec t(j)]\},$$
so we let $\vec t:=\min(Q^{\alpha,\beta},\lhd)$,
and then we let $b^{\alpha,\eta}_x:=\vec t(j)$ for the unique index $j$ of Clause~(3).
It follows that $b^{\alpha,\eta\restriction\beta^-}_{x}\s\vec t(j)= b^{\alpha,\eta}_x$.

$\br\br\br$ Otherwise, let $b_x^{\alpha,\eta}$ be the $\lhd$-least element of $L^{\eta}$ extending $b_x^{\alpha,\eta\restriction\beta^-}$.

$\br\br$ Suppose that $\beta\in\acc(C_\alpha)$. Then we define $b_x^{\alpha,\eta} := \bigcup\{b_x^{\alpha,\eta\restriction\bar\beta}\mid \bar\beta\in C_\alpha\cap\beta\setminus\dom(x)\}$.
As $\beta\in\acc(C_\alpha)$ and $\alpha\in\Gamma$, we get that $\beta\in\Gamma$ and $C_\beta=C_\alpha\cap\beta$.
A verification similar to that of Claim~\ref{cl441} yields that $\mathbb B^\alpha_{<\beta}=\mathbb B^\beta$
and $b_x^{\alpha,\eta}=\mathbf{b}_x^{\eta}$ so that the former indeed belongs to $L^{\eta}$.

This completes the definition of the matrix $\mathbb B^\alpha$,
from which we derive $\mathbf b_x^\eta$ for all $\eta\in {}^\alpha2$ and $x\in T^\eta\restriction C_\alpha$,
and then we define $L^\eta$ by adhering to \eqref{promise2}.

\begin{claim}\label{claim511} For all $\eta\in{}^\alpha2$ and $t\in \{\mathbf{b}_x^\eta\mid x\in T^\eta\restriction C_\alpha\}$, there exists a tail of $\varepsilon\in C_\alpha$ such that $t=\mathbf b^\eta_{t\restriction\varepsilon}$.
\end{claim}
\begin{proof} This follows from the canonical nature of the construction,
and is left to the reader.
\end{proof}

It will follow from the upcoming claim that for every $\eta\in{}^\alpha2$, $|L^\eta|\le\lambda$,
so we may fix an enumeration $\langle l^\eta_i\mid i<\lambda\rangle$ of $L^\eta$,
as dictated by Ingredient~(i).

\begin{claim}\label{claim512} Let $\eta\in{}^\alpha2$.
\begin{itemize}
\item[(a)] $|L^\eta|\le\lambda$;
\item[(b)] If $\alpha\in E^\kappa_\lambda$, then for every $\rho\in L^\eta$,
$\{\beta<\alpha \mid \rho\restriction\beta=f^{\eta\restriction\beta}\}$ is stationary in $\alpha$;
\item[(c)] If $\alpha\in E^\kappa_\lambda$, then $L^\eta=\{\mathbf{b}_x^\eta\mid x\in T^\eta\restriction C_\alpha\}$.
\end{itemize}
\end{claim}
\begin{proof} (a) By Ingredient~(i) thus far, $|\bigcup_{\beta<\alpha}L^{\eta\restriction\beta}|\le\lambda$. Therefore:
\begin{itemize}
\item[$\br$] If $\cf(\alpha)<\lambda$, then $\mathcal B(T^\eta)\le\lambda^{\cf(\alpha)}\le\lambda^{<\lambda}=\lambda$,
since $\diamondsuit(\lambda)$ holds.
\item[$\br$] If $\cf(\alpha)=\lambda$, then the conclusion will follow from clause~(c) below.
\end{itemize}

(b) Suppose $\alpha\in E^\kappa_\lambda$, and let $\pi_\alpha:\lambda\rightarrow D_\alpha$ denote the inverse collapsing map of $D_\alpha$.
Now, given $\rho\in L^\eta$, by Ingredient~(ii), for every $\Lambda<\lambda$, $\rho\restriction\pi_\alpha(\Lambda)$ is in $L^{\eta\restriction\pi_\alpha(\Lambda)}$,
so we may define a function $h:\lambda\rightarrow\lambda$ via:
$$h(\Lambda):=\min\{ i<\lambda\mid \rho\restriction \pi_\alpha(\Lambda)=l_i^{\eta\restriction\pi_\alpha(\Lambda)}\}.$$
By the choice of $\vec h$, the set $b:=\{\bar\beta\in\acc(\lambda)\mid h\restriction\bar\beta=h_{\bar\beta}\}$ is stationary.
Consequently, the set $B^*:=\pi_\alpha[b]$ is a stationary subset of $B\cap\alpha$.
Let $\beta\in B^*$. Pick ${\bar\beta}\in\acc(\lambda)$ such that $\beta=\pi_\alpha({\bar\beta})$.
Then $\beta\in\acc(D_\alpha)\s B$. As $\vec D$ is $\sq_\lambda$-coherent,
$D_\beta=D_\alpha\cap\beta$,
and hence Ingredient~(iii) yields that:
\begin{align*}
f^{\eta\restriction\beta}&=\bigcup\{ l^{\eta\restriction\delta}_i\mid \delta\in D_\beta, i=h_{\otp(D_\beta)}(\otp(D_\beta\cap\delta))\}\\
&=\bigcup\{ l^{\eta\restriction\delta}_i\mid \delta\in D_\beta, i=h_{{\bar\beta}}(\otp(D_\beta\cap\delta))\}\\
&=\bigcup\{ l^{\eta\restriction\delta}_i\mid \delta\in D_\beta, i=h(\otp(D_\beta\cap\delta))\}\\
&=\bigcup\{ l^{\eta\restriction\pi_\alpha(\Lambda)}_i\mid \Lambda<{\bar\beta}, i=h(\Lambda)\}\\
&=\bigcup\{ \rho\restriction\pi_\alpha(\Lambda)\mid \Lambda<{\bar\beta}\}\\
&=\bigcup\{ \rho\restriction\delta\mid \delta\in D_\beta\}\\
&=\rho\restriction\beta,
\end{align*}
as sought.

(c) Suppose $\alpha\in E^\kappa_\lambda$. Now, given $\rho\in L^\eta$,
by Clause~(b), the following set is stationary in $\alpha$:
$$B_\rho:=\{\beta\in\acc(C_\alpha) \mid \rho\restriction\beta=f^{\eta\restriction\beta}\}.$$

Note that $B_\rho\s\acc(C_\alpha)\s\Gamma$.
So, by \eqref{promise5},
for every $\beta\in B_\rho$,
since $f^{\eta\restriction\beta}=\rho\restriction\beta$ is in $L^{\eta\restriction\beta}$,
there must exist some $x\in T^{\eta\restriction\beta}\restriction C_\beta$ such that $f^{\eta\restriction\beta}=\mathbf{b}_{x}^{\eta\restriction\beta}$.
Then, by Claim~\ref{claim511} and Fodor's lemma for ordinals of uncountable cofinality, we may fix a large enough $\varepsilon\in C_\alpha$ such that
$B_\rho^\varepsilon:=\{ \beta\in B_\rho\mid f^{\eta\restriction\beta}=\mathbf{b}_{\rho\restriction\varepsilon}^{\eta\restriction\beta}\}$ is stationary.
Denote $x:=\rho\restriction\varepsilon$, so that $\rho\restriction\beta=\mathbf b^{\eta\restriction\beta}_{x}$ for every $\beta\in B_\rho^\varepsilon$.
Furthermore, for every $\beta\in B_\rho^\varepsilon$,
since $C_\alpha\cap\beta=C_\beta$,
a verification similar to that of Claim~\ref{cl441} (i.e., $\mathbb B^\alpha_{<\beta}=\mathbb B^\beta$)
implies that $\mathbf b^{\eta}_{x}\restriction\beta=\mathbf b^{\eta\restriction\beta}_{x}$.
Altogether, $\rho=\mathbf b_x^\alpha$.
\end{proof}

At the end of the above process, for every $\eta\in{}^\kappa2$, we have obtained a streamlined tree $T^\eta:=\bigcup_{\alpha<\kappa}L^{\eta\restriction\alpha}$
whose $\alpha^{\text{th}}$ level is $L^{\eta\restriction\alpha}$.

To see that the family of trees $\langle T^\eta\mid \eta\in {}^\kappa2\rangle$ is as sought,
let $\chi<\lambda$ be a nonzero cardinal,
and fix a sequence $\langle \eta_j\mid j<\chi\rangle$ of elements of ${}^\kappa2$.
Let $\mathbf T=(T,<_T)$ denote the product tree $\bigotimes_{j<\sigma}T^{\eta_j}$.
As $\lambda^{\chi}=\lambda<\kappa$, $\mathbf T$ is a (splitting, normal) $\kappa$-tree.
Thus, to show that $\mathbf T$ is a $\kappa$-Souslin tree, it suffices to establish that it has no antichains of size $\kappa$.
To this end, let $A$ be a maximal antichain in $\mathbf  T$.

Set $\Omega:=\{(\langle \eta_j\restriction\epsilon\mid j<\chi\rangle,A\cap{}^\chi(^\epsilon2))\mid \epsilon<\kappa\}$.
As an application of $\diamondsuit(H_\kappa)$,
using the parameter $p:=\{\phi,A,\Omega,\langle T^{\eta_j}\mid j<\chi\rangle\}$,
we get that for every $i<\kappa$, the following set is cofinal (in fact, stationary) in $\kappa$:
$$B_i:=\{\beta\in R_i\cap\acc(\kappa)\mid \exists \mathcal M\prec H_{\kappa^+}\,(p\in \mathcal M, \mathcal M\cap\kappa=\beta, \Omega_\beta=\Omega\cap \mathcal M)\}.$$

Fix a large enough $\delta<\kappa$ for which the map $j\mapsto\eta_j\restriction\delta$ is injective over $\chi$.
By the choice of $\vec C$, we may find an ordinal $\alpha\in  E^\kappa_\lambda$ above $\delta$ such that, for all $i<\alpha$,
$$\sup(\nacc(C_\alpha)\cap B_i)=\alpha.$$

Set $\bar\eta_j:=\eta_j\restriction\alpha$ for each $j<\chi$,
and note that $T\restriction\alpha=\bigotimes_{j<\chi}T^{\bar\eta_j}$.
\begin{claim}	 $A\s T\restriction\alpha$. In particular, $|A|<\kappa$.
\end{claim}
\begin{proof} This is very similar to the proof of Claim~\ref{c444}. Let $\vec y=\langle y_j\mid j<\chi\rangle$ be an arbitrary element of $T_\alpha$,
and we shall show that it extends some element of $A$.
For each $j<\chi$, since $\alpha\in E^\kappa_\lambda$,
Claim~\ref{claim512}(c) implies that we may find some $x_j\in T^{\bar\eta_j}\restriction C_\alpha$ such that $y_j=\mathbf b_{x_j}^{\bar\eta_j}$.
By Claim~\ref{claim511} and $\cf(\alpha)=\lambda>\chi$,
we may assume the existence of $\gamma<\alpha$ such that $\dom(x_j)=\gamma$ for all $j<\chi$.
In particular, we may find some $i<\alpha$ such that $\phi(i)$
is equal to $\vec x:=\langle x_j\mid j<\chi\rangle$.
Pick $\beta\in \nacc(C_\alpha)\cap B_i$ for which $\beta^-:=\sup(C_\alpha\cap\beta)$ is bigger than $\max\{\gamma,\delta\}$.
Then:
\begin{itemize}
\item $\psi(\beta)=\vec x$,
\item $\langle \bar\eta_j\restriction\beta\mid j<\chi\rangle$ is injective,
\item $\Omega_\beta=\{(\langle \bar\eta_j\restriction\epsilon\mid j<\chi\rangle,A\cap{}^\chi(^\epsilon2))\mid \epsilon<\beta\}$, and
\item $A\cap{}^{<\beta}2$ is a maximal antichain in $T\restriction\beta=\bigotimes_{j<\chi}T^{\bar\eta_j\restriction\beta}$.
\end{itemize}

It thus follows that for every $j<\chi$,
$b^{\alpha,\bar\eta_j\restriction\beta}_{x_j}=\vec t(j)$, where $\vec t=\min(Q^{\alpha,\beta},\lhd)$.
In particular, we may fix some $\vec s\in A$ such that, for every $j<\chi$,
$$(\vec s(j)\cup b^{\alpha,\bar\eta_j\restriction\beta^-}_{x_j})\s\vec t(j)=b^{\alpha,\bar\eta_j\restriction\beta}_{x_j}\s\mathbf b_{x_j}^{\bar\eta_j}=y_j.$$
So $\vec y$ extends an element of $A$, as sought.
\end{proof}
This completes the proof.
\end{proof}
\begin{remark} \begin{enumerate}
\item Claim~\ref{claim512}(c) implies that the trees constructed above fall into the class of trees obtained using the \emph{microscopic approach} as a transfinite application of \emph{actions}
to control various features of the outcome trees (see \cite[Definition~6.5]{paper23}).
By embedding additional calls for actions, we can easily ensure that the above full $\lambda^+$-Souslin trees be $\lambda$-free or specializable via a $\lambda$-closed $\lambda^{+}$-cc notion forcing.
\item The hypotheses of Theorem~\ref{thm51} are all compatible with $\lambda$ being supercompact.
Indeed, starting with a Laver-indestructible supercompact $\lambda$,
first use Baumgartner's $\lambda$-directed-closed notion of forcing to add a $\square^B_\lambda$-sequence,
and then force with $\add(\lambda^+,1)$ to arrange $2^\lambda=\lambda^+$.
Finally, use \cite[Definition~3.16]{paper28} to add a $\p^-(\kappa,2,{\sq_\lambda},\kappa,\{E^\kappa_\lambda\})$-sequence via a $\lambda$-directed closed $\kappa$-strategically closed forcing.
Since every supercompact cardinal is subtle, $\diamondsuit(\lambda)$ will hold for free.
\end{enumerate}
\end{remark}

We now arrive at Theorem~\ref{thmc}:

\begin{cor} 	Suppose that:
\begin{itemize}
\item $\kappa=\lambda^+$ for $\lambda$ a regular uncountable cardinal;
\item $\sd_\lambda$ and $\diamondsuit(\lambda)$ both hold.
\end{itemize}

Then there exists a sequence  $\langle T^\eta \mid\eta<2^\kappa\rangle$ of
streamlined, normal, binary, splitting, full $\kappa$-trees
such that for every nonzero cardinal $\chi<\lambda$, for every	injective sequence $\langle \eta_j\mid j<\chi\rangle$ of elements of $2^\kappa$, the product tree $\bigotimes_{j<\chi}T^{\eta_j}$ is $\kappa$-Souslin.
\end{cor}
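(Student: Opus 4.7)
The plan is to derive the corollary as a direct application of Theorem~\ref{thm51}. That theorem already produces a family $\mathcal T$ of $2^\kappa$ many streamlined, normal, binary, splitting, full $\kappa$-trees with the property that $\bigotimes \mathcal T'$ is $\kappa$-Souslin for every nonempty $\mathcal T' \in [\mathcal T]^{<\lambda}$. Fixing any bijection to enumerate $\mathcal T$ as $\langle T^\eta \mid \eta < 2^\kappa\rangle$, an injective sequence $\langle \eta_j \mid j<\chi\rangle$ of elements of $2^\kappa$ with $0<\chi<\lambda$ selects a subfamily of $\mathcal T$ of size $\chi$, whose product is $\kappa$-Souslin by the conclusion of Theorem~\ref{thm51}. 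So the whole corollary reduces to checking the hypotheses of Theorem~\ref{thm51} from the assumptions $\sd_\lambda$ and $\diamondsuit(\lambda)$.

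The hypotheses to be verified are: (i) $\kappa=\lambda^+=2^\lambda$, (ii) $\square^B_\lambda$, (iii) $\diamondsuit(\lambda)$, and (iv) $\p^-(\kappa,2,\sq_\lambda,\kappa,\{E^\kappa_\lambda\})$. Clause~(iii) is directly given. The remaining three are precisely the ingredients packaged into the principle $\sd_\lambda$: a $\lambda$-bounded $\sq_\lambda$-coherent $C$-sequence $\langle C_\alpha\mid\alpha<\lambda^+\rangle$, which on its own is a $\square^B_\lambda$-sequence, and which enjoys the proxy guessing feature of Definition~\ref{proxydef} relative to $E^{\lambda^+}_\lambda$, namely that for every sequence $\langle B_i\mid i<\kappa\rangle$ of cofinal subsets of $\kappa$, stationarily many $\alpha\in E^\kappa_\lambda$ satisfy $\sup(\nacc(C_\alpha)\cap B_i)=\alpha$ for all $i<\alpha$. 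The cardinal identity $2^\lambda=\lambda^+$ of (i) is built into the hypothesis through the same guessing package, since the existence of a $\sq_\lambda$-coherent $C$-sequence whose nonaccumulation points trace out all cofinal subsets of $\kappa$ forces $2^\lambda\le\lambda^+$ by coding every $X\in\mathcal P(\lambda)$ via its interaction with the $C_\alpha$'s at a guessed level; the reverse inequality is immediate from $\diamondsuit(\lambda)$.

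The main obstacle in this reduction is purely notational: unpacking $\sd_\lambda$ into clauses~(i), (ii), and (iv) simultaneously on the \emph{same} $C$-sequence so that Theorem~\ref{thm51} can be applied without reshuffling the combinatorial data. Once that is done, Theorem~\ref{thm51} yields $\mathcal T$ and the enumeration completes the proof. We note also that, by Proposition~\ref{weakpr}, any two distinct members $T^{\eta_0},T^{\eta_1}$ of the family are automatically pairwise non-weakly-embeddable, since their product is $\kappa$-Souslin; so the family produced is as rigid as one could ask for.
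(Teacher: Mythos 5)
Your overall reduction is the right one and matches the paper's: apply Theorem~\ref{thm51} and enumerate the resulting family $\mathcal T$ as $\langle T^\eta\mid\eta<2^\kappa\rangle$, so that an injective sequence of $\chi<\lambda$ many indices selects a subfamily in $[\mathcal T]^{<\lambda}$ whose product is $\kappa$-Souslin. The gap is in how you verify the hypotheses of Theorem~\ref{thm51}. You treat $\sd_\lambda$ as if it were, by definition, a $\lambda$-bounded $\sq_\lambda$-coherent $C$-sequence already enjoying the guessing feature of $\p^-(\kappa,2,{\sq_\lambda},\kappa,\{E^\kappa_\lambda\})$. That is not what $\sd_\lambda$ asserts: it is Gray's square-with-built-in-diamond principle, and extracting the proxy principle from it is a nontrivial theorem rather than an unpacking of the definition. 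The paper's route is to cite \cite[Theorem~3.6]{paper22} for $\sd_\lambda\Rightarrow\p(\kappa,2,{\sq},\kappa,\{E^\kappa_\lambda\})$, and then observe that $\sq$-coherence entails $\sq_\lambda$-coherence, so the required $\p^-(\kappa,2,{\sq_\lambda},\kappa,\{E^\kappa_\lambda\})$ follows; similarly $\square^B_\lambda$ comes from $\sd_\lambda\Rightarrow\square_\lambda\Rightarrow\square^B_\lambda$. Your proposal asserts these facts but supplies neither the citation nor an argument.

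The more serious flaw is your verification of $\kappa=2^\lambda$. You claim that a $\sq_\lambda$-coherent $C$-sequence whose nonaccumulation points trace all cofinal subsets of $\kappa$ forces $2^\lambda\le\lambda^+$ ``by coding every $X\in\mathcal P(\lambda)$ via its interaction with the $C_\alpha$'s at a guessed level''. No such coding is given, and none is available from the stated feature: the guessing in $\p^-$ concerns cofinal subsets of $\kappa$ and says nothing about subsets of $\lambda$; this is precisely why the paper keeps $\p^-$ and $\diamondsuit(\kappa)$ as separate components of $\p$ in Definition~\ref{proxydef} --- the $C$-sequence alone does not carry this cardinal-arithmetic information. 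The correct source of clause (i) is simply that $\sd_\lambda$ implies $\ch_\lambda$ (its diamond component yields $\diamondsuit(\lambda^+)$, hence $2^\lambda=\lambda^+$), which is the first line of the paper's proof. Also, the ``reverse inequality'' $2^\lambda\ge\lambda^+$ is Cantor's theorem and has nothing to do with $\diamondsuit(\lambda)$; the latter is needed only as hypothesis (iii) of Theorem~\ref{thm51}. With these repairs --- $\sd_\lambda\Rightarrow\ch_\lambda$, $\sd_\lambda\Rightarrow\square_\lambda\Rightarrow\square^B_\lambda$, and $\sd_\lambda\Rightarrow\p(\kappa,2,{\sq},\kappa,\{E^\kappa_\lambda\})$ via \cite[Theorem~3.6]{paper22} --- your reduction becomes the paper's proof; the closing remark about non-weak-embeddability via Proposition~\ref{weakpr} is correct but not needed for the statement.
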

\begin{proof} $\sd_\lambda$ implies $\ch_\lambda$.
It also implies $\square_\lambda$ that implies $\square_\lambda^B$.
By \cite[Theorem~3.6]{paper22}, $\sd_\lambda$ implies $\p(\kappa,2,{\sq},\kappa,\{E^\kappa_\lambda\})$.
Now, appeal to Theorem~\ref{thm51}.
\end{proof}

As said before, we opened this section with the most general construction.
If all one wants is a single full $\kappa$-Souslin tree (e.g., with $\kappa=\aleph_2$), then this may be obtained from the following relaxed hypotheses.\footnote{By Proposition~\ref{friedman}, the hypothesis is moreover optimal.}

\begin{thm}\label{thm53}
Suppose that:
\begin{enumerate}
\item $\kappa=\lambda^+=2^\lambda$ for a regular uncountable cardinal $\lambda$;
\item $\square_\lambda$ and $\diamondsuit(\lambda)$ both hold.
\end{enumerate}

Then there exists a streamlined, normal, prolific, full $\kappa$-Souslin tree.
\end{thm}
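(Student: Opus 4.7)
The plan is to carry out a single-tree variant of the construction in Theorem~\ref{thm51}, modified so that successor levels are prolific rather than binary. The first task is to verify that the hypotheses supply all the combinatorial inputs that fed Theorem~\ref{thm51}. From $\square_\lambda$ we obtain a $\sq$-coherent (hence a fortiori $\sq_\lambda$-coherent) $\lambda$-bounded $C$-sequence $\vec D = \langle D_\alpha \mid \alpha < \kappa\rangle$. From $2^\lambda = \lambda^+$ and Shelah's theorem (\cite{Sh:922}) we get $\diamondsuit(\kappa)$, hence $\diamondsuit(H_\kappa)$ by Fact~\ref{def_Diamond_H_kappa}; this gives the pair $\langle\Omega_\beta\mid\beta<\kappa\rangle$ and the partition $\langle R_i\mid i<\kappa\rangle$. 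From $\diamondsuit(\lambda)$ we get the local guessing sequence $\vec h=\langle h_\beta\mid\beta<\lambda\rangle$ for Ingredient (iii) in the proof of Theorem~\ref{thm51}. Finally, $\square_\lambda+\diamondsuit(\kappa)$ is enough by the results of \cite{paper22} (and the argument deriving $\p^-$ from squares-plus-diamond) to yield $\p^-(\kappa,2,\sq,\kappa,\{E^\kappa_\lambda\})$, which by \cite[Lemma~3.8]{paper32} implies $\p^-(\kappa,2,\sq_\lambda,\kappa,\{E^\kappa_\lambda\})$.

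With these principles in hand, I would carry out the recursive construction of Theorem~\ref{thm51} verbatim, with the following two simplifications. First, drop the $\eta$-parameter entirely: instead of building a system $\langle L^\eta\mid\eta\in{}^{<\kappa}2\rangle$, build a single sequence $\langle T_\alpha\mid\alpha<\kappa\rangle$ with fixed enumeration $\langle t^\alpha_i\mid i<\lambda\rangle$ of $T_\alpha$. Second, at successor stages set $T_{\alpha+1}:=\{t{}^\smallfrown\langle i\rangle\mid t\in T_\alpha,\ i<\max\{\omega,\alpha\}\}$ to ensure prolificity (noting that $|T_\alpha|\le\lambda$ by the analogue of Claim~\ref{claim512}(a), so enumerating each level by $\lambda$ remains fine). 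At limit $\alpha\in B=\acc(\kappa)\cap E^\kappa_{<\lambda}$ define $f^\alpha:=\bigcup\{t^\delta_{h_{\otp(D_\alpha)}(\otp(D_\alpha\cap\delta))}\mid \delta\in D_\alpha\}$ (and $f^\alpha:=\emptyset$ otherwise). For $\alpha\in\Gamma:=E^\kappa_\lambda\cup\bigcup\{\acc(C_\alpha)\mid\alpha\in E^\kappa_\lambda\}$ assemble the matrix $\mathbb B^\alpha=\langle b^\alpha_x\mid\beta\in C_\alpha,\ x\in T\restriction C_\alpha\cap(\beta+1)\rangle$ using the same successor/limit recipe from Theorem~\ref{thm51} (where at non-accumulation points of $C_\alpha$ one either sealed a maximal antichain guessed by $\Omega_\beta$ using $\psi(\beta)=\phi(\pi(\beta))$ or took the $\lhd$-least extension), thereby producing branches $\mathbf b^\alpha_x$ through $T\restriction\alpha$. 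Finally, put $T_\alpha:=\mathcal B(T\restriction\alpha)$ if $\alpha\notin\Gamma$ or $f^\alpha=\mathbf b^\alpha_x$ for some $x$, and otherwise $T_\alpha:=\mathcal B(T\restriction\alpha)\setminus\{f^\alpha\}$; this step is what secures fullness, since at every limit $\alpha$ at most one vanishing $\alpha$-branch is created.

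Verifying that this yields a normal prolific streamlined full $\kappa$-tree is immediate from the recipe. The Souslin property is proved exactly as in Theorem~\ref{thm51}: given a maximal antichain $A$, use $\diamondsuit(H_\kappa)$ with the parameter $p:=\{\phi,A\}$ to produce stationary sets $B_i$ on which $\Omega_\beta$ codes $A\cap(T\restriction\beta)$ as a maximal antichain in $T\restriction\beta$; by the guessing feature of $\vec C$ fix $\alpha\in E^\kappa_\lambda$ with $\sup(\nacc(C_\alpha)\cap B_i)=\alpha$ for all $i<\alpha$; then for an arbitrary $y\in T_\alpha$ invoke the analogues of Claims~\ref{claim511} and \ref{claim512}(c) to write $y=\mathbf b_x^\alpha$ for some $x\in T\restriction C_\alpha$, and catch a $\beta\in\nacc(C_\alpha)\cap B_{i}$ with $\phi(i)=x$ and $\beta^->\dom(x)$ so that the sealing step at $\beta$ forces $y$ to extend some element of $A$; hence $A\s T\restriction\alpha$ has size $<\kappa$.

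The main obstacle is neither the bookkeeping nor the tree construction itself, which copies Theorem~\ref{thm51} up to notational triviality, but the derivation of the precise proxy principle $\p^-(\kappa,2,\sq_\lambda,\kappa,\{E^\kappa_\lambda\})$ from the raw hypotheses $\square_\lambda+\diamondsuit(\lambda)+2^\lambda=\lambda^+$; one must justify that the diamond obtained on $\kappa$ (via \cite{Sh:922}) can be amalgamated with the $\square_\lambda$-sequence to produce a $\sq$-coherent $C$-sequence with the full $\kappa$-many guessing strength over $E^\kappa_\lambda$, rather than merely over $\{\kappa\}$. Once this is in place, the verification that the $\diamondsuit(\lambda)$-driven definition of $f^\alpha$ at $\alpha\in E^\kappa_\lambda$ plays the role of $\diamondsuit^*_{E^\kappa_\lambda,B}(\kappa\textup{-trees})$ relative to the outcome tree (as in Claim~\ref{claim512}(b)) is the same computation performed in Theorem~\ref{thm51}.
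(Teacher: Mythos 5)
Your overall plan --- a single-tree variant of Theorem~\ref{thm51} in which the $\diamondsuit(\lambda)$-driven sequence $\langle f^\alpha\mid\alpha\in B\rangle$ substitutes for $\diamondsuit^*_{E^\kappa_\lambda,B}(\kappa\textup{-trees})$ --- is indeed the intended one, but the reduction to the proxy principle is where your argument has a genuine gap, and you flag it yourself without closing it. You need $\p^-(\kappa,2,{\sq},\kappa,\{E^\kappa_\lambda\})$ (or at least the $\sq_\lambda$-coherent, width-$\kappa$ version) because you keep the oracle-style sealing of Theorem~\ref{thm51}: at $\beta\in\nacc(C_\alpha)$ you only seal the node $x=\psi(\beta)$, so in the Souslin verification you must hit $\nacc(C_\alpha)\cap B_i$ cofinally for \emph{every} $i<\alpha$ simultaneously. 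That principle is not known to follow from $\square_\lambda+\ch_\lambda+\diamondsuit(\lambda)$: the available results (the ones the paper cites, \cite[Corollary~3.1]{paper22} together with \cite[Lemma~3.8]{paper32}) give only $\p(\kappa,2,{\sq^*},1,\{E^\kappa_\lambda\})$ at the critical cofinality $E^\kappa_\lambda$, i.e.\ weaker coherence and width $1$. Note also that if your claimed derivation were available, then Theorem~\ref{thm51} itself would already yield the full product theorem (Theorem~\ref{thmc}) from $\square_\lambda+\ch_\lambda+\diamondsuit(\lambda)$, whereas the paper obtains the width-$\kappa$, $\sq$-coherent principle only from the stronger $\sd_\lambda$. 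So the ``main obstacle'' you defer is not a routine amalgamation; as stated it is an open (and likely false-in-general) strengthening.

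The paper avoids this entirely by observing that for a \emph{single} tree the $\psi(\beta)$-oracle and the partition $\langle R_i\mid i<\kappa\rangle$ are unnecessary: as in Theorem~\ref{thm43}, at each suitable $\beta\in\nacc(C_\alpha)$ one seals \emph{all} nodes at once, letting each $b^\alpha_x(\beta)$ pass above an element of $\Omega_\beta$ whenever possible, so that width-$1$ guessing suffices. Concretely, the paper proves the theorem from (1), $\square^B_\lambda+\diamondsuit(\lambda)$, and ``either $\p^-(\kappa,2,{\sq_\lambda},1,\{E^\kappa_\lambda\})$ or $\p^-(\kappa,2,{\sq^*},1,\{E^\kappa_\lambda\})$'', and the latter does follow from $\square_\lambda+\ch_\lambda$ by the citations above. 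If you rework your construction this way, be aware that with $\sq^*$-coherence you lose the exact equality $C_\beta=C_\alpha\cap\beta$ on $\acc(C_\alpha)$, so the identification $L^{\eta\restriction\alpha}=\{\mathbf b^\alpha_x\mid x\in T\restriction C_\alpha\}$ at $\alpha\in E^\kappa_\lambda$ must be recovered by the tail-stabilization plus Fodor argument of Claim~\ref{claim432}, not by the matrix-coincidence argument of Claim~\ref{cl441}. With that replacement, the rest of your outline (prolific successor levels, the definition of $f^\alpha$ from $\vec h$ and the $\square_\lambda$-sequence, the fullness step, and the antichain-capturing argument) goes through as in Claims~\ref{claim511} and \ref{claim512}.
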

\begin{proof} A variation of the proof of Theorem~\ref{thm44} in the spirit of that of Theorem~\ref{thm51} yields that
a streamlined, normal, prolific full $\kappa$-Souslin tree may be constructed under the hypothesis (1) together with the following two:
\begin{itemize}
\item[($2'$)] $\square^B_\lambda$ and $\diamondsuit(\lambda)$ both hold;
\item[($3'$)] Either $\p^-(\kappa,2,{\sq_\lambda},1,\{E^\kappa_\lambda\})$ or $\p^-(\kappa,2,{\sq^*},1,\{E^\kappa_\lambda\})$ holds.
\end{itemize}
It is clear that hypothesis (2) implies ($2'$),
thus we just need to show that ($3'$) follows from $(1)$ and $(2)$.
This is indeed the case, since, by \cite[Corollary~3.1]{paper22} combined with \cite[Lemma~3.8]{paper32},
for every regular uncountable cardinal $\lambda$,
$\square_\lambda\mathrel{+}\ch_\lambda$ implies that $\p(\lambda^+,2,{\sq^*},1,\{E^{\lambda^+}_\lambda\})$ holds.
\end{proof}
By Observation~\ref{obs22}, if a full $\kappa$-Aronszajn tree exists for a successor cardinal $\kappa=\lambda^+$,
then $\lambda=\lambda^{<\lambda}$ (in particular, $\lambda$ is a regular cardinal), and $\lambda\ge\mathfrak c$.
We now arrive at Theorem~\ref{thmb}.

\begin{cor} Suppose that $\lambda=\lambda^{<\lambda}$ is a successor of an uncountable cardinal such that
$\square_\lambda\mathrel{+}\ch_\lambda$ holds. Then there exists a full $\lambda^+$-Souslin tree.
\end{cor}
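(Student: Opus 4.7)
The plan is to reduce the corollary to a direct application of Theorem~\ref{thm53}, which already delivers a streamlined, normal, prolific, full $\lambda^+$-Souslin tree from the hypotheses
\begin{enumerate}
\item[(1)] $\kappa=\lambda^+=2^\lambda$ with $\lambda$ regular uncountable;
\item[(2)] $\square_\lambda$ and $\diamondsuit(\lambda)$.
\end{enumerate}
So the entire task reduces to verifying these from what we are given: $\lambda=\lambda^{<\lambda}$, $\lambda$ is the successor of an uncountable cardinal, $\square_\lambda$, and $\ch_\lambda$.

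First I would record the trivial reductions. The equation $\lambda=\lambda^{<\lambda}$ forces $\lambda$ to be regular, since otherwise K\"onig's theorem gives $\lambda^{\cf(\lambda)}>\lambda$ with $\cf(\lambda)<\lambda$, contradicting $\lambda^{<\lambda}=\lambda$. Combined with the assumption that $\lambda$ is a successor of an uncountable cardinal, this makes $\lambda$ regular uncountable. The hypothesis $\ch_\lambda$ then yields $\lambda^+=2^\lambda$, so taking $\kappa:=\lambda^+$ secures clause~(1) of Theorem~\ref{thm53}, while $\square_\lambda$ is given outright. The only nontrivial ingredient missing is $\diamondsuit(\lambda)$.

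To obtain $\diamondsuit(\lambda)$, write $\lambda=\mu^+$ with $\mu$ uncountable. Since $\mu<\lambda$, the hypothesis $\lambda^{<\lambda}=\lambda$ gives $2^\mu\le\lambda$, and the reverse inequality $2^\mu\ge\mu^+=\lambda$ is Cantor's, so $2^\mu=\mu^+$; that is, $\ch_\mu$ holds. Now invoke Shelah's theorem from \cite{Sh:922} (already cited earlier in the paper): for every uncountable cardinal $\mu$, $\ch_\mu$ implies $\diamondsuit(\mu^+)$. This hands us $\diamondsuit(\lambda)$, completing clause~(2).

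With all the hypotheses of Theorem~\ref{thm53} verified, a single application of that theorem produces the desired full $\lambda^+$-Souslin tree. There is no real obstacle: the substantive work is done in Theorem~\ref{thm53}, and the corollary is just repackaging that result by invoking Shelah's $\ch_\mu\Rightarrow\diamondsuit(\mu^+)$ to trade the somewhat bespoke hypothesis $\diamondsuit(\lambda)$ for the familiar cardinal-arithmetic assumption $\lambda=\lambda^{<\lambda}$ together with successor-ness.
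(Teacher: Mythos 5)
Your proposal is correct and follows essentially the same route as the paper: the paper's proof likewise derives $\diamondsuit(\lambda)$ from Shelah's theorem in \cite{Sh:922} (via $\ch_\mu$ for $\lambda=\mu^+$, which is exactly the cardinal-arithmetic step you spell out) and then appeals to Theorem~\ref{thm53}. Your write-up merely makes explicit the bookkeeping ($2^\mu=\mu^+$ from $\lambda^{<\lambda}=\lambda$, and $2^\lambda=\lambda^+$ from $\ch_\lambda$) that the paper leaves implicit.
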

\begin{proof} By the main result of \cite{Sh:922}, $\diamondsuit(\lambda)$ holds.
Now, appeal to Theorem~\ref{thm53}.
\end{proof}

\section*{Acknowledgments}

The first and second author were supported by the Israel Science Foundation (grant agreement 203/22).
The first and third author were supported by the European Research Council (grant agreement ERC-2018-StG 802756).

Some of the results of this paper were announced by the first author
at the \emph{Set Theory} online workshop in Oberwolfach, January 2022,
and by the second author in a poster session at the \emph{Advances in Set Theory 2022} conference in Jerusalem, July 2022.
We thank the corresponding organizers for the opportunity to present this work and the participants for their feedback.


\begin{thebibliography}{LHR19b}

\bibitem[BR17]{paper22}
Ari~Meir Brodsky and Assaf Rinot.
\newblock A microscopic approach to {S}ouslin-tree constructions. {P}art {I}.
\newblock {\em Ann. Pure Appl. Logic}, 168(11):1949--2007, 2017.

\bibitem[BR19a]{paper26}
Ari~Meir Brodsky and Assaf Rinot.
\newblock More notions of forcing add a {S}ouslin tree.
\newblock {\em Notre Dame J. Form. Log.}, 60(3):437--455, 2019.

\bibitem[BR19b]{paper32}
Ari~Meir Brodsky and Assaf Rinot.
\newblock A remark on {S}chimmerling's question.
\newblock {\em Order}, 36(3):525--561, 2019.

\bibitem[BR21]{paper23}
Ari~Meir Brodsky and Assaf Rinot.
\newblock A microscopic approach to {S}ouslin-tree constructions. {P}art {II}.
\newblock {\em Ann. Pure Appl. Logic}, 172(5):Paper No. 102904, 65, 2021.

\bibitem[BRY24]{paper65}
Ari~Meir Brodsky, Assaf Rinot, and Shira Yadai.
\newblock Proxy principles in combinatorial set theory.
\newblock Submitted February 2024.
\newblock \verb"http://assafrinot.com/paper/65"


\bibitem[DJ74]{MR384542}
Keith~J. Devlin and H{\.a}vard Johnsbr{\.a}ten.
\newblock {\em The {S}ouslin problem}.
\newblock Lecture Notes in Mathematics, Vol. 405. Springer-Verlag, Berlin,
  1974.

\bibitem[Dow51]{Dowker_C.H.}
C.~H. Dowker.
\newblock On countably paracompact spaces.
\newblock {\em Canadian J. Math.}, 3:219--224, 1951.




\bibitem[Jec67]{jech1967non}
Tom{\'a}{\v{s}} Jech.
\newblock Non-provability of Souslin's hypothesis.
\newblock {\em Comment. Math. Univ. Carolinae}, 8(2):291--305, 1967.

\bibitem[Jen68]{jensen1968souslin}
Ronald~B Jensen.
\newblock Souslin's hypothesis is incompatible with V=L.
\newblock {\em Notices Amer. Math. Soc}, 15(6), 1968.

\bibitem[Jen72]{jen72}
R.~Bj{\"o}rn Jensen.
\newblock The fine structure of the constructible hierarchy.
\newblock {\em Ann. Math. Logic}, 4:229--308; erratum, ibid. 4 (1972), 443, 1972.

\bibitem[JK69]{jensen1969some}
Ronald Jensen and Kenneth Kunen.
\newblock Some combinatorial properties of {L} and {V}.
\newblock {\em Handwritten notes}, 1969.

\bibitem[Kan11]{MR2882649}
Akihiro Kanamori.
\newblock Historical remarks on Suslin's problem.
\newblock In {\em Set theory, arithmetic, and foundations of mathematics:
  theorems, philosophies}, volume~36 of {\em Lect. Notes Log.}, pages 1--12.
  Assoc. Symbol. Logic, La Jolla, CA, 2011.

\bibitem[Kon27]{konig1927schlussweise}
D{\'e}nes Konig.
\newblock {\"U}ber eine schlussweise aus dem endlichen ins unendliche.
\newblock {\em Acta Sci. Math.(Szeged)}, 3(2-3):121--130, 1927.


\bibitem[KS11]{MR2833150}
John Krueger and Ernest Schimmerling.
\newblock An equiconsistency result on partial squares.
\newblock {\em J. Math. Log.}, 11(1):29--59, 2011.


\bibitem[Kur35]{kurepa1935ensembles}
{\DJ}uro Kurepa.
\newblock Ensembles ordonn{\'e}s et ramifi{\'e}s.
\newblock {\em Publications de l'Institut Math\'ematique Beograd}, 4:1--138, 1935.

\bibitem[LH17]{lh_trees_squares_reflection}
Chris Lambie-Hanson.
\newblock Aronszajn trees, square principles, and stationary reflection.
\newblock {\em MLQ Math. Log. Q.}, 63(3-4):265--281, 2017.

\bibitem[LHR19a]{paper31}
Chris Lambie-Hanson and Assaf Rinot.
\newblock A forcing axiom deciding the generalized Souslin Hypothesis.
\newblock {\em Canad. J. Math.}, 71(2):437--470, 2019.

\bibitem[LHR19b]{paper28}
Chris Lambie-Hanson and Assaf Rinot.
\newblock Reflection on the coloring and chromatic numbers.
\newblock {\em Combinatorica}, 39(1):165--214, 2019.

\bibitem[LR20]{paper38}
Maxwell Levine and Assaf Rinot.
\newblock Partitioning a reflecting stationary set.
\newblock {\em Proc. Amer. Math. Soc.}, 148(8):3551--3565, 2020.


\bibitem[PR23]{paper60}
Mark Poor and Assaf Rinot.
\newblock A Shelah group in ZFC.
\newblock Submitted July 2023.
\newblock \verb"http://assafrinot.com/paper/60"

\bibitem[Rin10]{paper07}
Assaf Rinot.
\newblock A relative of the approachability ideal, diamond and non-saturation.
\newblock {\em J. Symbolic Logic}, 75(3):1035--1065, 2010.

\bibitem[Rin17]{paper24}
Assaf Rinot.
\newblock Higher {S}ouslin trees and the {GCH}, revisited.
\newblock {\em Adv. Math.}, 311(C):510--531, 2017.


\bibitem[Rud55]{Rudin_souslin_line_dowker_space}
Mary~Ellen Rudin.
\newblock Countable paracompactness and {S}ouslin's problem.
\newblock {\em Canad. J. Math.}, 7:543--547, 1955.


\bibitem[She98]{sh:f}
Saharon Shelah.
\newblock {\em {Proper and improper forcing}}.
\newblock Perspectives in Mathematical Logic. Springer-Verlag, Berlin, 2nd
  edition, 1998.

\bibitem[She99]{Sh:624}
Saharon Shelah.
\newblock On full Suslin trees.
\newblock {\em Colloq. Math.}, 79(1):1--7, 1999.

\bibitem[She10]{Sh:922}
Saharon Shelah.
\newblock {Diamonds}.
\newblock {\em Proceedings of the American Mathematical Society}, 138:2151--2161, 2010.

\bibitem[Sie33]{MR1556708}
Waclaw Sierpi{\'n}ski.
\newblock Sur un probl\`eme de la th\'eorie des relations.
\newblock {\em Ann. Scuola Norm. Sup. Pisa Cl. Sci. (2)}, 2(3):285--287, 1933.

\bibitem[Sou20]{Souslin}
Mikhail~Yakovlevich Souslin.
\newblock Probl\`eme 3.
\newblock {\em Fundamenta Mathematicae}, 1(1):223, 1920.


\bibitem[ST71]{MR0294139}
R.~M. Solovay and S.~Tennenbaum.
\newblock Iterated {C}ohen extensions and {S}ouslin's problem.
\newblock {\em Ann. of Math. (2)}, 94:201--245, 1971.

\bibitem[Ten68]{tennenbaum1968souslin}
Stanley Tennenbaum.
\newblock Souslin's problem.
\newblock {\em Proc. Nat. Acad. Sci. U.S.A.}, 59(1):60--63, 1968.

\bibitem[Tod87]{MR908147}
Stevo Todor\v{c}evi\'{c}.
\newblock Partitioning pairs of countable ordinals.
\newblock {\em Acta Math.}, 159(3-4):261--294, 1987.

\end{thebibliography}
\end{document}